\documentclass{amsart}
\usepackage[english]{babel}
\usepackage{amsmath}
\usepackage{tikz}
\usetikzlibrary{matrix}

\usepackage{hyperref}
\hypersetup{colorlinks=true, linkcolor=RubineRed, citecolor=LimeGreen,
urlcolor=SkyBlue}
\usepackage[hypcap=false]{caption}

\usepackage{amssymb,accents}
\usepackage{amsthm}
\usepackage{enumitem}
\usepackage{tikz-cd}
\usepackage[mathscr]{euscript}
\usepackage{mathtools}
\usepackage{caption}
\usepackage{subfigure}
\usepackage{thm-restate}

\usepackage{tikzsymbols}
\usepackage{textcomp}

\fboxrule0.0001pt \fboxsep0pt

\tikzset{
math to/.tip={Glyph[glyph math command=rightarrow]},
loop/.tip={Glyph[glyph math command=looparrowleft, swap]},
loop'/.tip={Glyph[glyph math command=looparrowleft]},
 weird/.tip={Glyph[glyph math command=Rrightarrow, glyph length=1.5ex]},
  pi/.tip={Glyph[glyph math command=pi, glyph length=1.5ex, glyph axis=0pt]},
}

\newcommand\loops\ell


\newtheorem*{claim}{Claim}

\newtheorem{theorem}{Theorem}[section]
\newtheorem{lemma}[theorem]{Lemma}

\newtheorem{proposition}[theorem]{Proposition}

\newtheorem*{claim*}{Claim}

\newtheorem{assumption}{Assumption}

\theoremstyle{definition}
\newtheorem{remark}[theorem]{Remark}
\newtheorem{definition}[theorem]{Definition}
\newtheorem{example}[theorem]{Example}

\newtheorem{construction}[theorem]{Construction}

\renewcommand\bar\overline

\newcommand{\rightQ}[2]{\left.\raisebox{.2em}{$#1$}\middle/\raisebox{-.2em}{$#2$}\right.}
\newcommand{\leftQ}[2]{\left.\raisebox{-.2em}{$#2$}\middle\backslash\raisebox{.2em}{$#1$}\right.}

\usepackage{lipsum}                     
\usepackage{xargs}                      

\usepackage{todonotes}
\newcommandx{\unsure}[2][1=]{\todo[linecolor=red,backgroundcolor=red!25,bordercolor=red,#1]{#2}}
\newcommandx{\change}[2][1=]{\todo[linecolor=blue,backgroundcolor=blue!25,bordercolor=blue,#1]{#2}}
\newcommandx{\info}[2][1=]{\todo[linecolor=OliveGreen,backgroundcolor=OliveGreen!25,bordercolor=OliveGreen,#1]{#2}}
\newcommandx{\improvement}[2][1=]{\todo[linecolor=Plum,backgroundcolor=Plum!25,bordercolor=Plum,#1]{#2}}
\newcommandx{\thiswillnotshow}[2][1=]{\todo[disable,#1]{#2}}

\usepackage[dvipsnames]{xcolor}

\title{Immersions of complexes of groups}
\author{Jagerynn Ting Verano}
\address{Department of Mathematics, Statistics, and Computer Science,
University of Illinois Chicago}
\email{jveran2@uic.edu}
\urladdr{http://sites.google.com/view/jagerynn/}
\begin{document}

\begin{abstract}
    Given a complex of groups, we construct a new class of complex of groups that records its local data and offer a functorial perspective on the statement that complexes of groups are locally developable. We also construct a new notion of an immersion of complexes of groups and establish that a locally isometric immersion of a complex of groups into a non-positively curved complex of groups is $\pi_1$-injective. Furthermore, the domain complex of groups is developable and the induced map on geometric realizations of developments is an isometric embedding.
\end{abstract}

\maketitle
\setcounter{tocdepth}{1}
\tableofcontents
\section{Introduction}
In \cite{Agol}, Agol proved that every hyperbolic group admitting a geometric action on a CAT(0) cube complex is virtually special. As a result, every cubulated hyperbolic group inherits desirable properties such as linearity over $\mathbb{Z}$, subgroup separability and residual finiteness \cite{HaglundFrédéric2008SCC}. In recent years, \textit{complexes of groups} have been utilized to extend these results to more general types of actions, such as hyperbolic groups acting improperly \cite{Groves_2023} and relatively geometric actions \cite{relgeom, EinsteinGrovesNg}. Already we see applications to a broader class of groups, including certain relatively hyperbolic K\"{a}hler groups \cite{BregmanCorey2024Rgao}, hyperbolic hyperbolic-by-cyclic groups \cite{DahmaniFrançois2025Hhga}, a large class of 2-dimensional Shephard groups\footnote{In short, Shephard groups are specific quotients of Artin groups.} \cite{goldman20242dimensionalshephardgroups}, and closed aspherical manifolds obtained from relative strict hyperbolization \cite{LafontRuffoni25}.

The theory of complexes of groups generalizes covering theory. Like topological spaces, a \textit{developable} complex of groups arises from a group action on a simply connected polyhedral complex. Its data allows one to construct the complex on which its \textit{fundamental group} acts. In contrast to covering theory, the group action need not be free. Hence, in addition to topological data coming from the quotient space, the fundamental group of a developable complex of groups contains cell stabilizers.

A complex of groups is not always developable. Non-developability is a global phenomenon. Locally, however, it is always developable (see \cite[III.$\mathcal{C}$.4, pp. 520, 555]{bh}). Our first goal is to interpret this statement from a functorial perspective (formal definitions are introduced in Sections \ref{sect_CoGbasics} and \ref{sect_localCoG}):

\begin{restatable}{theorem}{thmlocalCoG}\label{thm_localCoG}
    Let $G(\mathcal{Y})=(G_\sigma, \psi_a, g_{a,b})$ be a complex of groups over a scwol $\mathcal{Y}$, and let $\gamma\in V(\mathcal{Y})$. The local complex of groups $L(\mathcal{Y}(\gamma))$ over $\gamma$ has the following properties:
    \begin{enumerate}
        \item It is developable,
        \item The fundamental group of $L(\mathcal{Y}(\gamma))$ is the local group $G_\gamma$, and
        \item The development $D(\mathcal{Y}(\gamma),\iota_T)$ is the local development $\mathcal{Y}(\tilde{\gamma})$.
    \end{enumerate}
\end{restatable}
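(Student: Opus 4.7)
The plan is to prove all three parts simultaneously by exhibiting a natural \emph{simple} morphism $\Phi : L(\mathcal{Y}(\gamma)) \to G_\gamma$ (viewing $G_\gamma$ as a complex of groups over a single vertex) and then invoking the standard criterion that simple morphisms certify developability and compute the fundamental group.

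By construction (Section \ref{sect_localCoG}), the underlying scwol of $L(\mathcal{Y}(\gamma))$ is the upper star of $\gamma$ in $\mathcal{Y}$, and its local groups, injections, and twisting elements are inherited by restriction from $G(\mathcal{Y})$. I would define $\Phi$ by setting $\phi_\gamma = \mathrm{id}_{G_\gamma}$ and, for every other vertex $\sigma$ of this star, $\phi_\sigma = \psi_{a_\sigma} : G_\sigma \hookrightarrow G_\gamma$, where $a_\sigma$ is the edge from $\sigma$ to $\gamma$; the edge-elements of $\Phi$ are built out of the cocycles $g_{a,b}$ of $G(\mathcal{Y})$. The morphism identities for $\Phi$ then read off directly from the cocycle relations of $G(\mathcal{Y})$ applied to composable pairs whose composite lies in the star of $\gamma$. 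Because each $\psi_a$ is injective by the defining axioms of a complex of groups, every $\phi_\sigma$ is injective, so $\Phi$ is simple.

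Part (1) follows immediately from the standard criterion (Bridson--Haefliger III.$\mathcal{C}$.2.15): any complex of groups admitting a morphism to a group that is injective on local groups is developable. For part (2), $\Phi$ factors canonically through the fundamental group to give $\tilde{\Phi} : \pi_1(L(\mathcal{Y}(\gamma))) \to G_\gamma$, and I would construct an inverse from the natural map $G_\gamma \to \pi_1(L(\mathcal{Y}(\gamma)))$. The key geometric observation is that $\gamma$ is terminal in its own upper star, so I can choose a maximal tree $T$ of $\mathcal{Y}(\gamma)$ consisting of every edge incident to $\gamma$. In the resulting presentation of $\pi_1$, every edge generator of $T$ is killed and the remaining relations reduce to those defining $G_\gamma$ itself. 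For part (3), the development $D(\mathcal{Y}(\gamma),\iota_T)$ is by definition the quotient of $\pi_1(L(\mathcal{Y}(\gamma))) \times |\mathcal{Y}(\gamma)|$ by an equivalence relation read off from the local-group embeddings; under the identification of part (2), this quotient agrees stratum-by-stratum (vertex set $G_\gamma / \psi_{a_\sigma}(G_\sigma)$ over each $\sigma$) with the definition of the local development $\mathcal{Y}(\tilde{\gamma})$ used earlier.

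The main obstacle I anticipate lies in part (2): verifying cleanly that the cocycle relations $g_{a,b}$ in $L(\mathcal{Y}(\gamma))$, once pushed through $\Phi$ and combined with the trivialization of $T$, do not impose any relation on $G_\gamma$ beyond those already present there. In other words, one must write out the standard presentation of $\pi_1(L(\mathcal{Y}(\gamma)))$ term by term and check that the composed edge-paths corresponding to $g_{a,b}$ collapse to identities of $G_\gamma$; the choice of maximal tree $T$ above is designed precisely to make this collapse automatic. Once part (2) is settled, part (3) becomes essentially a translation between two concrete descriptions of the same quotient.
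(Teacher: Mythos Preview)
Your approach is essentially the paper's: the morphism $\Phi$ you describe is the paper's $\Theta$ in Proposition~\ref{prop_pi1localCoG}, the maximal tree $T$ is the same choice of all edges incident to $\gamma$, and parts (1)--(3) are deduced from $\Phi$ in exactly the order you outline (Propositions~\ref{prop_localdevelopability} and~\ref{prop_development_localCoG}). One correction to your description of the object: $\mathcal{Y}(\gamma)$ is the full join $\mathrm{Lk}^\gamma * \{\gamma\} * \mathrm{Lk}_\gamma$ (Definition~\ref{def_Y(gamma)}), not only the upper star, and on lower-link vertices $b\in V(\mathrm{Lk}^\gamma)$ the local group of $L(\mathcal{Y}(\gamma))$ is $G_\gamma$ itself with identity injections and trivial twists---not the restriction of $G(\mathcal{Y})$---so $\gamma$ is not terminal and your $\Phi_b$ should be $\mathrm{id}_{G_\gamma}$ rather than a $\psi$-map; with that adjustment the lower-link contribution to the presentation in part~(2) is visibly trivial and everything goes through as you describe.
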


In Construction \ref{construction_morphismfunctorial}, we describe a morphism $\Sigma:L(\mathcal{Y}(\gamma))\to G(\mathcal{Y})$ for each $\gamma\in V(\mathcal{Y})$.
We utilize this construction to obtain a functorial version of the developability criterion (see \cite[Corollary III.$\mathcal{C}$.2.15]{bh} for this criterion).
\begin{restatable}{corollary}{corthmlocalCoG}\label{cor:thm_localCoG}
Let $G(\mathcal{Y})$ be a complex of groups over a scwol $\mathcal{Y}$. Then $G(\mathcal{Y})$ is developable if and only if for all $\gamma\in V(\mathcal{Y})$, $$\Sigma:L(\mathcal{Y}(\gamma))\longrightarrow G(\mathcal{Y})$$ as defined in Construction \ref{construction_morphismfunctorial} is $\pi_1-$injective.
\end{restatable}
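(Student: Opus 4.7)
The plan is to deduce the corollary from the classical developability criterion of Bridson--Haefliger (III.$\mathcal{C}$.2.15), which asserts that $G(\mathcal{Y})$ is developable if and only if, for every $\gamma \in V(\mathcal{Y})$, the canonical homomorphism $\iota_\gamma \colon G_\gamma \to \pi_1(G(\mathcal{Y}),T)$ is injective. Since this injectivity is independent of the choice of maximal tree $T$, I would fix any convenient $T$ at the outset.

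By functoriality of $\pi_1$, the morphism $\Sigma \colon L(\mathcal{Y}(\gamma)) \to G(\mathcal{Y})$ produced in Construction \ref{construction_morphismfunctorial} induces a group homomorphism $\pi_1(\Sigma) \colon \pi_1(L(\mathcal{Y}(\gamma))) \to \pi_1(G(\mathcal{Y}))$. Using Theorem \ref{thm_localCoG}(2), the domain is canonically identified with the local group $G_\gamma$, so after this identification the induced map has the form $\pi_1(\Sigma) \colon G_\gamma \to \pi_1(G(\mathcal{Y}))$.

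The central technical step is to verify that, under the identification provided by Theorem \ref{thm_localCoG}(2), the induced homomorphism $\pi_1(\Sigma)$ agrees with the canonical homomorphism $\iota_\gamma$ from the Bridson--Haefliger criterion. To do this, I would take an element $g \in G_\gamma$, represent it as the homotopy class of a $G$-loop in $L(\mathcal{Y}(\gamma))$ based at $\gamma$ (which, under the identification of (2), is the constant loop decorated by $g$), and then apply $\Sigma$ edge-by-edge and group-element-by-group-element using the explicit formulas of Construction \ref{construction_morphismfunctorial}. The expected outcome is that $\Sigma$ sends this representative to the constant $\mathcal{Y}$-loop at $\gamma$ decorated by the same element $g \in G_\gamma$, which is the standard representative of $\iota_\gamma(g)$ in $\pi_1(G(\mathcal{Y}),T)$.

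Once this identification is in place the corollary is immediate: $\pi_1(\Sigma)$ is injective for every $\gamma \in V(\mathcal{Y})$ if and only if $\iota_\gamma$ is injective for every $\gamma$, which by the Bridson--Haefliger criterion is equivalent to developability of $G(\mathcal{Y})$. The main obstacle is the identification step, since it requires a careful bookkeeping of Construction \ref{construction_morphismfunctorial}, the scwol structure of $\mathcal{Y}(\gamma)$, and the presentation of $\pi_1$ of a complex of groups in terms of edges and twisting elements; once the construction is unpacked, the verification reduces to checking generators.
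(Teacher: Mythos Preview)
Your proposal is correct and follows essentially the same route as the paper's proof. The paper packages the key identification step into a commutative diagram relating $(\iota_T)_\gamma$, $\Sigma_\gamma$, $(\iota_{\bar T})_\gamma$, and $\Sigma_*$, then uses that $\Sigma_\gamma=\mathrm{id}_{G_\gamma}$ (immediate from Construction~\ref{construction_morphismfunctorial}) together with the isomorphism of Theorem~\ref{thm_localCoG}(2) to conclude that $\Sigma_*$ is injective if and only if $(\iota_{\bar T})_\gamma$ is; your plan to trace a generator $g\in G_\gamma$ through the constant $G$-loop is exactly this observation in narrative form.
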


Our second goal is to develop a new notion of an \textit{immersion of complexes of groups}. In \cite[Definition 3.1]{amar}, Martin defines an immersion of complexes of groups as a morphism that is injective on local groups and whose underlying map is a simplicial immersion. By Martin's definition, most coverings of complexes of groups are not immersions.
In Definition \ref{def_immersion}, we introduce a more general notion of an immersion. 

We apply our notion of a locally isometric immersion to prove a corollary of the Cartan--Hadamard theorem for complexes of groups \cite[Theorem III.$\mathcal{C}$.4.17]{bh}. Specifically, \cite[Theorem III.$\mathcal{C}$.4.17]{bh} states that every \textit{non-positively curved complex of groups} is developable (see \cite[Theorem III.$\mathcal{C}$.4.17]{bh}).  Our theorem takes inspiration from \cite[Proposition II.4.14]{bh} and generalizes this local-to-global proposition from metric spaces to complexes of groups.
\begin{restatable}{theorem}{thmimmersion}
\label{thm_immersion}
Let $H(\mathcal{Y})$ and $G(\mathcal{X})$ be complexes of groups over connected scwols $\mathcal{Y}$ and $\mathcal{X}$ respectively.
If $G(\mathcal{X})$ is non-positively curved and $\phi:H(\mathcal{Y}) \to G(\mathcal{X})$ is a locally isometric immersion over a morphism of scwols $f: \mathcal{Y} \to \mathcal{X}$, then $H(\mathcal{Y})$ is also non-positively curved and hence developable. Moreover,
\begin{enumerate}
    \item \label{pi1} The induced map on fundamental groups is $\pi_1-$injective, and
    \item The elevation of geometric realizations is an isometric embedding.
\end{enumerate}  
\end{restatable}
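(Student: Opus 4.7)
The plan is to port Bridson--Haefliger's local-to-global principle \cite[Proposition II.4.14]{bh} from length spaces to complexes of groups, using the functorial description of local developments from Theorem \ref{thm_localCoG}. The argument naturally divides into three stages: transferring non-positive curvature from $G(\mathcal{X})$ to $H(\mathcal{Y})$, constructing a $\phi_*$-equivariant elevation $\tilde{\phi}$ of $\phi$ between the two developments and showing it is a local isometry, and combining these to deduce items (\ref{pi1}) and (2).

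For the first stage, by Theorem \ref{thm_localCoG} the local development of $H(\mathcal{Y})$ at each $\sigma\in V(\mathcal{Y})$ is realized by $\mathcal{Y}(\tilde{\sigma})$, and likewise at $f(\sigma)$ for $G(\mathcal{X})$. Unwinding Definition \ref{def_immersion} should produce at each such $\sigma$ a local isometry of local developments $|\mathcal{Y}(\tilde{\sigma})| \to |\mathcal{X}(\widetilde{f(\sigma)})|$ into a CAT(0) target. Because local developments are cones and hence simply connected and complete, \cite[Proposition II.4.14]{bh} upgrades each such local isometry to an isometric embedding onto a convex subset of a CAT(0) space, so each local development of $H(\mathcal{Y})$ is itself CAT(0). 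Hence $H(\mathcal{Y})$ is non-positively curved, and the Cartan--Hadamard theorem \cite[Theorem III.$\mathcal{C}$.4.17]{bh} gives developability; the development $D(H(\mathcal{Y}))$ is then a complete, simply connected CAT(0) space.

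Next, $\phi$ functorially induces a homomorphism $\phi_*:\pi_1(H(\mathcal{Y}))\to\pi_1(G(\mathcal{X}))$ together with a $\phi_*$-equivariant elevation $\tilde{\phi}: D(H(\mathcal{Y}))\to D(G(\mathcal{X}))$. On a neighborhood of each vertex of $D(H(\mathcal{Y}))$, identified canonically with a local development of $H(\mathcal{Y})$, the map $\tilde{\phi}$ agrees with the isometric immersion on local developments from the previous step. Thus $\tilde{\phi}$ is a local isometry from a complete simply connected length space to the CAT(0) space $D(G(\mathcal{X}))$, and a second application of \cite[Proposition II.4.14]{bh} gives that $\tilde{\phi}$ is an isometric embedding, proving (2).

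For (\ref{pi1}), suppose $h\in\ker\phi_*$. Equivariance yields $\tilde{\phi}(h\cdot x)=\tilde{\phi}(x)$ for all $x\in D(H(\mathcal{Y}))$, and injectivity of $\tilde{\phi}$ forces $h$ to fix every point of the development; in particular $h$ stabilizes a chosen basepoint and so lies in the corresponding local group $G_\sigma \le \pi_1(H(\mathcal{Y}))$. Since an immersion is injective on local groups (part of Definition \ref{def_immersion}), the restriction of $\phi_*$ to $G_\sigma$ is injective, forcing $h=1$. The principal obstacle I anticipate is setting up the elevation $\tilde{\phi}$ rigorously and verifying it is a local isometry: this requires carefully matching the author's notion of locally isometric immersion with the identification of local developments as neighborhoods of vertices in the global development, and confirming $\phi_*$-equivariance. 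Once this bookkeeping is in place, the remainder is a standard application of CAT(0) local-to-global geometry.
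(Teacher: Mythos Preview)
Your proposal is correct and follows essentially the same route as the paper's proof: both transfer non-positive curvature to $H(\mathcal{Y})$ via the isometric embeddings of local developments, invoke \cite[Theorem III.$\mathcal{C}$.4.17]{bh} for developability, identify $|\Phi|$ as a local isometry between simply connected developments, apply \cite[Proposition II.4.14]{bh} to upgrade it to an isometric embedding, and then use equivariance plus local injectivity to kill the kernel of $\phi_*$. The only minor redundancy is your first invocation of \cite[Proposition II.4.14]{bh} at the level of local developments: the paper observes that Definition \ref{def_immersion}(1) (embedding) together with the locally isometric hypothesis (isometric image) already gives that each $|\Phi_\sigma|$ is an isometric embedding outright, so no local-to-global step is needed there.
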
 

Locally isometric immersions arise naturally from group actions on CAT(0) cube complexes.
Let $X$ be a CAT(0) cube complex and let $W\subset X$ be a hyperplane. The action of a group $G$ on $X$ induces an immersion of complexes of groups over the map of quotient complexes $$\leftQ{W}{\text{Stab}(W)}\longrightarrow \leftQ{X}{G}.$$ Since $X$ is CAT(0), the complex of groups associated to the action of $G$ on $X$ is non-positively curved. By the convexity of $W$ in $X$, the hyperplane complex of groups inherits the non-positively curved metric of $X$ and is developable. Finally, the induced map on fundamental groups is the natural inclusion $\text{Stab}(W)\hookrightarrow G$.
Theorem \ref{thm_immersion} allows us to make this conclusion from an arbitrary locally isometric immersion into a non-positively curved complex of groups.

\section{Complexes of groups} 
\label{sect_CoGbasics}
In this section we collect the necessary terminology and facts about complexes of groups that we need for the rest of the paper. We begin by introducing the structure underlying a complex of groups.

\subsection{Small categories without loops}
The following definitions can be found in \cite[III.$\mathcal{C}$.1]{bh} in greater detail. 
\begin{definition}
\label{scwol}
    A \textit{small category without loops} (or a \textit{scwol}), $\mathcal{Y}$, is a category with objects $V(\mathcal{Y})$ and non-identity morphisms $E(\mathcal{Y})$, such that every object $\sigma\in V(\mathcal{Y})$,
    $$\text{Mor}(\sigma,\sigma)=\{\text{id}_\sigma\}.$$
    If $a\in \text{Mor}(\sigma,\tau)$, we say that
    $i(a)=\sigma$ and $t(a)=\tau.$ We denote by $E^{(k)}(\mathcal{Y})$ the collection of tuples of morphisms $(a_1,...,a_k)\in E(\mathcal{Y})\times ...\times E(\mathcal{Y})$, where $t(a_{j})=i(a_{j+1})$ for $j\in \{1,2,...,k-1\}$.
\end{definition}

\begin{definition}
    Let $\mathcal{Y}$ be a scwol and let each $(a_1,a_2,...,a_k)\in E^{(k)}(\mathcal{Y})$ correspond to a $(k+1)-$simplex with edges indexed $|a_i|$ and $|a_ia_{i+1}|$ by morphisms and their compositions, such that $$i(|a_ia_{i+1}|)=i(|a_{i+1}|)=|i(a_{i+1})| \text{ and }t(|a_ia_{i+1}|)=t(|a_i|)=|t(a_i)|.$$
    The \textit{geometric realization of a scwol $\mathcal{Y}$}, denoted by $|\mathcal{Y}|$, is the quotient of all such simplices by relations in $\mathcal{Y}$. In particular, it is a polyhedral complex with vertices indexed by $V(\mathcal{Y})$ and morphisms indexed by $E(\mathcal{Y})$.
\end{definition}

Just as a scwol gives rise to a polyhedral complex via its geometric realization, a polyhedral complex gives rise to a scwol in a natural way: its objects correspond faces of the polyhedral complex and its morphisms are defined by reverse inclusions of faces. More precisely, for every relation $\tau\supset \sigma$ between two faces, there exists a morphism $\tau\to \sigma$. The next example illustrates this concept.

\begin{example}
The scwol naturally associated to a 2-simplex is given by:
\begin{center}
\tikzset{every picture/.style={line width=0.75pt}} 

\begin{tikzpicture}[x=0.7pt,y=0.7pt,yscale=-.7,xscale=.7]

\draw  [fill={rgb, 255:red, 230; green, 225; blue, 225 }  ,fill opacity=1 ] (288.5,48.5) -- (383,197.83) -- (194,197.83) -- cycle ;
\draw [fill={rgb, 255:red, 0; green, 0; blue, 0 }  ,fill opacity=1 ]   (288.5,137.95) -- (288.5,195.83) ;
\draw [shift={(288.5,197.83)}, rotate = 270] [color={rgb, 255:red, 0; green, 0; blue, 0 }  ][line width=0.75]    (10.93,-3.29) .. controls (6.95,-1.4) and (3.31,-0.3) .. (0,0) .. controls (3.31,0.3) and (6.95,1.4) .. (10.93,3.29)   ;
\draw [shift={(288.5,137.95)}, rotate = 90] [color={rgb, 255:red, 0; green, 0; blue, 0 }  ][fill={rgb, 255:red, 0; green, 0; blue, 0 }  ][line width=0.75]      (0, 0) circle [x radius= 3.35, y radius= 3.35]   ;
\draw    (288.5,137.95) -- (243.16,123.76) ;
\draw [shift={(241.25,123.17)}, rotate = 17.37] [color={rgb, 255:red, 0; green, 0; blue, 0 }  ][line width=0.75]    (10.93,-3.29) .. controls (6.95,-1.4) and (3.31,-0.3) .. (0,0) .. controls (3.31,0.3) and (6.95,1.4) .. (10.93,3.29)   ;
\draw    (288.5,137.95) -- (333.84,123.76) ;
\draw [shift={(335.75,123.17)}, rotate = 162.63] [color={rgb, 255:red, 0; green, 0; blue, 0 }  ][line width=0.75]    (10.93,-3.29) .. controls (6.95,-1.4) and (3.31,-0.3) .. (0,0) .. controls (3.31,0.3) and (6.95,1.4) .. (10.93,3.29)   ;
\draw    (288.5,137.95) -- (288.5,50.5) ;
\draw [shift={(288.5,48.5)}, rotate = 90] [color={rgb, 255:red, 0; green, 0; blue, 0 }  ][line width=0.75]    (10.93,-3.29) .. controls (6.95,-1.4) and (3.31,-0.3) .. (0,0) .. controls (3.31,0.3) and (6.95,1.4) .. (10.93,3.29)   ;
\draw    (288.5,137.95) -- (381.31,196.76) ;
\draw [shift={(383,197.83)}, rotate = 212.36] [color={rgb, 255:red, 0; green, 0; blue, 0 }  ][line width=0.75]    (10.93,-3.29) .. controls (6.95,-1.4) and (3.31,-0.3) .. (0,0) .. controls (3.31,0.3) and (6.95,1.4) .. (10.93,3.29)   ;
\draw    (288.5,48.5) -- (195.07,196.14) ;
\draw [shift={(194,197.83)}, rotate = 302.33] [color={rgb, 255:red, 0; green, 0; blue, 0 }  ][line width=0.75]    (10.93,-3.29) .. controls (6.95,-1.4) and (3.31,-0.3) .. (0,0) .. controls (3.31,0.3) and (6.95,1.4) .. (10.93,3.29)   ;
\draw [shift={(241.25,123.17)}, rotate = 122.33] [color={rgb, 255:red, 0; green, 0; blue, 0 }  ][fill={rgb, 255:red, 0; green, 0; blue, 0 }  ][line width=0.75]      (0, 0) circle [x radius= 3.35, y radius= 3.35]   ;
\draw [shift={(288.5,48.5)}, rotate = 122.33] [color={rgb, 255:red, 0; green, 0; blue, 0 }  ][fill={rgb, 255:red, 0; green, 0; blue, 0 }  ][line width=0.75]      (0, 0) circle [x radius= 3.35, y radius= 3.35]   ;
\draw    (288.5,48.5) -- (381.93,196.14) ;
\draw [shift={(383,197.83)}, rotate = 237.67] [color={rgb, 255:red, 0; green, 0; blue, 0 }  ][line width=0.75]    (10.93,-3.29) .. controls (6.95,-1.4) and (3.31,-0.3) .. (0,0) .. controls (3.31,0.3) and (6.95,1.4) .. (10.93,3.29)   ;
\draw [shift={(335.75,123.17)}, rotate = 57.67] [color={rgb, 255:red, 0; green, 0; blue, 0 }  ][fill={rgb, 255:red, 0; green, 0; blue, 0 }  ][line width=0.75]      (0, 0) circle [x radius= 3.35, y radius= 3.35]   ;
\draw [shift={(288.5,48.5)}, rotate = 57.67] [color={rgb, 255:red, 0; green, 0; blue, 0 }  ][fill={rgb, 255:red, 0; green, 0; blue, 0 }  ][line width=0.75]      (0, 0) circle [x radius= 3.35, y radius= 3.35]   ;
\draw    (383,197.83) -- (196,197.83) ;
\draw [shift={(194,197.83)}, rotate = 360] [color={rgb, 255:red, 0; green, 0; blue, 0 }  ][line width=0.75]    (10.93,-3.29) .. controls (6.95,-1.4) and (3.31,-0.3) .. (0,0) .. controls (3.31,0.3) and (6.95,1.4) .. (10.93,3.29)   ;
\draw [shift={(288.5,197.83)}, rotate = 180] [color={rgb, 255:red, 0; green, 0; blue, 0 }  ][fill={rgb, 255:red, 0; green, 0; blue, 0 }  ][line width=0.75]      (0, 0) circle [x radius= 3.35, y radius= 3.35]   ;
\draw [shift={(383,197.83)}, rotate = 180] [color={rgb, 255:red, 0; green, 0; blue, 0 }  ][fill={rgb, 255:red, 0; green, 0; blue, 0 }  ][line width=0.75]      (0, 0) circle [x radius= 3.35, y radius= 3.35]   ;
\draw    (194,197.83) -- (381,197.83) ;
\draw [shift={(383,197.83)}, rotate = 180] [color={rgb, 255:red, 0; green, 0; blue, 0 }  ][line width=0.75]    (10.93,-3.29) .. controls (6.95,-1.4) and (3.31,-0.3) .. (0,0) .. controls (3.31,0.3) and (6.95,1.4) .. (10.93,3.29)   ;
\draw [shift={(288.5,197.83)}, rotate = 0] [color={rgb, 255:red, 0; green, 0; blue, 0 }  ][fill={rgb, 255:red, 0; green, 0; blue, 0 }  ][line width=0.75]      (0, 0) circle [x radius= 3.35, y radius= 3.35]   ;
\draw [shift={(194,197.83)}, rotate = 0] [color={rgb, 255:red, 0; green, 0; blue, 0 }  ][fill={rgb, 255:red, 0; green, 0; blue, 0 }  ][line width=0.75]      (0, 0) circle [x radius= 3.35, y radius= 3.35]   ;
\draw    (383,197.83) -- (289.57,50.19) ;
\draw [shift={(288.5,48.5)}, rotate = 57.67] [color={rgb, 255:red, 0; green, 0; blue, 0 }  ][line width=0.75]    (10.93,-3.29) .. controls (6.95,-1.4) and (3.31,-0.3) .. (0,0) .. controls (3.31,0.3) and (6.95,1.4) .. (10.93,3.29)   ;
\draw [shift={(335.75,123.17)}, rotate = 237.67] [color={rgb, 255:red, 0; green, 0; blue, 0 }  ][fill={rgb, 255:red, 0; green, 0; blue, 0 }  ][line width=0.75]      (0, 0) circle [x radius= 3.35, y radius= 3.35]   ;
\draw [shift={(383,197.83)}, rotate = 237.67] [color={rgb, 255:red, 0; green, 0; blue, 0 }  ][fill={rgb, 255:red, 0; green, 0; blue, 0 }  ][line width=0.75]      (0, 0) circle [x radius= 3.35, y radius= 3.35]   ;
\draw [fill={rgb, 255:red, 155; green, 155; blue, 155 }  ,fill opacity=1 ]   (194,197.83) -- (287.43,50.19) ;
\draw [shift={(288.5,48.5)}, rotate = 122.33] [color={rgb, 255:red, 0; green, 0; blue, 0 }  ][line width=0.75]    (10.93,-3.29) .. controls (6.95,-1.4) and (3.31,-0.3) .. (0,0) .. controls (3.31,0.3) and (6.95,1.4) .. (10.93,3.29)   ;
\draw [shift={(241.25,123.17)}, rotate = 302.33] [color={rgb, 255:red, 0; green, 0; blue, 0 }  ][fill={rgb, 255:red, 0; green, 0; blue, 0 }  ][line width=0.75]      (0, 0) circle [x radius= 3.35, y radius= 3.35]   ;
\draw [shift={(194,197.83)}, rotate = 302.33] [color={rgb, 255:red, 0; green, 0; blue, 0 }  ][fill={rgb, 255:red, 0; green, 0; blue, 0 }  ][line width=0.75]      (0, 0) circle [x radius= 3.35, y radius= 3.35]   ;
\draw    (288.5,137.95) -- (195.69,196.76) ;
\draw [shift={(194,197.83)}, rotate = 327.64] [color={rgb, 255:red, 0; green, 0; blue, 0 }  ][line width=0.75]    (10.93,-3.29) .. controls (6.95,-1.4) and (3.31,-0.3) .. (0,0) .. controls (3.31,0.3) and (6.95,1.4) .. (10.93,3.29)   ;

\end{tikzpicture}
    \end{center}
\end{example}

\begin{definition}\label{def_morphism_scwols}
Let $f:\mathcal{Y}\to \mathcal{X}$ be a relation between scwols such that for all $\sigma\in V(\mathcal{Y})$, there is a bijection of the set $$\{a\in E(\mathcal{Y})|i(a)=\sigma\}$$ onto the set $$\{\bar{a}\in E(\mathcal{X})|i(\bar{a})=f(\sigma)\}.$$ We refer to $f$ as a \textit{(non-degenerate) morphism of scwols}.
\end{definition}

\begin{definition}
    The \textit{action of a group $G$ on a scwol $\mathcal{X}$} is a homomorphism $G\to \text{Aut}(\mathcal{X})$ such that:
    \begin{enumerate}
        \item If $g\in \text{Stab}(i(a))$, then $g\in \text{Stab}_G(a)$, and
        \item There are no inversions, i.e. $g.i(a)\neq t(a)$.
    \end{enumerate}
\end{definition}

A morphism $f:\mathcal{Y}\to \mathcal{X}$ of scwols induces a map $|f|:|\mathcal{Y}|\to |\mathcal{X}|$ on geometric realizations. We refer to $|f|$ as the \textit{geometric realization} of $f$. The restriction of $|f|$ to each simplex of $|\mathcal{Y}|$ is affine. Non-degeneracy of $f$ implies that $|f|$ is a homeomorphism on the interiors of simplices.

\begin{definition}\label{def_CoG}
A \textit{complex of groups $G(\mathcal{Y})=(G_\sigma, \psi_a, g_{a,b})$ over a scwol $\mathcal{Y}$} is a set of data comprising
\begin{enumerate}
\item A \textit{local group} $G_\sigma$ for each $\sigma\in V(\mathcal{Y})$,
\item An injective homomorphism $\psi_a:G_{i(a)}\to G_{t(a)}$ for each $a\in E(\mathcal{Y})$, and
\item A \textit{twisting element} $g_{a,b}\in G_{t(a)}$ for each $(a,b)\in E^{(2)}(\mathcal{Y})$, satisfying the following compatibility conditions:\begin{enumerate}
    \item $\text{Ad}(g_{a,b})\psi_{ab}=\psi_a \psi_b$\footnote{We define $\text{Ad}:G\to G$ as $\text{Ad}(g):h\mapsto ghg^{-1}, g\in G$.}, and
    \item $\psi_a(g_{b,c})g_{a,bc}=g_{a,b}g_{ab,c}$ for all $ (a,b,c)\in E^{(3)}(\mathcal{Y})$.
\end{enumerate}
\end{enumerate}
\end{definition}

\begin{definition}\label{morphism_CoG}
    A \textit{morphism $\phi: H(\mathcal{Y})\to G(\mathcal{X})$ of complexes of groups} over a morphism $f:\mathcal{Y}\to \mathcal{X}$ of scwols consists of a \textit{local homomorphism} $\phi_\sigma:H_\sigma\to G_{f(\sigma)}$ for each $\sigma\in V(\mathcal{Y})$ and an element $\phi(a)\in G_{t(f(a))}$ for each $a\in E(\mathcal{Y})$, such that
    \begin{enumerate}
        \item $\text{Ad}(\phi(a))\psi_{f(a)}\phi_{i(a)}=\phi_{t(a)}\psi_a,$ and
        \item $\phi_{t(a)}(g_{a,b})\phi(ab)=\phi(a)\psi_{f(a)}(\phi(b))g_{f(a), f(b)}, $ for all $ (a,b)\in E^{(2)}(\mathcal{Y})$.
    \end{enumerate}
\end{definition}

\begin{construction}\label{coboundary} 
    Let $H(\mathcal{Y})=(H_\sigma, \psi_a, h_{a,b})$ be a complex of groups over $\mathcal{Y}$. For each $a\in E(\mathcal{Y})$, pick an element $g_a$ such that $$\phi:=(g_a): H(\mathcal{Y})\longrightarrow G(\mathcal{Y})$$ 
    is an isomorphism of complexes of groups defined by $\phi_\sigma= \text{id}_{G_\sigma}$ and $\phi(a)= {g_a}^{-1}$. A quick check shows that $$G(\mathcal{Y})=(G_\sigma, \text{Ad}({g_a}^{-1}) \psi_a, \text{Ad}({g_a}^{-1}) \psi_a({g_b}^{-1}) g_{a,b})$$ is deduced from $H(\mathcal{Y})$ by a coboundary\footnote{This terminology is introduced in \cite[III.$\mathcal{C}$, p.535]{bh}.} of $\phi=(g_a)$.
\end{construction}

\begin{definition}\label{def_morphismtoG}
    A \textit{morphism $\phi: H(\mathcal{Y})\to G$ from a complex of groups to a group $G$}
    consists of a \textit{local homomorphism} $\phi_\sigma:H_\sigma\to G$ for each $\sigma\in V(\mathcal{Y})$ and an element $\phi(a)\in G$ for each $a\in E(\mathcal{Y})$ such that
    \begin{enumerate}
        \item $\text{Ad}(\phi(a))\phi_{i(a)}=\phi_{t(a)}\psi_a$, and
        \item $\phi_{t(a)}(h_{a,b})\phi(ab)=\phi(a)\phi(b),$ where $(a,b)\in E^{(2)}(\mathcal{Y})$ and $h_{a,b}$ is the twisting element associated to $(a,b)$.
    \end{enumerate}
\end{definition}

\subsection{The fundamental group of a complex of groups}\label{fundamentalgroup}
The following definition comes from \cite[Theorem III.$\mathcal{C}$.3.7]{bh}.
\begin{definition}\label{def_pi1_presentation}
    Let $G(\mathcal{Y})$ be a complex of groups over a scwol $\mathcal{Y}$ and let $T$ be a choice of maximal tree in $|\mathcal{Y}|^{(1)}$. The \textit{fundamental group} $\pi_1(G(\mathcal{Y}),T)$ is generated by the set
    $$\left\{\bigsqcup_{\sigma\in V(\mathcal{Y})} G_\sigma \bigsqcup E^{\pm}(\mathcal{Y})\right\},$$
    subject to the relations
    
    $$\left\{\begin{aligned} 
    &\text{the set of relations in } G_\sigma &\forall\sigma\in V(\mathcal{Y}),\\
    &(a^{\pm})^{-1}=a^{\mp} &\forall a\in E(\mathcal{Y}),\\
    &a^+b^+= g_{a,b}(ab)^+ &\forall (a,b)\in E^{(2)}(\mathcal{Y}),\\
    &\psi_a(g)=a^+ga^- &\forall g\in G_{i(a)},\\
    &a^{\pm}=1 &\forall |a|\in T.
    \end{aligned}
    \right\}.$$
\end{definition}

\begin{remark}\label{remark_naturalmorphismtopi1}
    By \cite[III.$\mathcal{C}$, p. 553]{bh}, there exists a natural morphism $$\iota_T:G(\mathcal{Y})\longrightarrow \pi_1(G(\mathcal{Y}),T)$$ defined by sending the local group $G_\sigma$ to its natural image given by the above presentation and $a\in E(\mathcal{Y})$ to $a^+\in \pi_1(G(\mathcal{Y}),T)$.
\end{remark}

Like maps between topological spaces, morphisms of complexes of groups induce homomorphisms on the level of fundamental groups. The following proposition describes this using an alternative definition of the fundamental group \cite[Definition III.$\mathcal{C}$.3.5]{bh}. We denote this by $\pi_1(G(\mathcal{Y}),\sigma)$ for a complex of groups $G(\mathcal{Y})$ and $\sigma\in V(\mathcal{Y})$. The equivalence of $\pi_1(G(\mathcal{Y}),\sigma)$ and $\pi_1(G(\mathcal{Y}),T)$ is stated in \cite[Theorem III.$\mathcal{C}$.3.7]{bh}.
\begin{proposition}\cite[Proposition III.$\mathcal{C}$.3.6]{bh}\label{prop_pi1induced}
    A morphism $\phi:H(\mathcal{Y})\to G(\mathcal{X})$ of complexes of groups over a morphism $f:\mathcal{Y}\to \mathcal{X}$ of scwols induces a homomorphism
    $$\phi_*: \pi_1(H(\mathcal{Y}),\sigma)\longrightarrow \pi_1(G(\mathcal{X}),f(\sigma))$$ on the level of fundamental groups, described by
    \begin{align*}
    h&\longmapsto \phi_\sigma(h) &h\in H_\sigma, \sigma\in V(\mathcal{Y})\\
    a^+&\longmapsto \phi(a)f(a)^+ &a\in E(\mathcal{Y})
    \end{align*}
\end{proposition}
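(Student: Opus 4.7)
The plan is to define the candidate map on the generators in the presentation of Definition \ref{def_pi1_presentation}, verify each defining relation, and then transport the result to the basepointed version via the equivalence $\pi_1(G(\mathcal{Y}),T)\cong\pi_1(G(\mathcal{Y}),\sigma)$ recorded in \cite[Theorem III.$\mathcal{C}$.3.7]{bh}. Concretely, I would pick a maximal tree $T$ of $|\mathcal{Y}|^{(1)}$ and a maximal tree $T'$ of $|\mathcal{X}|^{(1)}$ containing the image $|f|(T)$, send $h\in H_\sigma$ to $\phi_\sigma(h)$, send $a^+$ to $\phi(a)\,f(a)^+$, and let $a^-\mapsto f(a)^-\phi(a)^{-1}$ be forced. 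The local-group relations are preserved because each $\phi_\sigma$ is a homomorphism, and $(a^+)^{-1}=a^-$ holds by construction.

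The two substantive checks are the edge and cocycle relations. For the edge relation $\psi_a(g)=a^+ g\, a^-$, applying the candidate map and axiom (1) of Definition \ref{morphism_CoG} gives
$$\phi_{t(a)}(\psi_a(g)) \;=\; \text{Ad}(\phi(a))\,\psi_{f(a)}(\phi_{i(a)}(g)) \;=\; \phi(a)\,f(a)^+\,\phi_{i(a)}(g)\,f(a)^-\,\phi(a)^{-1},$$
which is exactly the image of $a^+ g\, a^-$ after applying the edge relation in $\pi_1(G(\mathcal{X}),T')$. For the cocycle relation $a^+b^+=g_{a,b}(ab)^+$, I would expand both sides in $\pi_1(G(\mathcal{X}),T')$ using the target cocycle relation $f(a)^+f(b)^+=g_{f(a),f(b)}f(ab)^+$ and rearrange with axiom (2) of Definition \ref{morphism_CoG}, namely $\phi_{t(a)}(g_{a,b})\phi(ab)=\phi(a)\psi_{f(a)}(\phi(b))g_{f(a),f(b)}$. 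Finally, the tree relations $a^\pm=1$ for $|a|\in T$ collapse in the target because $|f|(T)\subset T'$.

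The main obstacle I expect is the bookkeeping around the trees. On the one hand, $|f|(T)$ need not itself be a tree, since image edges may be identified and image vertices may coincide, so one must genuinely extend it to a spanning tree $T'$ of $|\mathcal{X}|^{(1)}$; on the other, the induced homomorphism on tree-dependent presentations a priori depends on these choices, while the statement is formulated intrinsically with $\pi_1(\cdot,\sigma)$. The cleanest resolution is to argue directly with $G$-paths in the sense of \cite[Definition III.$\mathcal{C}$.3.5]{bh}: define $\phi_*$ on a $G$-path $(g_0,e_1^{\epsilon_1},g_1,\ldots,e_n^{\epsilon_n},g_n)$ by replacing each $g_i$ by $\phi_{\sigma_i}(g_i)$ and each oriented edge letter $e^\pm$ by $\phi(e)f(e)^\pm$ (or its formal inverse), and verify that the elementary $G$-homotopies are preserved. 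These elementary moves are governed by precisely axioms (1) and (2) of Definition \ref{morphism_CoG}, so the computation is the same as above, and the tree ambiguity disappears; the resulting map is then checked to be a homomorphism on concatenation of $G$-loops, yielding the desired $\phi_*$.
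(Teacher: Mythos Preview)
The paper does not prove this proposition; it is quoted from \cite[Proposition III.$\mathcal{C}$.3.6]{bh} and no argument is supplied. Your $G$-path approach in the final paragraph is exactly the one Bridson--Haefliger use, and your identification of axioms (1) and (2) of Definition~\ref{morphism_CoG} as the facts governing the two nontrivial defining relations is correct.

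There is, however, a gap in your tree-based first pass that goes beyond the obstacle you flag. Even if one could arrange $|f|(T)\subset T'$, the image of $a^+$ under your map is $\phi(a)\,f(a)^+$; when $|f(a)|\in T'$ this collapses only to $\phi(a)\in G_{t(f(a))}$, which has no reason to be trivial. So the tree relations of the source do \emph{not} automatically hold in the target, and the map between tree presentations is not well-defined as written. The standard remedies are either to first adjust $\phi$ by a coboundary (in the sense of Construction~\ref{coboundary}) so that $\phi(a)=e$ along a chosen tree, or---as you ultimately do---to bypass the issue entirely by working with $G$-paths and the basepointed group $\pi_1(\cdot,\sigma)$, where no tree relation is ever imposed.
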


The following proposition is an immediate consequence of \cite[Proposition III.$\mathcal{C}$.3.10(1)-(2)]{bh}.
\begin{proposition}\label{prop_pi1inducedtoG}
    Let $\phi: H(\mathcal{Y})\to G$ be a morphism such that $\phi(a)=e$ for all $|a|\in T$. There exists a homomorphism $$\phi_*:\pi_1(H(\mathcal{Y}),T)\longrightarrow G$$
    described by
    \begin{align*}
        h&\longmapsto \phi_\sigma(h) &h\in H_\sigma, \sigma\in V(\mathcal{Y})\\
        a^+&\longmapsto \phi(a) &a\in E(\mathcal{Y})
    \end{align*}
\end{proposition}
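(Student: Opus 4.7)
The plan is to read $\phi_*$ directly off the presentation of $\pi_1(H(\mathcal{Y}),T)$ given in Definition \ref{def_pi1_presentation} and then verify that the prescribed assignment on generators respects each of the five families of defining relations. By the universal property of group presentations, this suffices to produce the desired homomorphism.

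First I would extend the assignment to negatively oriented edges by setting $\phi_*(a^-):=\phi(a)^{-1}$, which forces the inversion relation $(a^{\pm})^{-1}=a^{\mp}$ to hold. The internal relations of each $H_\sigma$ are preserved because $\phi_\sigma$ is, by Definition \ref{def_morphismtoG}, already a group homomorphism. The tree relations $a^{\pm}=1$ for $|a|\in T$ are satisfied by the standing hypothesis that $\phi(a)=e$ on $T$.

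The two remaining families of relations correspond precisely to the compatibility axioms of a morphism to a group. Applying $\phi_*$ to the composition relation $a^+b^+=h_{a,b}(ab)^+$ yields $\phi(a)\phi(b)=\phi_{t(a)}(h_{a,b})\phi(ab)$, which is exactly condition (2) of Definition \ref{def_morphismtoG}. Applying it to the conjugation relation $\psi_a(g)=a^+ga^-$ for $g\in H_{i(a)}$ yields $\phi_{t(a)}(\psi_a(g))=\phi(a)\phi_{i(a)}(g)\phi(a)^{-1}=\text{Ad}(\phi(a))\phi_{i(a)}(g)$, which is condition (1) of the same definition.

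Since every defining relation in the presentation is sent to a valid identity in $G$, the assignment extends to a well-defined homomorphism $\phi_*:\pi_1(H(\mathcal{Y}),T)\to G$ of the stated form. There is no substantive obstacle here: the axioms in Definition \ref{def_morphismtoG} of a morphism to a group are engineered precisely so that this verification is mechanical, which is why the statement is recorded as an immediate consequence of \cite[Proposition III.$\mathcal{C}$.3.10(1)-(2)]{bh}.
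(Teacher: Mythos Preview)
Your argument is correct: the paper does not supply its own proof but simply records the statement as an immediate consequence of \cite[Proposition III.$\mathcal{C}$.3.10(1)--(2)]{bh}, and your direct verification against the presentation in Definition~\ref{def_pi1_presentation} is exactly the standard argument underlying that citation. Each of the five relation-checks you perform is accurate, so there is nothing to add.
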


\subsection{Developability}\label{section_dev}
\begin{definition}
A complex of groups is \textit{developable} if it arises from a group acting on a simply connected scwol.
\end{definition}

The data of a developable complex of groups describes a group action. In particular, its local groups are precisely the stabilizers of objects up to conjugation, and injective homomorphisms correspond to inclusions of stabilizers. We refer the reader to \cite[Definition III.$\mathcal{C}$.2.9(1)]{bh} for more details on a complex of groups and morphism associated to a group action.

To determine developability, one may apply the developability criterion:
\begin{theorem}
\cite[Corollary III.$\mathcal{C}$.2.15]{bh}
\label{thm_developability_criterion}
    A complex of groups $H(\mathcal{Y})$ over a scwol $\mathcal{Y}$ is developable if and only if
    there exists some group $G$ and some morphism $\phi:H(\mathcal{Y})\to G$ that is injective on local groups, i.e. for all $\sigma\in V(\mathcal{Y})$,
    $\phi_\sigma: H_\sigma\to  G$
    is injective.
\end{theorem}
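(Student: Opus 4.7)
The plan is to treat the two directions separately. For the forward implication, assume $H(\mathcal{Y})$ is developable, so that it arises from a group $G$ acting without inversions on a simply connected scwol $\tilde{\mathcal{Y}}$ with quotient $\mathcal{Y}$. The recipe for the associated complex of groups identifies each $H_\sigma$ with the stabilizer of a chosen lift $\tilde{\sigma}$ of $\sigma$. Assembling these identifications together with elements of $G$ that record how the chosen lifts transform compatibly over each edge of $\mathcal{Y}$ produces a morphism $\phi : H(\mathcal{Y}) \to G$ in the sense of Definition \ref{def_morphismtoG}, whose local homomorphisms $\phi_\sigma$ are the inclusions of stabilizers and hence injective.

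For the converse, given a morphism $\phi : H(\mathcal{Y}) \to G$ injective on local groups, I would build a candidate development $\tilde{\mathcal{Y}}_\phi$ whose objects are pairs $(\sigma, g\phi_\sigma(H_\sigma))$ with $\sigma \in V(\mathcal{Y})$ and $g\phi_\sigma(H_\sigma) \in G/\phi_\sigma(H_\sigma)$, and whose morphisms above each $a \in E(\mathcal{Y})$ are parameterized by cosets in $G$ with incidence maps dictated by the elements $\phi(a) \in G$. Left multiplication by $G$ then defines an action on $\tilde{\mathcal{Y}}_\phi$ without inversions, and the injectivity of each $\phi_\sigma$ guarantees that the stabilizer of the base object $(\sigma, \phi_\sigma(H_\sigma))$ is precisely $\phi_\sigma(H_\sigma) \cong H_\sigma$. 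A direct computation, using the cocycle conditions of Definition \ref{def_CoG} together with the morphism conditions of Definition \ref{def_morphismtoG}, shows that the complex of groups induced by this $G$-action is isomorphic to $H(\mathcal{Y})$.

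To upgrade this to a bona fide development I would replace $G$ with $\pi_1(H(\mathcal{Y}), T)$. After possibly adjusting $\phi$ by a coboundary (Construction \ref{coboundary}) so that $\phi(a) = e$ for every $|a| \in T$, Proposition \ref{prop_pi1inducedtoG} factors $\phi$ through the natural morphism $\iota_T : H(\mathcal{Y}) \to \pi_1(H(\mathcal{Y}), T)$ of Remark \ref{remark_naturalmorphismtopi1}. Injectivity of $\phi_\sigma$ therefore forces injectivity of $(\iota_T)_\sigma$, and the preceding construction applied with $\iota_T$ in place of $\phi$ yields a scwol $D$ with a $\pi_1(H(\mathcal{Y}),T)$-action whose quotient complex of groups is $H(\mathcal{Y})$.

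The main obstacle is showing that $D$ is connected and simply connected. Connectedness follows from the fact that the generators of $\pi_1(H(\mathcal{Y}),T)$ listed in Definition \ref{def_pi1_presentation} suffice to reach any object of $D$ from the base object via zig-zags of morphisms. Simple connectivity is the delicate part: I would identify closed edge-loops in $D$ based at the base object with words in these generators representing the identity in $\pi_1(H(\mathcal{Y}),T)$, and then show that each defining relation of Definition \ref{def_pi1_presentation} — in particular the cocycle $a^+b^+ = g_{a,b}(ab)^+$, the conjugation relation $\psi_a(g) = a^+ g a^-$, and the tree relations $a^{\pm} = 1$ — is witnessed by a combinatorial $2$-disk in $D$ bounding the corresponding loop, so that every null-homotopic word lifts to a filling disk.
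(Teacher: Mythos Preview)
The paper does not give its own proof of this statement: Theorem~\ref{thm_developability_criterion} is simply quoted from \cite[Corollary III.$\mathcal{C}$.2.15]{bh} as background, so there is no in-paper argument to compare your proposal against.

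That said, your outline is sound and is essentially the Bridson--Haefliger argument. Two small comments. First, Construction~\ref{coboundary} as written in the paper produces an isomorphism between two complexes of groups, not a modification of a morphism $\phi: H(\mathcal{Y}) \to G$; the gadget you actually need is the companion fact that for any family $(k_\sigma)_{\sigma\in V(\mathcal{Y})}$ in $G$, the data $\phi'_\sigma = \mathrm{Ad}(k_\sigma)\phi_\sigma$ and $\phi'(a) = k_{t(a)}\phi(a)k_{i(a)}^{-1}$ again define a morphism, and choosing the $k_\sigma$ inductively along $T$ forces $\phi'(a)=e$ for $|a|\in T$. Second, note that the paper's definition of developability requires the scwol acted upon to be \emph{simply connected}, which is a bit more than what \cite[Corollary III.$\mathcal{C}$.2.15]{bh} alone gives (Bridson--Haefliger's definition only asks that the complex of groups arise from some action); you correctly identify that the extra input is the simple connectivity of $D(\mathcal{Y},\iota_T)$, which the paper records separately as Theorem~\ref{thm_D(Y,iota_T)}. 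Your relator-to-disk sketch for that step has the right shape, though it is where the genuine work lies.
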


\begin{definition}
    For every morphism $\phi:H(\mathcal{Y})\to G$ from a complex of groups $H(\mathcal{Y})$ to an arbitrary group $G$, there exists a scwol $D(\mathcal{Y},\phi)$ with a compatible $G-$action. We refer to $D(\mathcal{Y},\phi)$ as the \textit{development of $\mathcal{Y}$ with respect to $\phi$}.
\end{definition} The construction of $D(\mathcal{Y},\phi)$ may be found in \cite[Theorem III.$\mathcal{C}$.2.13]{bh}.

The following theorem is an immediate consequence of \cite[Theorem III.$\mathcal{C}$.2.13]{bh} and \cite[Theorem III.$\mathcal{C}$.3.13]{bh}. It
illustrates the significance of the development in the case where the complex of groups is developable.

\begin{theorem}
\label{thm_D(Y,iota_T)}
    If $G(\mathcal{Y})$ is a developable complex of groups, the development $D(\mathcal{Y}, \iota_T)$ is simply connected. Furthermore, $G(\mathcal{Y})$ and $\iota_T:G(\mathcal{Y})\to \pi_1(G(\mathcal{Y}),T)$ are the complex of groups and morphism associated to the action of $\pi_1(G(\mathcal{Y}),T)$ on $D(\mathcal{Y}, \iota_T)$ respectively.
    
    Conversely, if $G(\mathcal{Y})$ and $\iota_T$ are the complex of groups and morphism associated to the action of $\pi_1(G(\mathcal{Y}),T)$ on a simply connected scwol $\widetilde{\mathcal{Y}}$, then there is a $\pi_1(G(\mathcal{Y}),T)-$equivariant isomorphism $\widetilde{\mathcal{Y}}\to D(\mathcal{Y}, \iota_T)$ that projects to the identity on $\mathcal{Y}$.
\end{theorem}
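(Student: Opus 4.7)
The plan is to derive both directions directly from the cited Bridson--Haefliger results, using the developability criterion (Theorem~\ref{thm_developability_criterion}) as the bridge.

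For the forward direction, I would first verify that the natural morphism $\iota_T:G(\mathcal{Y}) \to \pi_1(G(\mathcal{Y}),T)$ is itself injective on local groups. Since $G(\mathcal{Y})$ is developable, Theorem~\ref{thm_developability_criterion} provides some group $G$ and morphism $\phi:G(\mathcal{Y})\to G$ that is injective on local groups; after adjusting $\phi$ by a coboundary (Construction~\ref{coboundary}) so that $\phi(a)=e$ for $|a|\in T$, Proposition~\ref{prop_pi1inducedtoG} factors $\phi$ through $\iota_T$ as $\phi=\phi_*\circ\iota_T$ on local groups, forcing $(\iota_T)_\sigma$ to be injective for every $\sigma\in V(\mathcal{Y})$. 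With this injectivity in hand, BH~III.$\mathcal{C}$.2.13 applies to produce the scwol $D(\mathcal{Y},\iota_T)$ together with a compatible $\pi_1(G(\mathcal{Y}),T)$-action whose associated complex of groups and morphism are precisely $G(\mathcal{Y})$ and $\iota_T$. Simple connectedness of $D(\mathcal{Y},\iota_T)$ is then the content of BH~III.$\mathcal{C}$.3.13, which identifies $\pi_1$ of the development with the kernel of the induced map on fundamental groups, trivial when the morphism used is $\iota_T$ itself.

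For the converse, the argument is a uniqueness statement for the development. BH~III.$\mathcal{C}$.2.13 characterizes $D(\mathcal{Y},\phi)$, up to $G$-equivariant isomorphism projecting to $\mathrm{id}_\mathcal{Y}$, by the datum of the associated $G$-action together with the quotient data $G(\mathcal{Y})$ and $\phi$. Since $\widetilde{\mathcal{Y}}$ is assumed to be a simply connected scwol on which $\pi_1(G(\mathcal{Y}),T)$ acts so as to recover exactly $G(\mathcal{Y})$ and $\iota_T$, comparing this data against the analogous data on $D(\mathcal{Y},\iota_T)$ yields the desired $\pi_1(G(\mathcal{Y}),T)$-equivariant isomorphism $\widetilde{\mathcal{Y}}\to D(\mathcal{Y},\iota_T)$ projecting to the identity on $\mathcal{Y}$.

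The main obstacle is verifying injectivity of $\iota_T$ on local groups for developable complexes; this is the only step that is not an immediate citation, and it requires the coboundary adjustment above in order to bring the abstract morphism $\phi$ into the form required by Proposition~\ref{prop_pi1inducedtoG}. Once this is settled, both halves of the theorem reduce to direct applications of BH~III.$\mathcal{C}$.2.13 and III.$\mathcal{C}$.3.13.
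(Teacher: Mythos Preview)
Your proposal is correct and matches the paper's treatment: the paper states this theorem as an immediate consequence of \cite[Theorem~III.$\mathcal{C}$.2.13]{bh} and \cite[Theorem~III.$\mathcal{C}$.3.13]{bh} without giving a proof, and your outline is precisely a sketch of how those two citations combine to yield both directions. One minor technical point: the adjustment you need in order to arrange $\phi(a)=e$ for $|a|\in T$ is a \emph{homotopy of morphisms to a group} (see \cite[III.$\mathcal{C}$.2.4 and Proposition~III.$\mathcal{C}$.3.9]{bh}) rather than the coboundary of Construction~\ref{coboundary}, which produces an isomorphism between two complexes of groups over the same scwol; the underlying conjugation mechanism is the same, so the conclusion is unaffected.
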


The proposition below follows immediately from \cite[Proposition III.$\mathcal{C}$.2.18]{bh}:\begin{proposition}\cite[Proposition III.$\mathcal{C}$.2.18]{bh}
\label{inducedmapondevs}
    A morphism of complexes of groups $\phi:H(\mathcal{Y})\to G(\mathcal{X})$ over a morphism of scwols $f:\mathcal{Y}\to \mathcal{X}$ induces a $\phi_*-$equivariant morphism on developments, denoted by $\Phi:D(\mathcal{Y}, \iota_T)\to D(\mathcal{X}, \iota_{T'})$.
\end{proposition}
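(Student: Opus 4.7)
The plan is to explicitly construct $\Phi$ from the data of $\phi$ and then verify it is a $\phi_*$-equivariant morphism of scwols. Recall from \cite[Theorem III.$\mathcal{C}$.2.13]{bh} that the objects and edges of $D(\mathcal{Y}, \iota_T)$ are described as equivalence classes $[g, \sigma]$ and $[g, a]$, with $g \in \pi_1(H(\mathcal{Y}), T)$ representing a coset modulo the image of $H_\sigma$ (respectively $H_{i(a)}$), and the $\pi_1(H(\mathcal{Y}), T)$-action being left multiplication on the first coordinate; analogous descriptions hold for $D(\mathcal{X}, \iota_{T'})$. I would define
\begin{align*}
\Phi : [g, \sigma] &\longmapsto [\phi_*(g), f(\sigma)],\\
\Phi : [g, a] &\longmapsto [\phi_*(g), f(a)],
\end{align*}
where $\phi_* : \pi_1(H(\mathcal{Y}),T)\to \pi_1(G(\mathcal{X}),T')$ is the induced homomorphism of Proposition \ref{prop_pi1induced}.

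Well-definedness reduces to checking that $\phi_*$ carries the image of $H_\sigma$ in $\pi_1(H(\mathcal{Y}),T)$ into the image of $G_{f(\sigma)}$ in $\pi_1(G(\mathcal{X}),T')$, which is immediate from the formula $\phi_*(h) = \phi_\sigma(h)$ on local groups (Proposition \ref{prop_pi1induced}), together with the fact that $\phi_\sigma: H_\sigma \to G_{f(\sigma)}$ by Definition \ref{morphism_CoG}. The analogous check holds on edges for $H_{i(a)}$. The initial-vertex compatibility $\Phi(i[g,a]) = i(\Phi[g,a])$ is immediate from $i(f(a)) = f(i(a))$, and $\phi_*$-equivariance is trivial, since the action is by left multiplication on the first coordinate and $\phi_*(kg) = \phi_*(k)\phi_*(g)$.

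The only verification requiring any unpacking is the terminal-vertex compatibility $\Phi(t[g,a]) = t(\Phi[g,a])$. Using the formula $\phi_*(a^+) = \phi(a)\, f(a)^+$ from Proposition \ref{prop_pi1induced}, the passage $\phi_*((a^+)^{\pm 1})$ produces an extra factor of $\phi(a)^{\pm 1}$ on the first coordinate relative to the ``untwisted'' expression $\phi_*(g)(f(a)^+)^{\pm 1}$. The point is that $\phi(a) \in G_{t(f(a))} = G_{f(t(a))}$, so this extra factor lies inside the subgroup quotiented out at the vertex $f(t(a))$, and the two cosets coincide. This is the main, and essentially only, obstacle in the argument; its resolution reflects precisely why the morphism data $\phi(a)$ was required to live in $G_{t(f(a))}$ (Definition \ref{morphism_CoG}). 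With this check in hand, $\Phi$ defines a morphism of scwols, and the whole construction is natural in $\phi$.
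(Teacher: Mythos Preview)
The paper does not supply its own proof of this proposition; it is simply quoted from \cite[Proposition III.$\mathcal{C}$.2.18]{bh} with the remark that it ``follows immediately''. Your write-up is essentially the explicit construction carried out in \cite{bh}, specialized to the natural morphisms $\iota_T$ and $\iota_{T'}$, and your key verification (terminal-vertex compatibility via $\phi(a)\in G_{t(f(a))}$) is exactly the point of that construction.

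Two small points to tighten. First, Proposition~\ref{prop_pi1induced} as stated in the paper gives $\phi_*$ between the \emph{basepoint} fundamental groups $\pi_1(\,\cdot\,,\sigma)$, whereas $\iota_T$ lands in the \emph{tree} version $\pi_1(\,\cdot\,,T)$; the formula $\phi_*(a^+)=\phi(a)\,f(a)^+$ need not hold verbatim in the tree version when $|a|\in T$ but $|f(a)|\notin T'$. The clean fix is either to transport through the isomorphisms of \cite[Theorem III.$\mathcal{C}$.3.7]{bh}, or---more in the spirit of \cite[III.$\mathcal{C}$.2.18]{bh}---to invoke that result directly for the commuting square $\iota_{T'}\circ\phi$ versus $\phi_*\circ\iota_T$, which holds by the universal property of $\pi_1$. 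Second, to conclude that $\Phi$ is a morphism of scwols in the sense of Definition~\ref{def_morphism_scwols} you must also check non-degeneracy; this follows because edges of $D(\mathcal{Y},\iota_T)$ with initial vertex $[g,\sigma]$ are parametrized by $\{a\in E(\mathcal{Y}):i(a)=\sigma\}$, and $f$ is already assumed non-degenerate.
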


Our next definition lends from the topological definition of an elevation as an embedding between covers.

\begin{definition}
    If $\phi:H(\mathcal{Y})\to G(\mathcal{X})$ is a $\pi_1-$injective morphism between developable complexes of groups, we say that $$|\Phi|:|D(\mathcal{Y}, \iota_T)|\longrightarrow |D(\mathcal{X}, \iota_{T'})|$$is an \textit{elevation}.
\end{definition}

\subsection{The local development}
\label{sect_localdev} 
In this section we give explicit constructions of ``local scwols" and \textit{local developments}. These are used to define \textit{local complexes of groups} in Section \ref{sect_localCoG}. For the rest of the section we fix a scwol $\mathcal{Y}$ and $\gamma\in V(\mathcal{Y})$.

\begin{definition}\cite[Definition III.$\mathcal{C}$.1.17]{bh}
\label{def_Lk}
    The \textit{upper link of $\gamma$} is a scwol $\text{Lk}_{\gamma}$ defined by
    \begin{gather*}
        V(\text{Lk}_{\gamma})=\{c\in E(\mathcal{Y})|t(c)=\gamma\}\\ E(\text{Lk}_{\gamma})=\{(c,d)\in E^{(2)}(\mathcal{Y})|t(c)=\gamma\}
    \end{gather*}
    with source and target maps $i:(c,d)\mapsto cd, t:(c,d)\mapsto c$ and composition is defined as $$(c,d)(cd,d')\longmapsto(c,dd').$$
    
    Similarly, the \textit{lower link of $\gamma$} is a scwol $\text{Lk}^{\gamma}$ defined by
    \begin{gather*}
        V(\text{Lk}^{\gamma})=\{b\in E(\mathcal{Y})|i(b)=\gamma\}\\  E(\text{Lk}^{\gamma})=\{(a,b)\in E^{(2)}(\mathcal{Y})|i(b)=\gamma\}
    \end{gather*}
    with source and target maps $i:(a,b)\mapsto b, t:(a,b)\mapsto ab$, and composition is defined by $$(a',ab)(a,b)\longmapsto(a'a, b).$$
\end{definition}

In \cite[Definition III.$\mathcal{C}$.1.17]{bh}, the scwol $\mathcal{Y}(\gamma)$ is described as a \textit{join}\footnote{A join of scwols is defined in \cite[Definition III.$\mathcal{C}$.1.16]{bh}; its geometric realization coincides with the topological definition of a join.} of scwols consisting of $\gamma$, its upper link and its lower link. For the purpose of defining local complexes of groups in Section \ref{sect_localCoG}, we give an explicit description of $\mathcal{Y}(\gamma)$:

\begin{definition}\label{def_Y(gamma)}
The scwol $\mathcal{Y}(\gamma)$ is defined by
\begin{gather*}
    V(\mathcal{Y}(\gamma))=V(\text{Lk}_\gamma)\sqcup \{\gamma\}\sqcup V(\text{Lk}^\gamma)
\end{gather*}
\begin{align*}
    E(\mathcal{Y}(\gamma))=&E(\text{Lk}_\gamma)\\&\sqcup (\{\gamma \}\times V(\text{Lk}_\gamma))\\&\sqcup  (V(\text{Lk}_\gamma)\times  V(\text{Lk}^\gamma)) \\&\sqcup (V(\text{Lk}^\gamma)\times \{\gamma \})\\&\sqcup E(\text{Lk}^\gamma).
\end{align*}
It has source and target maps
\begin{align*}
    i:E(\mathcal{Y}(\gamma))&\longrightarrow V(\mathcal{Y}(\gamma)) &t:E(\mathcal{Y}(\gamma))&\longrightarrow V(\mathcal{Y}(\gamma))\\
    (c,d)&\longmapsto cd &(c,d)&\longmapsto c &(c,d)\in E(\text{Lk}_\gamma)\\
    \gamma*c&\longmapsto c &\gamma*c&\longmapsto \gamma &\gamma*c\in \{\gamma\}\times V(\text{Lk}_\gamma)\\
    b*c&\longmapsto c &b*c&\longmapsto b &b*c\in V(\text{Lk}_\gamma)\times  V(\text{Lk}^\gamma)\\
    b*\gamma &\longmapsto \gamma &b*\gamma &\longmapsto b & b*\gamma\in V(\text{Lk}_\gamma)\times\{\gamma\}\\
    (a,b) &\longmapsto b &(a,b) &\longmapsto ab &(a,b)\in E(\text{Lk}^\gamma)
\end{align*}
Composition on $E^{(2)}(\text{Lk}_\gamma)$ and $E^{(2)}(\text{Lk}^\gamma)$ follow from Definition \ref{def_Lk}. On the remaining pairs of composable morphisms, we have:
\begin{align*}
(\gamma*c,(c,d))&\longmapsto \gamma*cd\\
(b*c,(c,d))&\longmapsto b*cd\\
((a,b),b*c)&\longmapsto ab*c\\
(b*\gamma,\gamma*c)&\longmapsto b*c\\
(b*\gamma,\gamma*c)&\longmapsto b*c\\
((a,b),b*\gamma)&\longmapsto ab*\gamma
\end{align*}

We refer to the geometric realization of $\mathcal{Y}(\gamma)$ as the (closed) star $\text{St}(\gamma)$ of $\gamma$, while its interior is the open star $\text{st}(\gamma)$ of $\gamma$.
\end{definition}

\begin{center}
\tikzset{every picture/.style={line width=0.75pt}} 

\begin{tikzpicture}[x=0.75pt,y=0.75pt,yscale=-1,xscale=1, scale=.8]

\draw [color={rgb, 255:red, 14; green, 75; blue, 147 }  ,draw opacity=1 ]   (43.09,150.33) -- (143.89,247.22) ;
\draw [shift={(145.33,248.6)}, rotate = 223.87] [color={rgb, 255:red, 14; green, 75; blue, 147 }  ,draw opacity=1 ][line width=0.75]    (10.93,-4.9) .. controls (6.95,-2.3) and (3.31,-0.67) .. (0,0) .. controls (3.31,0.67) and (6.95,2.3) .. (10.93,4.9)   ;
\draw [color={rgb, 255:red, 14; green, 75; blue, 147 }  ,draw opacity=1 ]   (146.78,247.23) -- (250,150) ;
\draw [shift={(145.33,248.6)}, rotate = 316.71] [color={rgb, 255:red, 14; green, 75; blue, 147 }  ,draw opacity=1 ][line width=0.75]    (10.93,-4.9) .. controls (6.95,-2.3) and (3.31,-0.67) .. (0,0) .. controls (3.31,0.67) and (6.95,2.3) .. (10.93,4.9)   ;
\draw [color={rgb, 255:red, 240; green, 107; blue, 0 }  ,draw opacity=1 ]   (41.45,348.63) -- (145.33,250.4) ;
\draw [shift={(40,350)}, rotate = 316.6] [color={rgb, 255:red, 240; green, 107; blue, 0 }  ,draw opacity=1 ][line width=0.75]    (10.93,-4.9) .. controls (6.95,-2.3) and (3.31,-0.67) .. (0,0) .. controls (3.31,0.67) and (6.95,2.3) .. (10.93,4.9)   ;
\draw [color={rgb, 255:red, 240; green, 107; blue, 0 }  ,draw opacity=1 ]   (145.33,250.4) -- (248.55,348.62) ;
\draw [shift={(250,350)}, rotate = 223.58] [color={rgb, 255:red, 240; green, 107; blue, 0 }  ,draw opacity=1 ][line width=0.75]    (10.93,-3.29) .. controls (6.95,-1.4) and (3.31,-0.3) .. (0,0) .. controls (3.31,0.3) and (6.95,1.4) .. (10.93,3.29)   ;
\draw [color={rgb, 255:red, 0; green, 0; blue, 0 }  ,draw opacity=1 ]   (145.33,248.6) ;
\draw [shift={(145.33,248.6)}, rotate = 0] [color={rgb, 255:red, 0; green, 0; blue, 0 }  ,draw opacity=1 ][fill={rgb, 255:red, 0; green, 0; blue, 0 }  ,fill opacity=1 ][line width=0.75]      (0, 0) circle [x radius= 3.35, y radius= 3.35]   ;
\draw [color={rgb, 255:red, 208; green, 2; blue, 27 }  ,draw opacity=1 ]   (45.09,150.33) -- (250,150) ;
\draw [shift={(43.09,150.33)}, rotate = 359.91] [color={rgb, 255:red, 208; green, 2; blue, 27 }  ,draw opacity=1 ][line width=0.75]    (10.93,-4.9) .. controls (6.95,-2.3) and (3.31,-0.67) .. (0,0) .. controls (3.31,0.67) and (6.95,2.3) .. (10.93,4.9)   ;
\draw [color={rgb, 255:red, 65; green, 117; blue, 5 }  ,draw opacity=1 ]   (40,350) -- (248,350) ;
\draw [shift={(250,350)}, rotate = 180] [color={rgb, 255:red, 65; green, 117; blue, 5 }  ,draw opacity=1 ][line width=0.75]    (10.93,-4.9) .. controls (6.95,-2.3) and (3.31,-0.67) .. (0,0) .. controls (3.31,0.67) and (6.95,2.3) .. (10.93,4.9)   ;
\draw [color={rgb, 255:red, 128; green, 128; blue, 128 }  ,draw opacity=1 ]   (41.07,150.34) -- (40.01,348) ;
\draw [shift={(40,350)}, rotate = 270.31] [color={rgb, 255:red, 128; green, 128; blue, 128 }  ,draw opacity=1 ][line width=0.75]    (10.93,-3.29) .. controls (6.95,-1.4) and (3.31,-0.3) .. (0,0) .. controls (3.31,0.3) and (6.95,1.4) .. (10.93,3.29)   ;
\draw [color={rgb, 255:red, 128; green, 128; blue, 128 }  ,draw opacity=1 ]   (250,150) -- (250,348) ;
\draw [shift={(250,350)}, rotate = 270] [color={rgb, 255:red, 128; green, 128; blue, 128 }  ,draw opacity=1 ][line width=0.75]    (10.93,-3.29) .. controls (6.95,-1.4) and (3.31,-0.3) .. (0,0) .. controls (3.31,0.3) and (6.95,1.4) .. (10.93,3.29)   ;
\draw [color={rgb, 255:red, 155; green, 155; blue, 155 }  ,draw opacity=1 ]   (43.09,150.33) .. controls (125.71,177.52) and (207.17,260.98) .. (249.37,348.68) ;
\draw [shift={(250,350)}, rotate = 244.53] [color={rgb, 255:red, 155; green, 155; blue, 155 }  ,draw opacity=1 ][line width=0.75]    (10.93,-3.29) .. controls (6.95,-1.4) and (3.31,-0.3) .. (0,0) .. controls (3.31,0.3) and (6.95,1.4) .. (10.93,3.29)   ;
\draw [color={rgb, 255:red, 155; green, 155; blue, 155 }  ,draw opacity=1 ]   (250,150) .. controls (223,162.4) and (189.67,177.17) .. (167.78,192.49) .. controls (146.01,207.73) and (74.37,278.59) .. (40.51,348.94) ;
\draw [shift={(40,350)}, rotate = 295.45] [color={rgb, 255:red, 155; green, 155; blue, 155 }  ,draw opacity=1 ][line width=0.75]    (10.93,-3.29) .. controls (6.95,-1.4) and (3.31,-0.3) .. (0,0) .. controls (3.31,0.3) and (6.95,1.4) .. (10.93,3.29)   ;
\draw [color={rgb, 255:red, 208; green, 2; blue, 27 }  ,draw opacity=1 ]   (43.09,150.33) ;
\draw [shift={(43.09,150.33)}, rotate = 0] [color={rgb, 255:red, 208; green, 2; blue, 27 }  ,draw opacity=1 ][fill={rgb, 255:red, 208; green, 2; blue, 27 }  ,fill opacity=1 ][line width=0.75]      (0, 0) circle [x radius= 3.35, y radius= 3.35]   ;
\draw [color={rgb, 255:red, 208; green, 2; blue, 27 }  ,draw opacity=1 ]   (250,150) ;
\draw [shift={(250,150)}, rotate = 0] [color={rgb, 255:red, 208; green, 2; blue, 27 }  ,draw opacity=1 ][fill={rgb, 255:red, 208; green, 2; blue, 27 }  ,fill opacity=1 ][line width=0.75]      (0, 0) circle [x radius= 3.35, y radius= 3.35]   ;
\draw [color={rgb, 255:red, 65; green, 117; blue, 5 }  ,draw opacity=1 ]   (40,350) ;
\draw [shift={(40,350)}, rotate = 0] [color={rgb, 255:red, 65; green, 117; blue, 5 }  ,draw opacity=1 ][fill={rgb, 255:red, 65; green, 117; blue, 5 }  ,fill opacity=1 ][line width=0.75]      (0, 0) circle [x radius= 3.35, y radius= 3.35]   ;
\draw [color={rgb, 255:red, 65; green, 117; blue, 5 }  ,draw opacity=1 ]   (250,350) ;
\draw [shift={(250,350)}, rotate = 0] [color={rgb, 255:red, 65; green, 117; blue, 5 }  ,draw opacity=1 ][fill={rgb, 255:red, 65; green, 117; blue, 5 }  ,fill opacity=1 ][line width=0.75]      (0, 0) circle [x radius= 3.35, y radius= 3.35]   ;
\draw [color={rgb, 255:red, 14; green, 75; blue, 147 }  ,draw opacity=1 ]   (384.09,150.33) -- (484.89,247.22) ;
\draw [shift={(486.33,248.6)}, rotate = 223.87] [color={rgb, 255:red, 14; green, 75; blue, 147 }  ,draw opacity=1 ][line width=0.75]    (10.93,-4.9) .. controls (6.95,-2.3) and (3.31,-0.67) .. (0,0) .. controls (3.31,0.67) and (6.95,2.3) .. (10.93,4.9)   ;
\draw [color={rgb, 255:red, 14; green, 75; blue, 147 }  ,draw opacity=1 ]   (487.78,247.23) -- (591,150) ;
\draw [shift={(486.33,248.6)}, rotate = 316.71] [color={rgb, 255:red, 14; green, 75; blue, 147 }  ,draw opacity=1 ][line width=0.75]    (10.93,-4.9) .. controls (6.95,-2.3) and (3.31,-0.67) .. (0,0) .. controls (3.31,0.67) and (6.95,2.3) .. (10.93,4.9)   ;
\draw [color={rgb, 255:red, 240; green, 107; blue, 0 }  ,draw opacity=1 ]   (382.45,348.63) -- (486.33,250.4) ;
\draw [shift={(381,350)}, rotate = 316.6] [color={rgb, 255:red, 240; green, 107; blue, 0 }  ,draw opacity=1 ][line width=0.75]    (10.93,-4.9) .. controls (6.95,-2.3) and (3.31,-0.67) .. (0,0) .. controls (3.31,0.67) and (6.95,2.3) .. (10.93,4.9)   ;
\draw [color={rgb, 255:red, 240; green, 107; blue, 0 }  ,draw opacity=1 ]   (486.33,250.4) -- (589.55,348.62) ;
\draw [shift={(591,350)}, rotate = 223.58] [color={rgb, 255:red, 240; green, 107; blue, 0 }  ,draw opacity=1 ][line width=0.75]    (10.93,-3.29) .. controls (6.95,-1.4) and (3.31,-0.3) .. (0,0) .. controls (3.31,0.3) and (6.95,1.4) .. (10.93,3.29)   ;
\draw [color={rgb, 255:red, 0; green, 0; blue, 0 }  ,draw opacity=1 ]   (486.33,248.6) ;
\draw [shift={(486.33,248.6)}, rotate = 0] [color={rgb, 255:red, 0; green, 0; blue, 0 }  ,draw opacity=1 ][fill={rgb, 255:red, 0; green, 0; blue, 0 }  ,fill opacity=1 ][line width=0.75]      (0, 0) circle [x radius= 3.35, y radius= 3.35]   ;
\draw [color={rgb, 255:red, 208; green, 2; blue, 27 }  ,draw opacity=1 ]   (386.09,150.33) -- (591,150) ;
\draw [shift={(384.09,150.33)}, rotate = 359.91] [color={rgb, 255:red, 208; green, 2; blue, 27 }  ,draw opacity=1 ][line width=0.75]    (10.93,-4.9) .. controls (6.95,-2.3) and (3.31,-0.67) .. (0,0) .. controls (3.31,0.67) and (6.95,2.3) .. (10.93,4.9)   ;
\draw [color={rgb, 255:red, 65; green, 117; blue, 5 }  ,draw opacity=1 ]   (381,350) -- (589,350) ;
\draw [shift={(591,350)}, rotate = 180] [color={rgb, 255:red, 65; green, 117; blue, 5 }  ,draw opacity=1 ][line width=0.75]    (10.93,-4.9) .. controls (6.95,-2.3) and (3.31,-0.67) .. (0,0) .. controls (3.31,0.67) and (6.95,2.3) .. (10.93,4.9)   ;
\draw [color={rgb, 255:red, 128; green, 128; blue, 128 }  ,draw opacity=1 ]   (382.07,150.34) -- (381.01,348) ;
\draw [shift={(381,350)}, rotate = 270.31] [color={rgb, 255:red, 128; green, 128; blue, 128 }  ,draw opacity=1 ][line width=0.75]    (10.93,-3.29) .. controls (6.95,-1.4) and (3.31,-0.3) .. (0,0) .. controls (3.31,0.3) and (6.95,1.4) .. (10.93,3.29)   ;
\draw [color={rgb, 255:red, 128; green, 128; blue, 128 }  ,draw opacity=1 ]   (591,150) -- (591,348) ;
\draw [shift={(591,350)}, rotate = 270] [color={rgb, 255:red, 128; green, 128; blue, 128 }  ,draw opacity=1 ][line width=0.75]    (10.93,-3.29) .. controls (6.95,-1.4) and (3.31,-0.3) .. (0,0) .. controls (3.31,0.3) and (6.95,1.4) .. (10.93,3.29)   ;
\draw [color={rgb, 255:red, 155; green, 155; blue, 155 }  ,draw opacity=1 ]   (384.09,150.33) .. controls (466.71,177.52) and (548.17,260.98) .. (590.37,348.68) ;
\draw [shift={(591,350)}, rotate = 244.53] [color={rgb, 255:red, 155; green, 155; blue, 155 }  ,draw opacity=1 ][line width=0.75]    (10.93,-3.29) .. controls (6.95,-1.4) and (3.31,-0.3) .. (0,0) .. controls (3.31,0.3) and (6.95,1.4) .. (10.93,3.29)   ;
\draw [color={rgb, 255:red, 155; green, 155; blue, 155 }  ,draw opacity=1 ]   (591,150) .. controls (564,162.4) and (530.67,177.17) .. (508.78,192.49) .. controls (487.01,207.73) and (415.37,278.59) .. (381.51,348.94) ;
\draw [shift={(381,350)}, rotate = 295.45] [color={rgb, 255:red, 155; green, 155; blue, 155 }  ,draw opacity=1 ][line width=0.75]    (10.93,-3.29) .. controls (6.95,-1.4) and (3.31,-0.3) .. (0,0) .. controls (3.31,0.3) and (6.95,1.4) .. (10.93,3.29)   ;
\draw [color={rgb, 255:red, 14; green, 75; blue, 147 }  ,draw opacity=1 ]   (384.09,150.33) ;
\draw [shift={(384.09,150.33)}, rotate = 0] [color={rgb, 255:red, 14; green, 75; blue, 147 }  ,draw opacity=1 ][fill={rgb, 255:red, 14; green, 75; blue, 147 }  ,fill opacity=1 ][line width=0.75]      (0, 0) circle [x radius= 3.35, y radius= 3.35]   ;
\draw [color={rgb, 255:red, 14; green, 75; blue, 147 }  ,draw opacity=1 ]   (591,150) ;
\draw [shift={(591,150)}, rotate = 0] [color={rgb, 255:red, 14; green, 75; blue, 147 }  ,draw opacity=1 ][fill={rgb, 255:red, 14; green, 75; blue, 147 }  ,fill opacity=1 ][line width=0.75]      (0, 0) circle [x radius= 3.35, y radius= 3.35]   ;
\draw [color={rgb, 255:red, 240; green, 107; blue, 0 }  ,draw opacity=1 ]   (381,350) ;
\draw [shift={(381,350)}, rotate = 0] [color={rgb, 255:red, 240; green, 107; blue, 0 }  ,draw opacity=1 ][fill={rgb, 255:red, 240; green, 107; blue, 0 }  ,fill opacity=1 ][line width=0.75]      (0, 0) circle [x radius= 3.35, y radius= 3.35]   ;
\draw [color={rgb, 255:red, 240; green, 107; blue, 0 }  ,draw opacity=1 ]   (591,350) ;
\draw [shift={(591,350)}, rotate = 0] [color={rgb, 255:red, 240; green, 107; blue, 0 }  ,draw opacity=1 ][fill={rgb, 255:red, 240; green, 107; blue, 0 }  ,fill opacity=1 ][line width=0.75]      (0, 0) circle [x radius= 3.35, y radius= 3.35]   ;

\draw (30,110) node [anchor=north west][inner sep=0.75pt]    {$\text{Minimal Subscwol of } \mathcal{Y} \text{ Containing }\gamma$};
\draw (140.78,131.51) node [anchor=north west][inner sep=0.75pt]  [color={rgb, 255:red, 208; green, 2; blue, 27 }  ,opacity=1 ,rotate=-0.59]  {$d$};
\draw (136.45,331.31) node [anchor=north west][inner sep=0.75pt]  [color={rgb, 255:red, 65; green, 117; blue, 5 }  ,opacity=1 ,rotate=-0.59]  {$a$};
\draw (93.15,213.68) node [anchor=north west][inner sep=0.75pt]  [color={rgb, 255:red, 14; green, 75; blue, 147 }  ,opacity=1 ,rotate=-0.59]  {$c$};
\draw (188.43,209.93) node [anchor=north west][inner sep=0.75pt]  [color={rgb, 255:red, 14; green, 75; blue, 147 }  ,opacity=1 ,rotate=-0.59]  {$cd$};
\draw (94.63,303.62) node [anchor=north west][inner sep=0.75pt]  [color={rgb, 255:red, 240; green, 107; blue, 0 }  ,opacity=1 ,rotate=-0.59]  {$b$};
\draw (44.62,243.31) node [anchor=north west][inner sep=0.75pt]  [color={rgb, 255:red, 128; green, 128; blue, 128 }  ,opacity=1 ]  {$bc$};
\draw (179.32,301.78) node [anchor=north west][inner sep=0.75pt]  [color={rgb, 255:red, 240; green, 107; blue, 0 }  ,opacity=1 ,rotate=-0.59]  {$ab$};
\draw (138.47,261.95) node [anchor=north west][inner sep=0.75pt]  [font=\large,color={rgb, 255:red, 0; green, 0; blue, 0 }  ,opacity=1 ,rotate=-0.59]  {$\gamma $};
\draw (462.63,110) node [anchor=north west][inner sep=0.75pt]    {$\mathcal{Y}( \gamma )$};
\draw (378.37,128.31) node [anchor=north west][inner sep=0.75pt]  [color={rgb, 255:red, 14; green, 75; blue, 147 }  ,opacity=1 ,rotate=-0.59]  {$c$};
\draw (583.56,127.64) node [anchor=north west][inner sep=0.75pt]  [color={rgb, 255:red, 14; green, 75; blue, 147 }  ,opacity=1 ,rotate=-0.59]  {$cd$};
\draw (374.47,355.95) node [anchor=north west][inner sep=0.75pt]  [color={rgb, 255:red, 65; green, 117; blue, 5 }  ,opacity=1 ,rotate=-0.59]  {$b$};
\draw (462.78,131.51) node [anchor=north west][inner sep=0.75pt]  [color={rgb, 255:red, 208; green, 2; blue, 27 }  ,opacity=1 ,rotate=-0.59]  {$( c,d)$};
\draw (467.45,331.31) node [anchor=north west][inner sep=0.75pt]  [color={rgb, 255:red, 65; green, 117; blue, 5 }  ,opacity=1 ,rotate=-0.59]  {$( a,b)$};
\draw (408,212) node [anchor=north west][inner sep=0.75pt]  [color={rgb, 255:red, 14; green, 75; blue, 147 }  ,opacity=1 ,rotate=-0.59]  {$\gamma *c$};
\draw (534.43,206.93) node [anchor=north west][inner sep=0.75pt]  [color={rgb, 255:red, 14; green, 75; blue, 147 }  ,opacity=1 ,rotate=-0.59]  {$\gamma *cd$};
\draw (435.63,303.62) node [anchor=north west][inner sep=0.75pt]  [color={rgb, 255:red, 240; green, 107; blue, 0 }  ,opacity=1 ,rotate=-0.59]  {$b*\gamma $};
\draw (385.62,242.31) node [anchor=north west][inner sep=0.75pt]  [color={rgb, 255:red, 128; green, 128; blue, 128 }  ,opacity=1 ]  {$b*c$};
\draw (502.32,301.78) node [anchor=north west][inner sep=0.75pt]  [color={rgb, 255:red, 240; green, 107; blue, 0 }  ,opacity=1 ,rotate=-0.59]  {$ab*\gamma $};
\draw (585.47,355.95) node [anchor=north west][inner sep=0.75pt]  [color={rgb, 255:red, 65; green, 117; blue, 5 }  ,opacity=1 ,rotate=-0.59]  {$ab$};
\draw (479.47,263.95) node [anchor=north west][inner sep=0.75pt]  [font=\large,color={rgb, 255:red, 0; green, 0; blue, 0 }  ,opacity=1 ,rotate=-0.59]  {$\gamma $};

\end{tikzpicture}

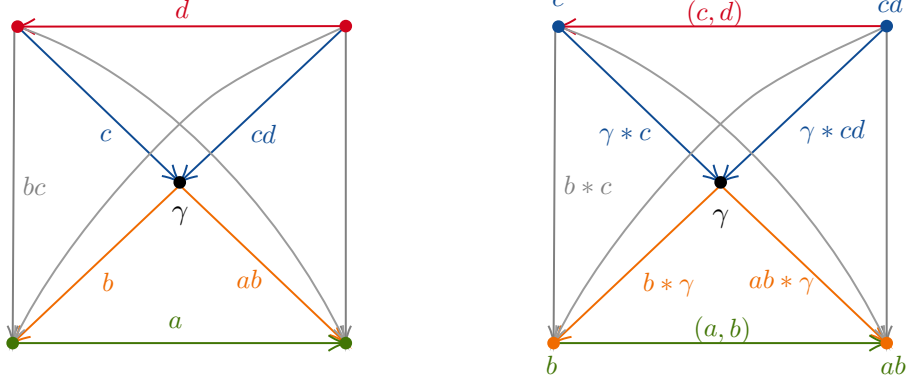
\captionof{figure}{The scwol $\mathcal{Y}(\gamma)$ (right) illustrated by all the types of morphisms it contains.}
\end{center}

\begin{proposition}\cite[III.$\mathcal{C}$.1.17, p.533]{bh}\label{prop_morphism_Y(gamma)toY}
    The morphism of scwols $h:\mathcal{Y}(\gamma)\to\mathcal{Y}$ is defined by
\begin{align*}
    V(\mathcal{Y}(\gamma))&\longrightarrow V(\mathcal{Y})\\
    c&\longmapsto i(c) &c\in V(\text{Lk}_\gamma)\\
    \gamma&\longmapsto\gamma &\gamma\in\{\gamma\}\\
    b&\longmapsto t(b) &b\in V(\text{Lk}^\gamma)
\end{align*}
\begin{align*}
    E(\mathcal{Y}(\gamma))&\longrightarrow E(\mathcal{Y})\\
    (c,d)&\longmapsto d &(c,d)\in E(\text{Lk}_\gamma)\\
    \gamma*c&\longmapsto c &\gamma*c\in \{\gamma\}\times V(\text{Lk}_\gamma)\\
    b*c&\longmapsto bc &b*c\in V(\text{Lk}^\gamma)\times V(\text{Lk}_\gamma)\\
    b*\gamma&\longmapsto b &b*\gamma\in V(\text{Lk}^\gamma)\times\{\gamma\}\\
    (a,b)&\longmapsto a &(a,b)\in E(\text{Lk}^\gamma)
\end{align*}
\end{proposition}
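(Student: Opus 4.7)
The plan is to verify the two conditions that make $h$ a morphism of scwols in the sense of Definition \ref{def_morphism_scwols}: first, that $h$ is a functor (it respects source, target, and composition); and second, that $h$ is non-degenerate, i.e., for each $\sigma \in V(\mathcal{Y}(\gamma))$ the restriction of $h$ gives a bijection between edges of $\mathcal{Y}(\gamma)$ with source $\sigma$ and edges of $\mathcal{Y}$ with source $h(\sigma)$.

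For the functoriality check, I would run through the five disjoint classes of edges of $E(\mathcal{Y}(\gamma))$ from Definition \ref{def_Y(gamma)}. Preservation of source and target is a short unraveling of definitions in each case: for $(c, d) \in E(\text{Lk}_\gamma)$, for example, the assignment $h((c, d)) = d$ satisfies $i(d) = i(cd) = h(i((c,d)))$ and $t(d) = i(c) = h(t((c,d)))$, using the constraint $t(c) = \gamma$ together with the scwol composition axioms; the remaining edge types ($\gamma * c$, $b * c$, $b * \gamma$, and $(a, b) \in E(\text{Lk}^\gamma)$) are handled by entirely analogous short calculations. For composition preservation, the five cross-type composition formulas in Definition \ref{def_Y(gamma)} together with the intrinsic compositions in $E(\text{Lk}_\gamma)$ and $E(\text{Lk}^\gamma)$ from Definition \ref{def_Lk} each reduce, after expanding $h$, to an instance of associativity in $\mathcal{Y}$.

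For non-degeneracy, I fix $\sigma \in V(\mathcal{Y}(\gamma))$ and case on which part of the join $V(\text{Lk}_\gamma) \sqcup \{\gamma\} \sqcup V(\text{Lk}^\gamma)$ contains $\sigma$. When $\sigma = \gamma$ the argument is immediate: the edges out of $\gamma$ in $\mathcal{Y}(\gamma)$ are exactly $\{b * \gamma : b \in V(\text{Lk}^\gamma)\}$, and under $h$ these biject with $V(\text{Lk}^\gamma) = \{e \in E(\mathcal{Y}) : i(e) = \gamma\}$ via the identity. The cases $\sigma \in V(\text{Lk}_\gamma)$ and $\sigma \in V(\text{Lk}^\gamma)$ are dual, so it suffices to treat $\sigma = c \in V(\text{Lk}_\gamma)$. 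The edges out of $c$ in $\mathcal{Y}(\gamma)$ partition into three families exhibited by Definition \ref{def_Y(gamma)}: pairs $(c', d') \in E(\text{Lk}_\gamma)$ with $c'd' = c$ (sent by $h$ to $d'$), the single edge $\gamma * c$ (sent to $c$), and the edges $b * c$ with $b \in V(\text{Lk}^\gamma)$ (sent to $bc$). To establish the bijection with $\{e \in E(\mathcal{Y}) : i(e) = i(c)\}$, I classify each such $e$ according to its relationship with $\gamma$ in $\mathcal{Y}$ (namely whether $e = c$, whether $e$ extends $c$ through $\gamma$ as $e = bc$, or whether $c$ factors through $e$ as $c = c'e$) and extract in each case the unique preimage among the three families above.

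The bookkeeping for functoriality is routine but lengthy. The delicate step is the bijection in the case $\sigma \in V(\text{Lk}_\gamma)$ (and dually for $V(\text{Lk}^\gamma)$): establishing both exhaustiveness and uniqueness of preimages requires a careful analysis of the compositional structure of $\mathcal{Y}$ near $\gamma$, since the candidate preimages of a given $e \in E(\mathcal{Y})$ with $i(e) = i(c)$ are read off from a trichotomy that must be shown to cover all $e$ without overlap.
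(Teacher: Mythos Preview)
The paper does not prove this proposition; it is stated as a citation to Bridson--Haefliger and no argument is supplied. Your functoriality check is fine and matches what one would do to justify the cited claim.

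The non-degeneracy verification, however, cannot succeed as stated: the map $h$ is \emph{not} a non-degenerate morphism in general. Your own trichotomy for $\sigma = c \in V(\text{Lk}_\gamma)$ already signals the problem. Given an arbitrary edge $e \in E(\mathcal{Y})$ with $i(e)=i(c)$, there is no reason it must satisfy $e=c$, or $e=bc$ for some $b$ with $i(b)=\gamma$, or $c=c'e$ for some $c'$ with $t(c')=\gamma$; the edge $e$ may simply point away from $\gamma$ altogether. Concretely, take $\mathcal{Y}$ with vertices $\alpha,\gamma,\beta$ and exactly two edges $c\colon \alpha\to\gamma$ and $e\colon \alpha\to\beta$. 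Then $\mathcal{Y}(\gamma)$ has vertices $\{c,\gamma\}$ and a single edge $\gamma*c$, so the set of edges out of $c$ in $\mathcal{Y}(\gamma)$ has one element, while the set of edges out of $h(c)=\alpha$ in $\mathcal{Y}$ has two. Thus the bijection required by Definition~\ref{def_morphism_scwols} fails.

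What is going on is a mismatch between the paper's Definition~\ref{def_morphism_scwols}, which builds non-degeneracy into the term ``morphism of scwols,'' and the usage in Bridson--Haefliger, where a morphism of scwols is merely a functor and non-degeneracy is an additional hypothesis. In the source being cited, $h$ is asserted to be a morphism in the weaker (functorial) sense only. So the correct scope of the proof here is exactly your first half: verify that $h$ respects source, target, and composition, and stop there. The non-degeneracy step should be dropped rather than repaired.
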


In the next definition, we use $gG_{i(c)}\in \rightQ{G_\gamma}{G_{i(c)}}$ to express the coset $g\psi_c(G_{i(c)})
\in \rightQ{G_\gamma}{\psi_c(G_{i(c)})}$.

\begin{definition}\cite[Definition 4.20]{bh}\label{def_upperlinkdev}
    Let $\text{Lk}_{\tilde{\gamma}}$ be a scwol defined by
    \begin{gather*}
        V(\text{Lk}_{\tilde{\gamma}})=\{(gG_{i(c)},c)| t(c)=\gamma, gG_{i(c)}\in \rightQ{G_\gamma}{G_{i(c)}}\}
        \\E(\text{Lk}_{\tilde{\gamma}})=\{(gG_{i(d)},c,d)|(c,d)\in E^{(2)}(\mathcal{Y}), t(c)=\gamma,gG_{i(d)}\in \rightQ{G_\gamma}{G_{i(d)}}\},
    \end{gather*}
    where
    $
        i:(gG_{i(d)},c,d)\mapsto (gG_{i(d)},cd)\text{ and }
        t:(gG_{i(d)},c,d)\mapsto (g{g_{c,d}}^{-1}G_{i(c)},c)
    $, and composition is defined by
    $$(gG_{i(d)}, c,d)(gg_{cd,d'}G_{i(d')},cd,d')\mapsto(gg_{cd,d'}G_{i(d')},c,dd').$$
\end{definition}

Similarly, the scwol $\mathcal{Y}(\tilde{\gamma})$ is defined in \cite[Definition III.$\mathcal{C}$.4.21]{bh} as the join of $\text{Lk}^\gamma, \gamma$ and $\text{Lk}_{\tilde{\gamma}}$. Using these definitions, we obtain an explicit construction of $\mathcal{Y}(\tilde{\gamma})$:

\begin{definition}\label{def_localdevelopment}
    Let $G(\mathcal{Y})$ be a complex of groups over $\mathcal{Y}$.  The \textit{local development of $\gamma$}, denoted by $\mathcal{Y}(\tilde{\gamma})$, is the scwol defined by
    \begin{gather*}
    V(\mathcal{Y}(\tilde{\gamma}))=V(\text{Lk}_{\tilde{\gamma}})\sqcup \{\gamma\}\sqcup V(\text{Lk}^\gamma)\\
    E(\mathcal{Y}(\tilde{\gamma}))=E(\text{Lk}_{\tilde{\gamma}}) 
        \sqcup (\{\gamma\}\times V(\text{Lk}_{\tilde{\gamma}}))
        \sqcup (V(\text{Lk}^\gamma)\times V(\text{Lk}_{\tilde{\gamma}}))
        \sqcup (V(\text{Lk}^{\gamma})\times \{\gamma\})
        \sqcup E(\text{Lk}^{\gamma}),
    \end{gather*}
    where
    \begin{align*}
    i:E(\mathcal{Y}(\tilde{\gamma}))&\longrightarrow V(\mathcal{Y}(\tilde{\gamma}))\\
    (gG_{i(d)},c,d)&\longmapsto (gG_{i(d)},cd)
    &
    (gG_{i(d)},c,d)\in E(\text{Lk}_{\tilde{\gamma}})
    \\
    \gamma*(gG_{i(c)},c)&\longmapsto (gG_{i(c)},c)
    &
    \gamma*(gG_{i(c)},c)\in \{\gamma\}\times V(\text{Lk}_{\tilde{\gamma}})
    \\b*(gG_{i(c)},c)&\longmapsto (gG_{i(c)},c)
    &
    b*(gG_{i(c)},c)\in V(\text{Lk}^\gamma)\times V(\text{Lk}_{\tilde{\gamma}})
    \\b*\gamma&\longmapsto \gamma
    &
    b*\gamma\in V(\text{Lk}^\gamma)\times\{\gamma\}
    \\
    (a,b)&\longmapsto b
    &(a,b)\in E(\text{Lk}^\gamma)
    \end{align*}
and
\begin{align*}
    t:E(\mathcal{Y}(\tilde{\gamma}))&\longrightarrow V(\mathcal{Y}(\tilde{\gamma}))\\
    (gG_{i(d)},c,d)&\longmapsto (g{g_{c,d}}^{-1}G_{i(c)},c) &
    (gG_{i(d)},c,d)\in E(\text{Lk}_{\tilde{\gamma}})\\
    \gamma*(gG_{i(c)},c)&\longmapsto \gamma &\gamma*(gG_{i(c)},c)\in \{\gamma\}\times V(\text{Lk}_{\tilde{\gamma}})\\
    b*(gG_{i(c)},c)&\longmapsto b &b*(gG_{i(c)},c)\in V(\text{Lk}^\gamma)\times V(\text{Lk}_{\tilde{\gamma}})\\
    b*\gamma&\longmapsto b &b*\gamma\in V(\text{Lk}^\gamma)\times\{\gamma\}\\
    (a,b)&\longmapsto & ab(a,b)\in E(\text{Lk}^\gamma)
\end{align*}
    
The geometric realization of $\mathcal{Y}(\tilde{\gamma})$ is denoted by $\text{St}(\tilde{\gamma})$, while its interior is denoted by $\text{st}(\tilde{\gamma})$.

Composition on $E^{(2)}(\text{Lk}_{\tilde{\gamma}})$ and $E^{(2)}(\text{Lk}^\gamma)$ follow from Definition \ref{def_Lk}. On the remaining pairs of composable morphisms, we have:
\begin{align*}
(\gamma*(gG_{i(c)},c),(gg_{c,d}G_{i(d)},c,d))&\longmapsto \gamma*(gg_{c,d}G_{i(d)},cd)\\
(b*(gG_{i(c)},c),(gg_{c,d}G_{i(d)},c,d))&\longmapsto b*(gg_{c,d}G_{i(d)},cd)\\
((a,b),b*(gG_{i(c)},c))&\longmapsto ab*(gG_{i(c)},c)\\
(b*\gamma,\gamma*(gG_{i(c)},c))&\longmapsto b*(gG_{i(c)},c)\\
(b*\gamma,\gamma*(gG_{i(c)},c))&\longmapsto b*(gG_{i(c)},c)\\
((a,b),b*\gamma)&\longmapsto ab*\gamma
\end{align*}
\end{definition}

The next proposition is an immediate consequence of \cite[Proposition III.$\mathcal{C}$.4.11]{bh}.\begin{proposition}\label{prop_embeddingofstars}
    Let $G(\mathcal{Y})$ be a developable complex of groups over a scwol $\mathcal{Y}$.
    For each lift $\bar{\gamma}$ of $\gamma\in V(\mathcal{Y})$ in $D(\mathcal{Y},\iota_T)$, there exists a $\text{Stab}(\bar{\gamma})-$equivariant embedding given by
    \begin{gather*}
        \text{st}(\tilde{\gamma})\longrightarrow |D(\mathcal{Y},\iota_T)|\\
        \text{st}(\tilde{\gamma})\longmapsto \text{st}(\bar{\gamma}).
    \end{gather*}
\end{proposition}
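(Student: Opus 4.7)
The plan is to use the explicit description of $D(\mathcal{Y},\iota_T)$ from Theorem \ref{thm_D(Y,iota_T)} and verify that the data parametrizing $\mathcal{Y}(\tilde{\gamma})$ is exactly the data parametrizing the cells incident to $\bar{\gamma}$ in the development. Since $G(\mathcal{Y})$ is developable, $\iota_T:G_\gamma\to\pi_1(G(\mathcal{Y}),T)$ is injective and identifies $G_\gamma$ with $\mathrm{Stab}(\bar{\gamma})$ (after fixing the lift $\bar{\gamma}$). Under this identification, the left cosets $gG_{i(c)}\in G_\gamma/G_{i(c)}$ appearing in $V(\mathrm{Lk}_{\tilde{\gamma}})$ correspond bijectively to the lifts of $c\in E(\mathcal{Y})$ in $D(\mathcal{Y},\iota_T)$ that terminate at $\bar{\gamma}$, since $G_{i(c)}$ is precisely the stabilizer (in $G_\gamma$) of a chosen reference lift with terminal vertex $\bar{\gamma}$.

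First I would define the map on vertices. Send $\gamma\mapsto \bar{\gamma}$. For each $b\in V(\mathrm{Lk}^\gamma)\subset E(\mathcal{Y})$ with $i(b)=\gamma$, there is a unique lift $\bar b$ of $b$ in $D(\mathcal{Y},\iota_T)$ with $i(\bar b)=\bar{\gamma}$; send $b\mapsto t(\bar b)$. For $(gG_{i(c)},c)\in V(\mathrm{Lk}_{\tilde{\gamma}})$, let $\bar c$ be the chosen reference lift of $c$ with $t(\bar c)=\bar{\gamma}$, and send $(gG_{i(c)},c)\mapsto g\cdot i(\bar c)$; well-definedness with respect to the coset representative is immediate because $G_{i(c)}$ stabilizes $i(\bar c)$. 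Then I extend to edges in each of the five pieces of $E(\mathcal{Y}(\tilde{\gamma}))$ by selecting the corresponding lifted edges, the key case being $(gG_{i(d)},c,d)\in E(\mathrm{Lk}_{\tilde{\gamma}})$: here the target rule $(gG_{i(d)},c,d)\mapsto (gg_{c,d}^{-1}G_{i(c)},c)$ matches the relation $c^+d^+=g_{c,d}(cd)^+$ in $\pi_1(G(\mathcal{Y}),T)$ (Definition \ref{def_pi1_presentation}), which is exactly the identity required for the lifts $\overline{cd}$ and $\bar c\circ\bar d$ to agree up to the $g_{c,d}$-correction. Compatibility with sources and targets in the remaining cases is read off from Definition \ref{def_localdevelopment}.

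Next I would check that the resulting simplicial map $\mathcal{Y}(\tilde{\gamma})\to D(\mathcal{Y},\iota_T)$ is injective on interiors of simplices and that its geometric realization restricts to a bijection $\mathrm{st}(\tilde{\gamma})\to \mathrm{st}(\bar{\gamma})$: every simplex of $|D(\mathcal{Y},\iota_T)|$ containing $\bar{\gamma}$ in its open star comes from a chain of edges one of whose endpoints is a lift of $\gamma$ equal to $\bar{\gamma}$, and such chains are in bijection with the data $b,\gamma,(gG_{i(c)},c),(a,b),(gG_{i(d)},c,d)$ enumerated in $\mathcal{Y}(\tilde{\gamma})$. Equivariance under $\mathrm{Stab}(\bar{\gamma})=G_\gamma$ is automatic from the construction: the action of $h\in G_\gamma$ permutes the lifts terminating at $\bar{\gamma}$ by left multiplication on the coset parameter, which is exactly the natural action of $G_\gamma$ on $V(\mathrm{Lk}_{\tilde{\gamma}})$ and on edges of the join.

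The main obstacle I expect is bookkeeping the twisting cocycle consistently across the upper-link edges and the mixed edges of type $b*(gG_{i(c)},c)$. Concretely, one has to confirm that the $g_{c,d}$-twist built into the definition of $t$ on $E(\mathrm{Lk}_{\tilde{\gamma}})$ agrees with the $\pi_1$-relations $a^+b^+=g_{a,b}(ab)^+$ when rewriting compositions of lifted edges ending at $\bar{\gamma}$; this is the content of the cocycle condition in Definition \ref{def_CoG}(3)(b) and is precisely what makes $D(\mathcal{Y},\iota_T)$ a well-defined scwol. Once this cocycle-check is made, injectivity on $\mathrm{st}(\tilde{\gamma})$ and equivariance follow directly, and the proposition reduces to \cite[Proposition III.$\mathcal{C}$.4.11]{bh} as claimed.
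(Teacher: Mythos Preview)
Your approach is correct, but you should know that the paper does not actually give a proof of this proposition: it is stated there as ``an immediate consequence of \cite[Proposition III.$\mathcal{C}$.4.11]{bh}'' and left at that. What you have written is essentially an unpacking of that Bridson--Haefliger reference using the explicit descriptions of $\mathcal{Y}(\tilde{\gamma})$ and $D(\mathcal{Y},\iota_T)$ laid out in Section~\ref{sect_localdev}. The outline is sound: uniqueness of lifts on the lower link follows from the axiom that stabilizers of initial vertices stabilize edges, the coset parametrization of the upper link matches the vertex set of the development by injectivity of $(\iota_T)_\gamma$, and the $g_{c,d}$-twist in the target map on $E(\mathrm{Lk}_{\tilde{\gamma}})$ is exactly the relation $c^+d^+=g_{c,d}(cd)^+$ governing how edge lifts compose. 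Your flagged ``main obstacle'' is the only genuine verification needed, and you have identified the right tool for it. So your route is a genuine direct proof where the paper simply cites; the trade-off is that you do the bookkeeping that Bridson--Haefliger already did, while the paper gets the result for free.
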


\begin{proposition}\cite[Proposition III.$\mathcal{C}$.4.23]{bh}\label{prop_morphismlocaldevelopments}
    A morphism of complexes of groups $$\phi:H(\mathcal{Y})\longrightarrow G(\mathcal{X})$$ over a $\phi_\sigma-$equivariant morphism of scwols $f:\mathcal{Y}\to \mathcal{X}$ induces, for each $\sigma\in V(\mathcal{Y})$, a morphism 
    $$
        \Phi_\sigma: \mathcal{Y}(\tilde{\sigma})\longrightarrow \mathcal{X}(\widetilde{f(\sigma)})$$
    of local developments.
\end{proposition}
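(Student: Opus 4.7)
The plan is to construct $\Phi_\sigma$ explicitly on the vertices and edges of $\mathcal{Y}(\tilde{\sigma})$ following the decomposition in Definition \ref{def_localdevelopment}, and then verify the scwol-morphism axioms. On the apex set $\sigma \mapsto f(\sigma)$; on each $b \in V(\text{Lk}^\sigma)$ set $b \mapsto f(b)$; and on $(gG_{i(c)}, c) \in V(\text{Lk}_{\tilde{\sigma}})$ set
\[\Phi_\sigma(gG_{i(c)}, c) \;=\; \bigl(\phi_\sigma(g)\phi(c)\,G_{f(i(c))},\, f(c)\bigr),\]
with the convention from just before Definition \ref{def_upperlinkdev} that the subscript $G_{i(c)}$ abbreviates $\psi_c(H_{i(c)})$ upstairs and $\psi_{f(c)}(G_{f(i(c))})$ downstairs. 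On edges, the values on the join pieces $\gamma*(\cdots)$, $b*(\cdots)$, $b*\gamma$, and on $E(\text{Lk}^\sigma)$ are essentially forced by $f$ and the vertex assignment; on $(gG_{i(d)},c,d) \in E(\text{Lk}_{\tilde{\sigma}})$ I take
\[\Phi_\sigma(gG_{i(d)}, c, d) \;=\; \bigl(\phi_\sigma(g)\phi(cd)\,G_{f(i(d))},\, f(c),\, f(d)\bigr).\]

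The first step is checking well-definedness on cosets. Replacing $g$ by $g\,\psi_c(h)$ for $h\in H_{i(c)}$ and applying compatibility condition (1) of Definition \ref{morphism_CoG} gives
\[\phi_\sigma(\psi_c(h))\phi(c) \;=\; \phi(c)\,\psi_{f(c)}(\phi_{i(c)}(h)),\]
so the new representative $\phi_\sigma(g\,\psi_c(h))\phi(c)$ differs from $\phi_\sigma(g)\phi(c)$ by right multiplication by an element of $\psi_{f(c)}(G_{f(i(c))})$. The same computation handles well-definedness of the edge formula.

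The main step is target-map compatibility on $(gG_{i(d)}, c, d)$, the one place where both axioms of Definition \ref{morphism_CoG} must be invoked simultaneously. The target of this edge in $\mathcal{Y}(\tilde{\sigma})$ is $(g\, g_{c,d}^{-1}\, G_{i(c)},\, c)$, which $\Phi_\sigma$ sends to $(\phi_\sigma(g)\,\phi_\sigma(g_{c,d})^{-1}\,\phi(c)\,G_{f(i(c))},\, f(c))$; meanwhile, the target of the image edge is $(\phi_\sigma(g)\,\phi(cd)\, g_{f(c),f(d)}^{-1}\, G_{f(i(c))},\, f(c))$. Equality of these cosets reduces, via cocycle condition (2) of Definition \ref{morphism_CoG} used to rewrite $\phi_\sigma(g_{c,d})\phi(cd) = \phi(c)\,\psi_{f(c)}(\phi(d))\, g_{f(c),f(d)}$, to the assertion that $\psi_{f(c)}(\phi(d)) \in \psi_{f(c)}(G_{f(i(c))})$. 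This holds because $\phi(d) \in G_{t(f(d))} = G_{f(i(c))}$, using $t(d) = i(c)$ and the fact that $f$ is a scwol morphism.

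The remaining verifications are routine. Source-map compatibility and preservation of composition on each of the five types of composable pair listed in Definition \ref{def_localdevelopment} reduce either to functoriality of $f$ or to the same coset identity just used. Non-degeneracy of $\Phi_\sigma$ reduces, one vertex type at a time, to non-degeneracy of $f$ together with the tautological bijections built into $\mathcal{Y}(\tilde{\sigma})$ and $\mathcal{X}(\widetilde{f(\sigma)})$ by the join decomposition. The main obstacle will be the target-map calculation on $E(\text{Lk}_{\tilde{\sigma}})$ above: it demands simultaneous use of the adjoint relation (1) of Definition \ref{morphism_CoG} (to shuffle $\phi_\sigma(g_{c,d})$ past $\phi(cd)$) and the cocycle relation (2), along with careful bookkeeping of which subgroup one is quotienting by on each side.
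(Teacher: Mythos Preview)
Your proposal is correct. Note, however, that the paper does not supply its own proof of this proposition: it is quoted directly from \cite[Proposition III.$\mathcal{C}$.4.23]{bh}, and the paper only records the explicit formula for $\Phi_\sigma$ on $\text{Lk}_{\tilde{\sigma}}$ in the paragraph immediately following the statement. Your vertex and edge formulas agree with those recorded there (up to the harmless notational difference $G_{f(i(c))}$ versus $G_{i(f(c))}$), and your verification of well-definedness on cosets and of target-map compatibility via conditions (1) and (2) of Definition~\ref{morphism_CoG} is exactly the computation one must do. So you have in fact written out more than the paper does.
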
 
In particular, on $\text{Lk}_{\tilde{\sigma}}$, the morphism is described as:
\begin{align*}
    (h\psi_c(H_{i(c)}),c)&\longmapsto (\phi_{\sigma}(h)\phi(c)(G_{i(f(c))}), f(c)) \\
    (h\psi_{cd}(H_{i(cd)}),c, d)&\longmapsto (\phi_{\sigma}(h)\phi(cd)(G_{i(f(cd))}), f(c), f(d))
\end{align*}
where $(h\psi_c(H_{i(c)}),c)\in V(\text{Lk}_{\tilde{\sigma}})$ and  $(h\psi_{cd}(H_{i(cd)}),c, d) \in E(\text{Lk}_{\tilde{\sigma}})$.

\subsection{Metrizing a complex of groups}\label{section_metrize}
We recall from \cite{bh} the process of metrizing a complex of groups and an important developability theorem for non-positively curved complexes of groups.

\begin{definition}\cite[Definition I.7.37]{bh}
    We \textit{metrize} a polyhedral complex $K$ by taking,
    for each $n-$simplex $\triangle$ of $K$, a geodesic $n-$simplex in a model space of curvature $\leq \kappa$ such that: 
\begin{enumerate}
    \item the interior of each face of $\triangle$ maps injectively into $K$, and
    \item any other geodesic $n-$simplex intersecting $\triangle$ in $K$ non-trivially (on a unique face) is isometric to $\triangle$ on this face.
\end{enumerate}
The metrized polyhedral complex is said to be an \textit{$M_\kappa-$polyhedral complex}.
\end{definition}

Let $\mathcal{Y}$ be a scwol. Then its geometric realization $|\mathcal{Y}|$ is a polyhedral complex. For such polyhedral complexes, we impose the following assumption:
\begin{assumption}\label{assumption_Shapes} On the geometric realization $|\mathcal{Y}|$ of a scwol $\mathcal{Y}$,
the set of isometry classes Shapes($|\mathcal{Y}|$) of the faces of geodesic simplices is finite.\end{assumption}

\begin{remark}\label{remark_metrized_morphism}Let $f:\mathcal{Y}\to \mathcal{X}$ be a non-degenerate morphism of scwols and $|\mathcal{X}|$ be an $M_\kappa-$polyhedral complex. Recall by non-degeneracy of $f$ that $|f|: |\mathcal{Y}| \to |\mathcal{X}|$ is an affine homeomorphism on the interiors of cells of $|\mathcal{Y}|$. By defining the metric on $|\mathcal{Y}|$ as the pullback of the metric on $|\mathcal{X}|$, we say that the map $|f|: |\mathcal{Y}| \to |\mathcal{X}|$ is a local isometry on the interiors of cells, hence, $|\mathcal{Y}|$ is an $M_\kappa-$polyhedral complex.
\end{remark}

Let $\sigma\in V(\mathcal{Y})$ and let $G(\mathcal{Y})$ be a scwol over $\mathcal{Y}$. By Remark \ref{remark_metrized_morphism}, the maps
\begin{gather*}
    |f_\sigma|:\text{St}(\sigma)\longrightarrow |\mathcal{Y}| \text{ and } \text{St}(\tilde{\sigma}) \longrightarrow \text{St}(\sigma)
\end{gather*}
are local isometries on interiors of cells. Thus, finiteness of $\text{Shapes}(|\mathcal{Y}|)$ implies finiteness of $\text{Shapes}(\text{St}(\tilde{\sigma}))$ for all $\sigma\in v(\mathcal{Y})$.

\begin{definition}\cite[Definition III.\(\mathcal{C}\).4.16]{bh}\label{def_npcCoG}
    A complex of groups $G(\mathcal{Y})$ is \textit{non-positively curved} if for all $\sigma\in V(\mathcal{Y})$, $\text{st}(\tilde{\sigma})$ is non-positively curved.
\end{definition}

\begin{theorem}\cite[Theorem III.$\mathcal{C}$.4.17]{bh}
\label{thm_NPC}
A non-positively curved complex of groups is developable.
\end{theorem}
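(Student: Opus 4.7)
The plan is to verify the developability criterion (Theorem \ref{thm_developability_criterion}) applied to the natural morphism $\iota_T:G(\mathcal{Y})\to \pi_1(G(\mathcal{Y}),T)$ of Remark \ref{remark_naturalmorphismtopi1}. Equivalently, by Corollary \ref{cor:thm_localCoG}, it suffices to show that for every $\gamma\in V(\mathcal{Y})$ the functorial morphism $\Sigma:L(\mathcal{Y}(\gamma))\to G(\mathcal{Y})$ of Construction \ref{construction_morphismfunctorial} is $\pi_1$-injective. Since Theorem \ref{thm_localCoG} identifies $\pi_1(L(\mathcal{Y}(\gamma)))$ with the local group $G_\gamma$, this reduces the problem to exhibiting an injection $G_\gamma\hookrightarrow \pi_1(G(\mathcal{Y}),T)$, which is exactly the local injectivity required by the criterion.

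To produce such an injection, I would construct a simply connected polyhedral complex $\widetilde{X}$ equipped with an action of $\pi_1(G(\mathcal{Y}),T)$ whose stabilizers realize the local groups. First I would form a space $X$ by gluing the closed local developments $\text{St}(\tilde{\gamma})$ along their overlapping stars, using the homomorphisms $\psi_a$ and twisting elements $g_{a,b}$ of Definition \ref{def_CoG} to prescribe the identifications; the cocycle conditions guarantee that the gluing descends to a well-defined polyhedral complex mapping onto $|\mathcal{Y}|$. By construction, the link of each vertex of $X$ is isometric to a link in some $\text{St}(\tilde{\gamma})$, so the non-positive curvature hypothesis on $\text{st}(\tilde{\gamma})$ (Definition \ref{def_npcCoG}) metrizes $X$ as a locally CAT(0) space. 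Passing to the universal cover $\widetilde{X}$, which is again locally CAT(0) and now simply connected, the Cartan--Hadamard theorem for $M_\kappa$-polyhedral complexes promotes $\widetilde{X}$ to a genuinely CAT(0) (hence contractible) space.

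The deck group $\pi_1(X)$ acts on $\widetilde{X}$ by cellular isometries. Because the star of every lift of $\gamma$ is locally isomorphic to $\text{St}(\tilde{\gamma})$, Proposition \ref{prop_embeddingofstars} shows the stabilizer of such a lift is exactly $G_\gamma$. Identifying $\pi_1(X)$ with $\pi_1(G(\mathcal{Y}),T)$ through the complex of groups presented by this action (via Theorem \ref{thm_D(Y,iota_T)}), the inclusion $G_\gamma\hookrightarrow \pi_1(G(\mathcal{Y}),T)$ is realized as a stabilizer inclusion and is therefore injective, so $\iota_T$ is injective on every local group and $G(\mathcal{Y})$ is developable.

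The main obstacle is the middle step: making the gluing of local developments into $X$ genuinely well-defined and then identifying the resulting fundamental group with $\pi_1(G(\mathcal{Y}),T)$. The delicate points are that the cocycle identities from Definition \ref{def_CoG}(3) must be shown to ensure that triple overlaps match consistently, that $\text{Shapes}(|\mathcal{Y}|)$ being finite (Assumption \ref{assumption_Shapes}) passes to $X$ and $\widetilde{X}$ so that Cartan--Hadamard applies, and that the complex of groups associated to the $\pi_1(X)$-action on $\widetilde{X}$ canonically coincides with $G(\mathcal{Y})$ up to the coboundaries of Construction \ref{coboundary}. Once this bookkeeping is handled, the $\pi_1$-injectivity of the stabilizer inclusions is automatic from the freeness of the deck action on the cellulation of $\widetilde{X}$ relative to the local structure.
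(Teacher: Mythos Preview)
The paper does not supply its own proof of Theorem~\ref{thm_NPC}; it simply records the statement and points to \cite[III.$\mathcal{G}$]{bh}, where Bridson--Haefliger establish it via the groupoid of local isometries. So there is no in-paper argument to match against, only the remark that the accepted proof passes through an auxiliary \'etale-groupoid construction rather than a direct gluing of stars.

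Your outline is in the right spirit, but the step you flag as ``bookkeeping'' is in fact the entire content of the theorem. Two concrete issues. First, you invoke Proposition~\ref{prop_embeddingofstars} to identify stabilizers in $\widetilde{X}$ with the $G_\gamma$; that proposition is stated for a \emph{developable} complex of groups and its development $D(\mathcal{Y},\iota_T)$, so appealing to it here is circular---you would need an independent computation of the point stabilizers in $\widetilde{X}$. Second, the identification $\pi_1(X)\cong\pi_1(G(\mathcal{Y}),T)$ is not a formality: without already knowing that the $G_\gamma$ inject into $\pi_1(G(\mathcal{Y}),T)$, you have no guarantee that the complex of groups arising from the $\pi_1(X)$-action on $\widetilde{X}$ agrees with $G(\mathcal{Y})$ even up to coboundary, because the comparison map on local groups could have kernel. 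This is exactly the obstruction the groupoid machinery in \cite[III.$\mathcal{G}$]{bh} is designed to bypass: it produces a simply connected CAT(0) space together with the correct action directly from the local isometry data, without presupposing the injectivity you are trying to establish.
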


This result was proven in \cite[III.$\mathcal{G}$]{bh} using the language of groupoids of local isometries. It is the analog of the Cartan--Hadamard theorem for complexes of groups. In particular, by \cite[Proposition III.$\mathcal{C}$.4.1]{bh}, $\text{St}(\tilde{\gamma})$ embeds into $|D(\mathcal{Y},\iota_T)|$ for all $\gamma\in V(\mathcal{Y})$. Hence, $|D(\mathcal{Y},\iota_T)|$ is non-positively curved. Moreover, Theorem \ref{thm_D(Y,iota_T)} tells us that $|D(\mathcal{Y},\iota_T)|$ is simply connected, hence, it is CAT(0).

\section{Local complexes of groups}
\label{sect_localCoG}
In this section, we define a \textit{local complex of groups} and offer a functorial perspective of the statement ``every complex of groups is locally developable" \cite[III.$\mathcal{C}$, p.520]{bh}. Our approach is to construct a complex of groups from the local data of a given complex of groups (see Definition \ref{def_localCoG} and Proposition \ref{prop_localisCoG}). In Theorem \ref{thm_localCoG}, we show that a local complex of groups reflects the local data of a complex of groups. In particular, a local complex of groups is always developable and its development is isomorphic to the local development\footnote{We refer the reader to Definition \ref{def_localdevelopment} for the construction of a local development.}. This is the content of Propositions \ref{prop_localdevelopability} and Proposition \ref{prop_development_localCoG}. 
Lastly, we prove a functorial version of the developability criterion in Corollary \ref{cor:thm_localCoG}. 

For the rest of this section, we fix a complex of groups $G(\mathcal{Y})$ over a scwol $\mathcal{Y}$ and $\gamma\in V(\mathcal{Y})$. Recall the definition of the scwol $\mathcal{Y}(\gamma)$ from Definition \ref{def_Y(gamma)}.\begin{definition}\label{def_localCoG}
    Given a complex of groups $G(\mathcal{Y})=(G_\sigma,\psi_a, g_{a,b})$ over $\mathcal{Y}$, the \textit{local complex of group over $\gamma$}, $L(\mathcal{Y}(\gamma))$, is the following set of data over $\mathcal{Y}(\gamma)$:\begin{enumerate}
        \item Over each object in $V(\mathcal{Y}(\gamma))$, local groups
        
        $L_-=\begin{cases}
            G_{i(c)} &c\in V(\text{Lk}_\gamma)\\
            G_\gamma &\gamma\in \{\gamma\}\\
            G_\gamma &b\in V(\text{Lk}^\gamma)\\
        \end{cases}$
        \item Over each morphism in $E(\mathcal{Y}(\gamma))$, injective homomorphisms
        
        $\lambda_-=\begin{cases}
            \psi_d &(c,d)\in E(\text{Lk}_\gamma)\\
            \psi_c &\gamma*c\in \{\gamma\}\times V(\text{Lk}_\gamma)\\
            \psi_c &b*c\in V(\text{Lk}^\gamma)\times V(\text{Lk}_\gamma)\\
            \text{id}_{G_\gamma} &b*\gamma\in V(\text{Lk}^\gamma)\times\{\gamma\}\\
            \text{id}_{G_\gamma} &(a,b)\in E(\text{Lk}^\gamma)
        \end{cases}$
        \item Associated to each pair of morphisms in $E^{(2)}(\mathcal{Y}(\gamma))$, twisting elements
        
        $l_{-,-}=\begin{cases}
            g_{d_1,d_2} &((c_1,d_1),(c_2,d_2))\in E^{(2)}(\text{Lk}_\gamma)\\
            g_{c,d} &(\gamma*c, (c,d))\in (\{\gamma\}\times V(\text{Lk}_\gamma))\times E(\text{Lk}_\gamma)\\
            g_{c,d} &(b*c,(c,d))\in (V(\text{Lk}^\gamma)\times V(\text{Lk}_\gamma))\times E(\text{Lk}_\gamma)\\
            e &((a,b),b*c)\in E(\text{Lk}^\gamma)\times(V(\text{Lk}^\gamma)\times V(\text{Lk}_\gamma)) \\
            e &(b*\gamma, \gamma*c)\in (V(\text{Lk}^\gamma)\times \{\gamma\})\times ( \{\gamma\}\times V(\text{Lk}_\gamma))\\
            e &((a,b),b*\gamma)\in E(\text{Lk}^\gamma*\gamma)\times (V(\text{Lk}^\gamma)\times \{\gamma\})\\
            e &((a_1,b_1),(a_2,b_2))\in E^{(2)}(\text{Lk}_\gamma)
        \end{cases}$
    \end{enumerate}
\end{definition}
The next figure illustrates a local complex of groups using the local groups and injective homomorphisms of $G(\mathcal{Y})$.
\begin{center}
\tikzset{every picture/.style={line width=0.75pt}} 

\begin{tikzpicture}[x=0.75pt,y=0.75pt,yscale=-1,xscale=1, scale = 1.2]

\draw [color={rgb, 255:red, 14; green, 75; blue, 147 }  ,draw opacity=1 ]   (215.09,163.33) -- (315.89,260.22) ;
\draw [shift={(317.33,261.6)}, rotate = 223.87] [color={rgb, 255:red, 14; green, 75; blue, 147 }  ,draw opacity=1 ][line width=0.75]    (10.93,-4.9) .. controls (6.95,-2.3) and (3.31,-0.67) .. (0,0) .. controls (3.31,0.67) and (6.95,2.3) .. (10.93,4.9)   ;
\draw [color={rgb, 255:red, 14; green, 75; blue, 147 }  ,draw opacity=1 ]   (318.77,260.22) -- (419.27,163.67) ;
\draw [shift={(317.33,261.6)}, rotate = 316.15] [color={rgb, 255:red, 14; green, 75; blue, 147 }  ,draw opacity=1 ][line width=0.75]    (10.93,-4.9) .. controls (6.95,-2.3) and (3.31,-0.67) .. (0,0) .. controls (3.31,0.67) and (6.95,2.3) .. (10.93,4.9)   ;
\draw [color={rgb, 255:red, 240; green, 107; blue, 0 }  ,draw opacity=1 ]   (216.83,359.94) -- (317.33,263.4) ;
\draw [shift={(215.39,361.33)}, rotate = 316.15] [color={rgb, 255:red, 240; green, 107; blue, 0 }  ,draw opacity=1 ][line width=0.75]    (10.93,-4.9) .. controls (6.95,-2.3) and (3.31,-0.67) .. (0,0) .. controls (3.31,0.67) and (6.95,2.3) .. (10.93,4.9)   ;
\draw [color={rgb, 255:red, 240; green, 107; blue, 0 }  ,draw opacity=1 ]   (317.33,263.4) -- (418.13,360.29) ;
\draw [shift={(419.57,361.67)}, rotate = 223.87] [color={rgb, 255:red, 240; green, 107; blue, 0 }  ,draw opacity=1 ][line width=0.75]    (10.93,-3.29) .. controls (6.95,-1.4) and (3.31,-0.3) .. (0,0) .. controls (3.31,0.3) and (6.95,1.4) .. (10.93,3.29)   ;
\draw [color={rgb, 255:red, 0; green, 0; blue, 0 }  ,draw opacity=1 ]   (317.33,261.6) ;
\draw [shift={(317.33,261.6)}, rotate = 0] [color={rgb, 255:red, 0; green, 0; blue, 0 }  ,draw opacity=1 ][fill={rgb, 255:red, 0; green, 0; blue, 0 }  ,fill opacity=1 ][line width=0.75]      (0, 0) circle [x radius= 3.35, y radius= 3.35]   ;
\draw [color={rgb, 255:red, 208; green, 2; blue, 27 }  ,draw opacity=1 ]   (217.09,163.33) -- (419.27,163.67) ;
\draw [shift={(215.09,163.33)}, rotate = 0.1] [color={rgb, 255:red, 208; green, 2; blue, 27 }  ,draw opacity=1 ][line width=0.75]    (10.93,-4.9) .. controls (6.95,-2.3) and (3.31,-0.67) .. (0,0) .. controls (3.31,0.67) and (6.95,2.3) .. (10.93,4.9)   ;
\draw [color={rgb, 255:red, 14; green, 75; blue, 147 }  ,draw opacity=1 ]   (419.27,163.67) ;
\draw [shift={(419.27,163.67)}, rotate = 0] [color={rgb, 255:red, 14; green, 75; blue, 147 }  ,draw opacity=1 ][fill={rgb, 255:red, 14; green, 75; blue, 147 }  ,fill opacity=1 ][line width=0.75]      (0, 0) circle [x radius= 3.35, y radius= 3.35]   ;
\draw [color={rgb, 255:red, 14; green, 75; blue, 147 }  ,draw opacity=1 ]   (215.09,163.33) ;
\draw [shift={(215.09,163.33)}, rotate = 0] [color={rgb, 255:red, 14; green, 75; blue, 147 }  ,draw opacity=1 ][fill={rgb, 255:red, 14; green, 75; blue, 147 }  ,fill opacity=1 ][line width=0.75]      (0, 0) circle [x radius= 3.35, y radius= 3.35]   ;
\draw [color={rgb, 255:red, 240; green, 107; blue, 0 }  ,draw opacity=1 ]   (419.57,361.67) ;
\draw [shift={(419.57,361.67)}, rotate = 0] [color={rgb, 255:red, 240; green, 107; blue, 0 }  ,draw opacity=1 ][fill={rgb, 255:red, 240; green, 107; blue, 0 }  ,fill opacity=1 ][line width=0.75]      (0, 0) circle [x radius= 3.35, y radius= 3.35]   ;
\draw [color={rgb, 255:red, 65; green, 117; blue, 5 }  ,draw opacity=1 ]   (215.39,361.33) -- (417.57,361.67) ;
\draw [shift={(419.57,361.67)}, rotate = 180.1] [color={rgb, 255:red, 65; green, 117; blue, 5 }  ,draw opacity=1 ][line width=0.75]    (10.93,-4.9) .. controls (6.95,-2.3) and (3.31,-0.67) .. (0,0) .. controls (3.31,0.67) and (6.95,2.3) .. (10.93,4.9)   ;
\draw [color={rgb, 255:red, 240; green, 107; blue, 0 }  ,draw opacity=1 ]   (215.39,361.33) ;
\draw [shift={(215.39,361.33)}, rotate = 0] [color={rgb, 255:red, 240; green, 107; blue, 0 }  ,draw opacity=1 ][fill={rgb, 255:red, 240; green, 107; blue, 0 }  ,fill opacity=1 ][line width=0.75]      (0, 0) circle [x radius= 3.35, y radius= 3.35]   ;
\draw [color={rgb, 255:red, 128; green, 128; blue, 128 }  ,draw opacity=1 ]   (213.07,163.34) -- (215.36,359.33) ;
\draw [shift={(215.39,361.33)}, rotate = 269.33] [color={rgb, 255:red, 128; green, 128; blue, 128 }  ,draw opacity=1 ][line width=0.75]    (10.93,-3.29) .. controls (6.95,-1.4) and (3.31,-0.3) .. (0,0) .. controls (3.31,0.3) and (6.95,1.4) .. (10.93,3.29)   ;
\draw [color={rgb, 255:red, 128; green, 128; blue, 128 }  ,draw opacity=1 ]   (419.27,163.67) -- (421.56,359.66) ;
\draw [shift={(421.58,361.66)}, rotate = 269.33] [color={rgb, 255:red, 128; green, 128; blue, 128 }  ,draw opacity=1 ][line width=0.75]    (10.93,-3.29) .. controls (6.95,-1.4) and (3.31,-0.3) .. (0,0) .. controls (3.31,0.3) and (6.95,1.4) .. (10.93,3.29)   ;
\draw [color={rgb, 255:red, 128; green, 128; blue, 128 }  ,draw opacity=1 ]   (215.09,163.33) .. controls (297.71,190.52) and (378.75,270.54) .. (420.94,358.21) ;
\draw [shift={(421.57,359.53)}, rotate = 244.53] [color={rgb, 255:red, 128; green, 128; blue, 128 }  ,draw opacity=1 ][line width=0.75]    (10.93,-3.29) .. controls (6.95,-1.4) and (3.31,-0.3) .. (0,0) .. controls (3.31,0.3) and (6.95,1.4) .. (10.93,3.29)   ;
\draw [color={rgb, 255:red, 128; green, 128; blue, 128 }  ,draw opacity=1 ]   (417.25,161.56) .. controls (323.14,204.8) and (263.07,261.82) .. (216.09,359.85) ;
\draw [shift={(215.39,361.33)}, rotate = 295.45] [color={rgb, 255:red, 128; green, 128; blue, 128 }  ,draw opacity=1 ][line width=0.75]    (10.93,-3.29) .. controls (6.95,-1.4) and (3.31,-0.3) .. (0,0) .. controls (3.31,0.3) and (6.95,1.4) .. (10.93,3.29)   ;

\draw (199.37,133.31) node [anchor=north west][inner sep=0.75pt]  [color={rgb, 255:red, 14; green, 75; blue, 147 }  ,opacity=1 ,rotate=-0.59]  {$G_{i( c)}$};
\draw (401.56,133.64) node [anchor=north west][inner sep=0.75pt]  [color={rgb, 255:red, 14; green, 75; blue, 147 }  ,opacity=1 ,rotate=-0.59]  {$G_{i( cd)}$};
\draw (203.47,369.95) node [anchor=north west][inner sep=0.75pt]  [color={rgb, 255:red, 65; green, 117; blue, 5 }  ,opacity=1 ,rotate=-0.59]  {$G_{\gamma }$};
\draw (308.78,138.51) node [anchor=north west][inner sep=0.75pt]  [color={rgb, 255:red, 208; green, 2; blue, 27 }  ,opacity=1 ,rotate=-0.59]  {$\psi _{d}$};
\draw (310.45,366.31) node [anchor=north west][inner sep=0.75pt]  [color={rgb, 255:red, 65; green, 117; blue, 5 }  ,opacity=1 ,rotate=-0.59]  {$\text{id}$};
\draw (259.15,223.68) node [anchor=north west][inner sep=0.75pt]  [color={rgb, 255:red, 14; green, 75; blue, 147 }  ,opacity=1 ,rotate=-0.59]  {$\psi _{c}$};
\draw (359.43,222.93) node [anchor=north west][inner sep=0.75pt]  [color={rgb, 255:red, 14; green, 75; blue, 147 }  ,opacity=1 ,rotate=-0.59]  {$\psi _{cd}$};
\draw (268.32,315.78) node [anchor=north west][inner sep=0.75pt]  [color={rgb, 255:red, 240; green, 107; blue, 0 }  ,opacity=1 ,rotate=-0.59]  {$\text{id}$};
\draw (194.12,254.81) node [anchor=north west][inner sep=0.75pt]  [color={rgb, 255:red, 128; green, 128; blue, 128 }  ,opacity=1 ]  {$\psi_{c}$};
\draw (356.32,316.78) node [anchor=north west][inner sep=0.75pt]  [color={rgb, 255:red, 240; green, 107; blue, 0 }  ,opacity=1 ,rotate=-0.59]  {$\text{id}$};
\draw (409.47,367.95) node [anchor=north west][inner sep=0.75pt]  [color={rgb, 255:red, 65; green, 117; blue, 5 }  ,opacity=1 ,rotate=-0.59]  {$G_{\gamma }$};
\draw (422.12,254.81) node [anchor=north west][inner sep=0.75pt]  [color={rgb, 255:red, 128; green, 128; blue, 128 }  ,opacity=1 ]  {$\psi _{cd}$};
\draw (307.47,271.95) node [anchor=north west][inner sep=0.75pt]  [color={rgb, 255:red, 0; green, 0; blue, 0 }  ,opacity=1 ,rotate=-0.59]  {$G_{\gamma }$};

\end{tikzpicture}

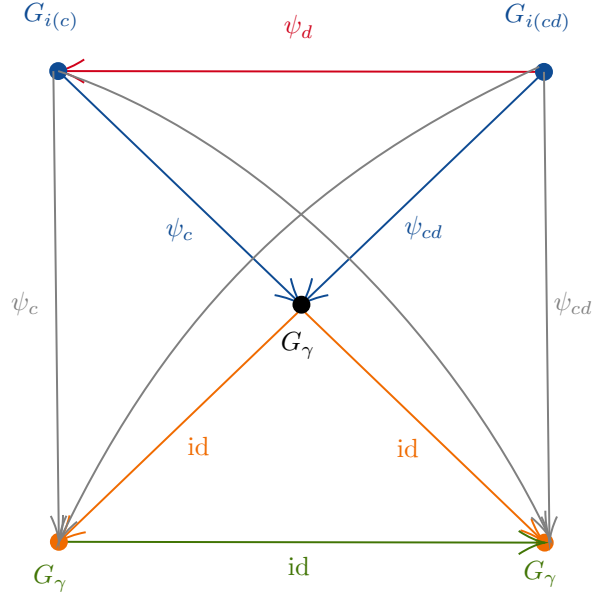
\captionof{figure}{The local complex of groups $L(\mathcal{Y}(\gamma))$.}
\label{fig:localCoG}
\end{center}

\begin{proposition}\label{prop_localisCoG}
    The \textit{local complex of group over $\gamma$} is a complex of groups.
\end{proposition}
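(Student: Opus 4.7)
The plan is to directly check the three-part axiom of Definition~\ref{def_CoG} for the data $(L_\sigma, \lambda_a, l_{a,b})$ specified in Definition~\ref{def_localCoG}. Because every ingredient is either inherited verbatim from $G(\mathcal{Y})$ or is the identity on $G_\gamma$, the argument is largely a matter of bookkeeping organized by the five types of morphisms of $\mathcal{Y}(\gamma)$ and the configurations of composable pairs and triples they generate. The preliminary checks are immediate: reading off $i(a)$ and $t(a)$ in each of the five cases from Definition~\ref{def_Y(gamma)}, the prescribed $\lambda_a$ is always either one of the $\psi_e$ from $G(\mathcal{Y})$ (hence injective) or $\mathrm{id}_{G_\gamma}$, with source and target matching $L_{i(a)}$ and $L_{t(a)}$; likewise, each twisting element $l_{a,b}$ is either $e\in G_\gamma$ or an original $g_{e,f}\in G_{t(e)}$, and since the relevant $t(e)$ is $\gamma$ or $i(c)$, the element lies in $L_{t(a)}$.

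Next, I would verify the two cocycle axioms. A short enumeration shows that $E^{(2)}(\mathcal{Y}(\gamma))$ is a disjoint union of seven types of composable pairs and $E^{(3)}(\mathcal{Y}(\gamma))$ a disjoint union of nine types of composable triples. For condition~(a), in every pair where both homomorphisms are of ``$\psi$-type'' the identity reduces to the cocycle $\mathrm{Ad}(g_{e,f})\psi_{ef}=\psi_e\psi_f$ of $G(\mathcal{Y})$; in the remaining pairs at least one of the $\lambda$'s is $\mathrm{id}_{G_\gamma}$ and the twisting element is $e$, so the identity becomes trivial. For condition~(b), three of the nine triples are of ``$\psi$-type'' throughout and reduce directly to condition~(b) of $G(\mathcal{Y})$ applied to the underlying triple of $\mathcal{Y}$-morphisms; the remaining six triples involve at least one identity homomorphism and at least one $e$-twist, and both sides of the cocycle collapse to the same product of inherited elements.

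The main obstacle is not mathematical depth but combinatorial care: one must carefully track the source and target of each morphism type in $\mathcal{Y}(\gamma)$ to enumerate all seven pair-types and nine triple-types correctly, and keep the ambient local group containing each twisting element aligned with $L_{t(a)}$. Once the enumeration is pinned down, each verification either invokes a cocycle identity already holding in $G(\mathcal{Y})$ or collapses to a trivial equation, so no new identity is required.
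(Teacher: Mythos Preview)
Your proposal is correct and follows essentially the same approach as the paper: a direct case-by-case verification of the two cocycle axioms, organized by the seven types of composable pairs and nine types of composable triples in $\mathcal{Y}(\gamma)$. Your observation that exactly three of the pair-types and three of the triple-types invoke the corresponding cocycle identity of $G(\mathcal{Y})$ while the rest collapse trivially (because at least one $\lambda$ is $\mathrm{id}_{G_\gamma}$ and the relevant twist is $e$) matches the paper's computation precisely, and is in fact a cleaner way to summarize what the paper writes out line by line.
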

\begin{proof}  
We first show that the data over pairs of composable morphisms align with Definition \ref{def_CoG} 3(a). Specifically, for all $(u,v)\in E^{(2)}(\mathcal{Y}(\gamma))$,
$$\text{Ad}(l_{u,v})\lambda_{uv}=\lambda_u \lambda_v.$$
The equations denoted by $(\Snowman[1.1])$ follow by Definition \ref{def_CoG} 3(a) for $G(\mathcal{Y})$.

    For all $((c_1,d_1),(c_2,d_2))\in E^{(2)}(\text{Lk}_\gamma)$,
    \begin{align*}
        \text{Ad}(l_{(c_1,d_1),(c_2,d_2)})\lambda_{(c_1, d_1d_2)}
        =\text{Ad}(g_{d_1,d_2})\psi_{d_1d_2}
        \overset{(\Snowman[1.1])}{=}\psi_{d_1}\psi_{d_2}
        =\lambda_{(c_1,d_1)} \lambda_{(c_2,d_2)}.
    \end{align*}
    
    For all $(\gamma*c,(c,d))\in (\{\gamma\}\times V(\text{Lk}_\gamma))\times E(\text{Lk}_\gamma))$,
    \begin{align*}
    \text{Ad}(l_{\gamma*c,(c,d)})\lambda_{\gamma*cd}
    =\text{Ad}(g_{c,d})\psi_{cd}\overset{(\Snowman[1.1])}{=}\psi_c \psi_d
    =\lambda_{\gamma*c} \lambda_{(c,d)}.
    \end{align*}

    For all $(b*c,(c,d))\in (V(\text{Lk}^\gamma)\times V(\text{Lk}_\gamma))\times E(\text{Lk}_\gamma)$,
    \begin{align*}
    \text{Ad}(l_{b*c,(c,d)})\lambda_{b*cd}
    =\text{Ad}(g_{c,d})\psi_{cd}
    \overset{(\Snowman[1.1])}{=}\psi_c \psi_d
    =
    \lambda_{b*c} \lambda_{(c,d)}.
    \end{align*}

    For all $((a,b),b*c)\in E(\text{Lk}^\gamma)\times(V(\text{Lk}^\gamma)\times V(\text{Lk}_\gamma))$,
    \begin{align*}
    \text{Ad}(l_{(a,b),b*c})\lambda_{ab*c}
    =\text{Ad}(e)\psi_c=\psi_c=\lambda_{(a,b)} \lambda_{b*c}.
    \end{align*}

    Now let $(b*\gamma,\gamma*c)\in(V(\text{Lk}^\gamma)\times \{\gamma\})\times ( \{\gamma\}\times V(\text{Lk}_\gamma))$.
    \begin{align*}
    \text{Ad}(l_{b*\gamma,\gamma*c})\lambda_{b*c}=\text{Ad}(e)\psi_{c}=\psi_c=\lambda_{b*\gamma} \lambda_{\gamma*c}.
    \end{align*}

    For all $((a,b),b*\gamma)\in E(\text{Lk}^\gamma)\times(V(\text{Lk}^\gamma)\times \{\gamma\})$,
    \begin{align*}
    \text{Ad}(l_{(a,b),b*\gamma})\lambda_{ab*\gamma}=\text{Ad}(e)\text{id}_{G_\gamma}=\lambda_{(a,b)} \lambda_{b*\gamma}.
    \end{align*}

For all $((a_1,b_1),(a_2,b_2))\in E^{(2)}(\text{Lk}^\gamma)$,
\begin{gather*}
    \text{Ad}(l_{((a_1,b_1),(a_2,b_2))}) \lambda_{(a_1a_2,b_2)} 
    =\text{Ad}(e)\text{id}_{G_\gamma}=
    \lambda_{(a_1,b_1)} \lambda_{(a_2,b_2)}.
\end{gather*}

Next we show that Condition \ref{def_CoG} 3(b) holds for all $ (u,v,w)\in E^{(3)}(\mathcal{Y}(\gamma))$, i.e.
$$\lambda_u(l_{v,w})l_{u,vw}=l_{u,v}l_{uv,w}.$$
The equations denoted by $(\Coffeecup)$ follow by Definition \ref{def_CoG} 3(b) for $G(\mathcal{Y})$.

For $((c_1,d_1), (c_2,d_2), (c_3,d_3))\in E^{(3)}(\text{Lk}_\gamma))$,
    \begin{align*}
        \lambda_{(c_1,d_1)}(l_{((c_2,d_2),(c_3,d_3))})l_{(c_1,d_1), (c_2,d_2d_3)}&=\psi_{d_1}(g_{d_2,d_3})g_{d_1,d_2d_3}\\&\overset{(\Coffeecup)}{=} g_{d_1,d_2}g_{d_1d_2,d_3}\\&=l_{(c_1,d_1), (c_2,d_2)}l_{(c_1,d_1d_2),(c_3,d_3)}.
    \end{align*}
    
For $(\gamma*c_1, (c_1,d_1), (c_2,d_2))\in (\{\gamma\}\times V(\text{Lk}_\gamma))\times E^{(2)}(\text{Lk}_\gamma)$ such that $c_1d_1=c_2$,
    \begin{align*}
        \lambda_{\gamma*c_1}(l_{(c_1,d_1), (c_2,d_2)})l_{\gamma*c_1, (c_1,d_1d_2)}&=\psi_{c_1}(g_{d_1,d_2})g_{c_1, d_1d_2}\\&\overset{(\Coffeecup)}{=} g_{c_1,d_1}g_{c_2,d_2}\\&=l_{\gamma*c_1, (c_1,d_1)}l_{\gamma*c_2,(c_2,d_2)}.
    \end{align*}

For $(b*c_1, (c_1,d_1), (c_2,d_2))\in (V(Lk^\gamma)\times V(\text{Lk}_\gamma))\times E^{(2)}(\text{Lk}_\gamma)$ such that $c_1d_1=c_2$,
    \begin{align*}
        \lambda_{b*c_1}(l_{(c_1,d_1), (c_2,d_2)})l_{b*c_1, (c_1,d_1d_2)}&=\psi_{c_1}(g_{d_1,d_2})g_{c_1, d_1d_2}\\&\overset{(\Coffeecup)}{=} g_{c_1,d_1}g_{c_2,d_2}\\&=l_{b*c_1, (c_1,d_1)}l_{b*c_2,(c_2,d_2)}.
    \end{align*}

For $(b*\gamma, \gamma*c, (c,d))\in (V(\text{Lk}^\gamma)\times \{\gamma\})\times (\{\gamma\}\times V(\text{Lk}_\gamma))\times E(\text{Lk}_\gamma)$,
    \begin{gather*}
        \lambda_{b*\gamma}(l_{\gamma*c,(c,d)})l_{b*\gamma,\gamma*cd}= \text{id}_{G_\gamma}(g_{c,d})e=g_{c,d}=l_{b*\gamma,\gamma*c}l_{b*c,(c,d)}.
    \end{gather*}

For $((a,b), b*c, (c,d))\in E(\text{Lk}^\gamma)\times (V(\text{Lk}^\gamma)\times V(\text{Lk}_\gamma))\times E(\text{Lk}^\gamma)$,
    \begin{gather*}
        \lambda_{(a,b)}(l_{b*c,(c,d)})l_{(a,b),b*cd}=\text{id}_{G_\gamma}(g_{c,d})e=g_{c,d}=l_{(a,b),b*c}l_{ab*c,(c,d)}.
    \end{gather*}

For $((a,b), b*\gamma, \gamma*c)\in E(\text{Lk}^\gamma)\times (V(\text{Lk}^\gamma)\times\{\gamma\})\times(\{\gamma\}\times V(\text{Lk}_\gamma))$,
    \begin{gather*}
        \lambda_{(a,b)}(l_{b*\gamma,\gamma*c})l_{(a,b),b*c}=e=l_{(a,b), b*\gamma}l_{ab*\gamma,\gamma*c}.
    \end{gather*}

For $((a_1,b_1), (a_2,b_2), b_2*\gamma)\in E^{(2)}(\text{Lk}^\gamma)\times (V(\text{Lk}^\gamma)\times\{\gamma\})$,
    \begin{gather*}
        \lambda_{(a_1,b_1)}(l_{(a_2,b_2), b_2*\gamma})l_{(a_1,b_1),a_2b_2*\gamma}=e=l_{(a_1,b_1), (a_2,b_2)}l_{(a_1,b_1b_2),b_2*\gamma}.
    \end{gather*}

For $((a_1,b_1), (a_2,b_2), b_2*c)\in E^{(2)}(\text{Lk}^\gamma)\times (V(\text{Lk}^\gamma)\times V(\text{Lk}_\gamma))$,
    \begin{gather*}
        \lambda_{(a_1,b_1)}(l_{(a_2,b_2), b_2*c})l_{(a_1,b_1),a_2b_2*c}=e=l_{(a_1,b_1), (a_2,b_2)}l_{(a_1,b_1b_2),b_2*c}.
    \end{gather*}

For $((a_1,b_1), (a_2,b_2), (a_3,b_3))\in E^{(3)}(\text{Lk}^\gamma)$,
    \begin{gather*}
        \lambda_{(a_1,b_1)}(l_{(a_2,b_2), (a_3,b_3)})l_{(a_1,b_1),(a_2a_3,b_3)}=e=l_{(a_1,b_1), (a_2,b_2)}l_{(a_1,b_1b_2),(a_3,b_3)}.
    \end{gather*}
\end{proof}

\subsection{The local development revisited}
In this section we prove that a local complex of groups reflects the local data of a complex of groups. This is the content of Theorem \ref{thm_localCoG}, which we recall below:
\thmlocalCoG*

\begin{proposition}
\label{prop_pi1localCoG}
The local homomorphism
$(\iota_T)_\gamma:L_\gamma\longrightarrow \pi_1(L(\mathcal{Y}(\gamma)), T)$ is an isomorphism.
\end{proposition}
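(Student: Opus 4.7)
The plan is to exhibit an explicit two-sided inverse $\phi_*:\pi_1(L(\mathcal{Y}(\gamma)),T)\to G_\gamma$ of $(\iota_T)_\gamma$, taking advantage of the rigid structure of $L(\mathcal{Y}(\gamma))$: every local group over $\gamma$ or over $V(\text{Lk}^\gamma)$ is literally $G_\gamma$, most $\lambda_a$ are identity maps or $\psi_c$'s, and most twisting elements $l_{a,b}$ are trivial. First I would fix the convenient spanning tree
\[
T \;=\; \{\,|\gamma*c| : c\in V(\text{Lk}_\gamma)\,\}\;\cup\;\{\,|b*\gamma| : b\in V(\text{Lk}^\gamma)\,\}
\]
of $|\mathcal{Y}(\gamma)|^{(1)}$, namely the star centered at $\gamma$. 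Since each vertex of $\mathcal{Y}(\gamma)$ other than $\gamma$ is joined to $\gamma$ by exactly one such $1$-cell, this is indeed a spanning tree.

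Next I would define a morphism $\phi: L(\mathcal{Y}(\gamma)) \to G_\gamma$ in the sense of Definition \ref{def_morphismtoG} by setting
\[
\phi_c=\psi_c,\quad \phi_\gamma=\text{id}_{G_\gamma},\quad \phi_b=\text{id}_{G_\gamma},\quad \phi((c,d))=g_{c,d},
\]
and $\phi(a)=e$ for every other edge $a\in E(\mathcal{Y}(\gamma))$. A case analysis across the five edge types and seven composable-pair types needs to verify both conditions of Definition \ref{def_morphismtoG}. The only non-tautological cases are condition (1) on edges $(c,d)$, which is precisely the axiom $\text{Ad}(g_{c,d})\psi_{cd}=\psi_c\psi_d$ of Definition \ref{def_CoG}(3a) for $G(\mathcal{Y})$, and condition (2) on pairs in $E^{(2)}(\text{Lk}_\gamma)$, which is precisely the cocycle
\[
\psi_{c_1}(g_{d_1,d_2})\,g_{c_1,d_1d_2}=g_{c_1,d_1}\,g_{c_1d_1,d_2}
\]
of Definition \ref{def_CoG}(3b) for $G(\mathcal{Y})$; every remaining case collapses to $e=e$. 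Since $\phi$ sends every edge of $T$ to $e$, Proposition \ref{prop_pi1inducedtoG} yields the desired homomorphism $\phi_*$, and by construction $\phi_*\circ(\iota_T)_\gamma=\phi_\gamma=\text{id}_{G_\gamma}$. This gives injectivity of $(\iota_T)_\gamma$ and surjectivity of $\phi_*$.

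For the reverse direction, I would unwind Definition \ref{def_pi1_presentation} and show every generator of $\pi_1(L(\mathcal{Y}(\gamma)),T)$ lies in $(\iota_T)_\gamma(G_\gamma)$. The tree relations kill $(\gamma*c)^\pm$ and $(b*\gamma)^\pm$. The relation $\lambda_a(g)=a^+ga^-$ applied to $a=\gamma*c$ identifies each $g\in G_{i(c)}$ with $\psi_c(g)\in G_\gamma$, and applied to $a=b*\gamma$ collapses the local $G_\gamma$ over each $b$ onto $G_\gamma$ at $\gamma$. Finally, the composition relation $a^+b^+=l_{a,b}(ab)^+$ applied respectively to $(\gamma*c,(c,d))$, $(b*\gamma,\gamma*c)$, and $((a,b),b*\gamma)$ forces
\[
(c,d)^+=g_{c,d}\in G_\gamma,\quad (b*c)^+=e,\quad (a,b)^+=e
\]
in $\pi_1$. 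The main obstacle is the bookkeeping needed to verify the morphism axioms for $\phi$ and to run through the edge generators; however, thanks to the triviality of most of the data of $L(\mathcal{Y}(\gamma))$, every calculation either reduces to $e=e$ or to the complex-of-groups axioms for $G(\mathcal{Y})$, so the argument amounts to a systematic dictionary lookup rather than any genuinely new computation.
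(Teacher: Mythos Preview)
Your proposal is correct and follows essentially the same route as the paper: the same spanning tree is chosen, your morphism $\phi$ coincides with the paper's retraction $\Theta$ once one unpacks the definitions (since $l_{\gamma*c,(c,d)}=g_{c,d}$ while $l_{b*\gamma,\gamma*c}=l_{(a,b),b*\gamma}=e$), and the surjectivity argument via the presentation relations is the same computation. Your observation that only the $(c,d)$ case of condition~(1) and the $E^{(2)}(\text{Lk}_\gamma)$ case of condition~(2) are non-tautological is a clean shortcut over the paper's exhaustive case-check.
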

\begin{proof}
Let $T$ be the maximal tree in $|\mathcal{Y}(\gamma)|^{(1)}$ containing all morphisms of the form $$b*\gamma\in V(\text{Lk}^\gamma)\times\{\gamma\}\text{ and }\gamma*c\in\{\gamma\}\times V(\text{Lk}_\gamma).$$ Observe that $T=(V(\text{Lk}^\gamma)\times\{\gamma\})\cup (\{\gamma\}\times V(\text{Lk}_\gamma))$. 
By the presentation of the fundamental group in Definition \ref{def_pi1_presentation}, there exists a surjection from the free product of its generators to the group itself
$$\Pi: *_{\mu\in V(\mathcal{Y}(\gamma))} L_\mu * E^{\pm}(\mathcal{Y}(\gamma))\longrightarrow \pi_1(L(\mathcal{Y}(\gamma)), T).$$
We first show surjectivity of $(\iota_T)_\gamma$ by showing that $\Pi$ factors through $(\iota_T)_\gamma:L_\gamma\to\pi_1(L(\mathcal{Y}(\gamma)), T).$

Recall that $\pi_1(L(\mathcal{Y}(\gamma)), T)$ is generated by
$$\left\{\bigsqcup_{\mu\in V(\mathcal{Y}(\gamma))} L_\mu\bigsqcup E^{\pm}(\mathcal{Y}(\gamma))\right\}$$
over the set of relations
\begin{center} 
$\left\{\begin{aligned}
    &(u^+)^{-1}=u^- &&u\in E(\mathcal{Y}(\gamma))\\
    &(\gamma*c)^\pm, (b*\gamma)^\pm=e &&\gamma*c\in\{\gamma\}\times V(\text{Lk}_\gamma), b*\lambda\in V(\text{Lk}^\gamma)\times\{\gamma\}\\
    &\lambda_{\gamma*c}(g)=(\gamma*c)^+g(\lambda*c)^- &&c\in V(\text{Lk}_\gamma)\\
    &u^+v^+= l_{u,v}(uv)^+ &&(u,v)\in E^{(2)}(\mathcal{Y}(\gamma))
\end{aligned}
\right\}.$
\end{center}
%

Since $(\gamma*c)^\pm\mapsto e$ for all $\gamma*c\in\{\gamma\}\times V(\text{Lk}_\gamma)$,
\begin{gather}\label{eq_iotaTgamma1}
    L_c=\text{Ad}((\gamma*c)^+)(L_c)=\lambda_{\gamma*c}(L_c)\subset L_\gamma.
\end{gather}
In addition, by definition of $\lambda_{b*\gamma}$ for all $b*\gamma\in V(\text{Lk}^\gamma)\times \{\gamma\}$, 
\begin{gather}\label{eq_iotaTgamma2}
    L_\gamma= \text{Id}_{L_\gamma}(L_\gamma)=\lambda_{b*\gamma} \subset L_b.
\end{gather}
Hence, all local groups are identified with subgroups of $L_\gamma$.

It remains to show injectivity of $E^\pm(\mathcal{Y}(\gamma))\to L_\gamma$ under this projection. By the second relation, $(\lambda*c)^\pm, (b*\lambda)^\pm\mapsto e$. The remaining generators come from morphisms of the form $(c,d)\in E(\text{Lk}_\gamma), b*c\in V(\text{Lk}^\gamma)\times V(\text{Lk}_\gamma)$ and $(a,b)\in E(\text{Lk}^\gamma)$. Applying the last relation to the pairs of morphisms $(\gamma*c, (c,d))\in (\{\gamma\}\times V(\text{Lk}_\gamma))\times E(\text{Lk}_\gamma), (b*\gamma, \gamma*c)\in (V(\text{Lk}^\gamma)\times \{\gamma\})\times ( \{\gamma\}\times V(\text{Lk}_\gamma))$ and $((a,b),b*\gamma)\in E(\text{Lk}^\gamma)\times (V(\text{Lk}^\gamma)\times \{\gamma\})$, we see that
\begin{gather}
    (c,d)^+=(\gamma*c)^+(c,d)^+=l_{\gamma*c, (c,d)}(\gamma*cd)^+=l_{\gamma*c, (c,d)}\in L_\gamma, \label{eq_iotaTgamma3}
\\(b*c)^+=(b*\gamma)^+(\gamma*c)^+(l_{b*\gamma, \gamma*c})^{-1}=(l_{b*\gamma, \gamma*c})^{-1}\in L_b\subset L_\gamma,\label{eq_iotaTgamma4} \text{ and}
   \\
    (a,b)^+=(a,b)^+(b*\gamma)^+=l_{(a,b), b*\gamma}(ab*\gamma)^+
    =l_{(a,b), b*\gamma}\in L_{ab}\subset L_\gamma.\label{eq_iotaTgamma5}
\end{gather}

Consequently, $\Pi$ factors through
$(\iota_T)_\gamma$, making it a surjection.

Next, we show that $(\iota_T)_\gamma$ is a monomorphism. We do so by defining a morphism
$\Theta:L(\mathcal{Y}(\gamma))\to L_\gamma$ whose induced homorphism on fundamental groups is ${(\iota_T)_\gamma}^{-1}$, i.e., it factors through the identity map on $L_\gamma$.
We define $\Theta$ as follows:
\begin{align*}
    \Theta_-&=\begin{cases}
            \lambda_{\gamma*c}:L_c\to L_\gamma &c\in V(\text{Lk}_\gamma)\\
            \text{Id}_{L_\gamma}:L_\gamma\to L_\gamma &\gamma\in \{\gamma\}\\
            \text{Id}_{L_b}:L_b\to L_b &b\in V(\text{Lk}^\gamma)\\
        \end{cases}
\\
\Theta(-)&=\begin{cases}
            l_{\gamma*c, (c,d)} &(c,d)\in E(\text{Lk}_\gamma)\\
            e &\gamma*c\in \{\gamma\}\times V(\text{Lk}_\gamma)\\
            {l_{b*\gamma, \gamma*c}}^{-1} &b*c\in V(\text{Lk}^\gamma)\times V(\text{Lk}_\gamma)\\
            e &b*\gamma\in V(\text{Lk}^\gamma)\times\{\gamma\}\\
            l_{(a,b), b*\gamma} &(a,b)\in E(\text{Lk}^\gamma)
        \end{cases}
\end{align*}
\begin{claim}
    The morphism $\Theta:L(\mathcal{Y}(\gamma))\to L_\gamma$ is a morphism of complexes of groups. 
\end{claim}
\begin{proof}[Proof of claim.]
    We first show that for all $u\in E(\mathcal{Y}(\gamma))$, $\text{Ad}(\Theta(u))\Theta_{i(u)}=\Theta_{t(u)}\lambda_{u}$.
    
    Equations annotated with $(\Candle)$ follow from Definition \ref{def_CoG} 3(a) for $L(\mathcal{Y}(\gamma))$. 
    \begin{align*}
    \text{Ad}(\Theta(c,d))\Theta_{i((c,d))}
    &=\text{Ad}(l_{\gamma*c, (c,d)})\lambda_{\gamma*cd}\\
    &\overset{(\Candle)}{=}\lambda_{\gamma*c}\lambda_{(c,d)}\\
    &=\Theta_{t((c,d))}\lambda_{(c,d)}.
    \end{align*}
    \begin{align*}
    \text{Ad}(\Theta(\gamma*c))\Theta_{i(\gamma*c)}
    &=\text{Ad}(e)\lambda_{\gamma*c}\\
    &=\text{id}_{L_\gamma}\lambda_{\gamma*c}\\
    &=\Theta_{t(\gamma*c)}\lambda_{\gamma*c}.
    \end{align*}
    \begin{align*}
    \text{Ad}(\Theta(b*c))\Theta_{i(b*c)}
    &=\text{Ad}({l_{b*\gamma, \gamma*c}}^{-1})\lambda_{\gamma*c}\\
    &=\text{Ad}({l_{b*\gamma, \gamma*c}}^{-1})\text{id}_{L_\gamma}\lambda_{\gamma*c}\\
    &=\text{Ad}({l_{b*\gamma, \gamma*c}}^{-1})\lambda_{b*\gamma}\lambda_{\gamma*c}\\
    &\overset{(\Candle)}{=}\text{id}_{L_b}\lambda_{b*c}\\
    &=\Theta_{t(b*c)}\lambda_{b*c}.
    \end{align*}
    \begin{align*}
    \text{Ad}(\Theta(b*\gamma))\Theta_{i(b*\gamma)}
    &=\text{Ad}(e)\text{id}_{L_\gamma}\\
    &=\text{id}_{L_b}\text{id}_{L_\gamma}\\
    &=\Theta_{t(b*\gamma)}\lambda_{b*\gamma}.
    \end{align*}
    \begin{align*}
    \text{Ad}(\Theta((a,b)))\Theta_{i(a,b)}
    &=\text{Ad}(l_{(a,b), b*\gamma})\text{id}_{L_b}\\
    &=\text{id}_{L_{ab}}\text{id}_{L_b}\\
    &=\Theta_{t((a,b))}\lambda_{(a,b)}.
    \end{align*}

    Next we show that for all $(u,v)\in E^{(2)}(\mathcal{Y}(\gamma))$, $\Theta_{t(u)}(l_{u,v})\Theta(uv)=\Theta(u)\Theta(v)$.
    Equations annotated with $(\Strichmaxerl)$ follow from Definition \ref{def_CoG} 3(b) for $L(\mathcal{Y}(\gamma))$. 

    \begin{align*}
    \Theta_{t((c_1,d_1))}(l_{(c_1,d_1),(c_2,d_2)})\Theta((c_1,d_1d_2))
    &=\lambda_{\gamma*c_1}(l_{((c_1,d_1),(c_2,d_2))})l_{\gamma*c_1, (c_1,d_1d_2)}\\
    &\overset{(\Strichmaxerl)}{=}l_{\gamma*c_1, (c_1,d_1)}l_{\gamma*c_2, (c_2,d_2)}\\
    &=\Theta((c_1,d_1))\Theta((c_2,d_2)).
    \end{align*}
    \begin{align*}
    \Theta_{t(\gamma*c)}(l_{\gamma*c, (c,d)})\Theta(\gamma*cd)
    &=\text{id}_{L_\gamma}(l_{\gamma*c, (c,d)})e\\
    &=el_{\gamma*c, (c,d)}\\
    &=\Theta(\gamma*c)\Theta((c,d)).
    \end{align*}
    \begin{align*}
    \Theta_{t(b*c)}(l_{b*c,(c,d)})\Theta(b*cd)
    &=\text{id}_{L_b}(l_{b*c, (c,d)}){l_{b*\gamma, \gamma*cd}}^{-1}\\
    &=\lambda_{b*\gamma}({l_{b*\gamma, \gamma*c}}^{-1})l_{\gamma*c, (c,d)}\\
    &\overset{(\Strichmaxerl)}{=}{l_{b*\gamma, \gamma*c}}^{-1}l_{\gamma*c, (c,d)}\\
    &=\Theta(b*c)\Theta((c,d)).
    \end{align*}
    \begin{align*}
    \Theta_{t((a,b))}(l_{(a,b),b*c})\Theta(ab*c)
    &=\text{id}_{L_{ab}}(l_{(a,b),b*c)}{l_{ab*\gamma, \gamma*c}}^{-1}\\
    &=e\\
    &=l_{(a,b),b*c}{l_{b*\gamma,\gamma*c}}^{-1}\\
    &=\Theta((a,b))\Theta(b*c).
    \end{align*}
    \begin{align*}
    \Theta_{t(b*\gamma)}(l_{b*\gamma, \gamma*c})\Theta(b*c)
    &=\text{id}_{L_b}(l_{b*\gamma, \gamma*c}){l_{b*\gamma, \gamma*c}}^{-1}\\
    &=e\\
    &=\Theta(b*\gamma)\Theta(\gamma*c).
    \end{align*}
    \begin{align*}
    \Theta_{t((a,b))}(l_{(a,b),b*\gamma})\Theta(ab*\gamma)
    &=\text{id}_{L_{ab}}(l_{(a,b),b*\gamma}){l_{b*\gamma, \gamma*c}}^{-1}\\
    &=l_{(a,b),b*\gamma}e\\
    &=\Theta((a,b))\Theta(b*\gamma).
    \end{align*}
    \begin{align*}
    \Theta_{t((a_1,b_1))}(l_{(a_1,b_1),(a_2,b_2)})\Theta(a_1a_2,b_2)
    &=\text{id}_{L_{a_1b_1}}(l_{(a_1,b_1),b_1*\gamma})l_{(a_1a_2,b_2),b_2*\gamma}\\
    &=l_{(a_1,b_1),b_1*\gamma}l_{(a_2,b_2),b_2*\gamma}\\
    &=\Theta((a_1,b_1))\Theta((a_2,b_2)).
    \end{align*}

We have thus proven the claim that $\Theta$ is a morphism from a complex of groups to a group.
\end{proof}
By Proposition \ref{prop_pi1inducedtoG}, $\Theta_*: \pi_1(L(\mathcal{Y}(\gamma)),T)\to L_\gamma$ sends the generators $g\in L_\mu$ to $\Theta_\mu(g)$ and $u^+$ to $\Theta(u)$. Also
recall Equations (\ref{eq_iotaTgamma1}) to (\ref{eq_iotaTgamma5}), which describe the homomorphism $(\iota_T)_\gamma$ by relating subgroups and twisting elements in $L_\gamma$ to the generators of $\pi_1(L(\mathcal{Y}(\gamma)),T)$. From this data, we see that $\Theta_*={(\iota_T)_\gamma}^{-1}$. In particular, the following diagram commutes:
\begin{center}
    \begin{tikzcd}[row sep=huge,column sep=huge]
    L_\gamma \arrow[r, two heads, "(\iota_T)_\gamma"] \arrow[dr, "\text{id}"]
    & \pi_1(L(\mathcal{Y}(\gamma)), T) \arrow[d, "\Theta_*"] \\
     & L_\gamma
  \end{tikzcd}
\end{center}
  If there exists a nontrivial element $g\in \text{ker}((\iota_T)_\gamma)$, then $g\in \text{ker}(\Theta_*\circ (\iota_T)_\gamma)=\text{ker}(\text{id})$.
\end{proof}

\begin{proposition}\label{prop_localdevelopability}
    A local complex of groups is always developable.
\end{proposition}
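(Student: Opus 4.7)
The plan is to invoke the developability criterion (Theorem \ref{thm_developability_criterion}), which reduces the problem to exhibiting a group $G$ and a morphism $L(\mathcal{Y}(\gamma))\to G$ that is injective on every local group. The key observation is that such a morphism has already been constructed, and essentially all of the verification work has already been done in the proof of Proposition \ref{prop_pi1localCoG}.

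Specifically, I would take $G = L_\gamma = G_\gamma$ and reuse the morphism $\Theta:L(\mathcal{Y}(\gamma))\to L_\gamma$ defined in the proof of Proposition \ref{prop_pi1localCoG}. The claim that $\Theta$ is a morphism of a complex of groups to a group (in the sense of Definition \ref{def_morphismtoG}) is precisely the content of the internal claim established there, so this datum is available for free.

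It then remains to check that $\Theta$ is injective on each local group, which is immediate by inspection. For $c\in V(\text{Lk}_\gamma)$, the local homomorphism is $\Theta_c = \lambda_{\gamma * c} = \psi_c : G_{i(c)} \to G_\gamma$, which is injective because every structure homomorphism of a complex of groups is injective by Definition \ref{def_CoG}(2). For the base object, $\Theta_\gamma = \mathrm{id}_{L_\gamma}$ is trivially injective. For $b\in V(\text{Lk}^\gamma)$, we have $L_b = G_\gamma$ by construction and $\Theta_b = \mathrm{id}_{L_b}$, again trivially injective. Applying Theorem \ref{thm_developability_criterion} then yields developability of $L(\mathcal{Y}(\gamma))$.

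Since the compatibility conditions for $\Theta$ were the only substantive calculations, and those have already been discharged in Proposition \ref{prop_pi1localCoG}, there is no genuine obstacle here: the proof of Proposition \ref{prop_localdevelopability} should amount to a single short paragraph that cites the existing morphism $\Theta$ and invokes the developability criterion.
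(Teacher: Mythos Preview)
Your proposal is correct and follows essentially the same approach as the paper: both reuse the morphism $\Theta:L(\mathcal{Y}(\gamma))\to L_\gamma$ from the proof of Proposition~\ref{prop_pi1localCoG}, observe that each local homomorphism $\Theta_c=\lambda_{\gamma*c}$, $\Theta_\gamma=\mathrm{id}$, $\Theta_b=\mathrm{id}$ is injective, and invoke the developability criterion (Theorem~\ref{thm_developability_criterion}). The paper's version phrases the injectivity check via the compatibility condition of Definition~\ref{def_morphismtoG}, but this is a cosmetic difference.
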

\begin{proof}
    Recall by Definition \ref{def_morphismtoG} that 
    \begin{align*}
        \Theta_c&=\Theta_\gamma \lambda_{\gamma*c} &c\in V(\text{Lk}_\gamma)\\
        \Theta_b&=\Theta_\gamma{\lambda_{b*\gamma}}^{-1}=\Theta_\gamma &b\in V(\text{Lk}^\gamma)
    \end{align*}
    
    By our construction of $\Theta$, $\Theta_\gamma$ and $\lambda_{\gamma*c}$ are monomorphisms. Since all local homomorphisms of $\Theta$ are injective, by the developability criterion in Theorem \ref{thm_developability_criterion}, the local complex of groups $L(\mathcal{Y}(\gamma))$ is developable.
\end{proof}

\begin{proposition}\label{prop_development_localCoG}
The \textit{local development of $\gamma\in V(\mathcal{Y})$} is isomorphic to the development of $\mathcal{Y}(\gamma)$ with respect to the natural morphism to its fundamental group, i.e.,
\begin{equation*}
D(\mathcal{Y}(\gamma), \iota_T)\cong {\mathcal{Y}(\tilde{\gamma})}.
\end{equation*}
\end{proposition}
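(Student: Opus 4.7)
The strategy is to invoke the uniqueness statement of Theorem \ref{thm_D(Y,iota_T)}. Since $L(\mathcal{Y}(\gamma))$ is developable by Proposition \ref{prop_localdevelopability}, $D(\mathcal{Y}(\gamma),\iota_T)$ is characterized up to $\pi_1(L(\mathcal{Y}(\gamma)),T)$-equivariant isomorphism (projecting to the identity on $\mathcal{Y}(\gamma)$) as a simply connected scwol carrying a $\pi_1$-action whose associated complex of groups and canonical morphism recover $L(\mathcal{Y}(\gamma))$ and $\iota_T$. The plan is to exhibit $\mathcal{Y}(\tilde\gamma)$ as such a scwol, using the identification $G_\gamma\cong\pi_1(L(\mathcal{Y}(\gamma)),T)$ from Proposition \ref{prop_pi1localCoG} to translate between $G_\gamma$-actions and $\pi_1$-actions. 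Simple connectedness is immediate: $|\mathcal{Y}(\tilde\gamma)|=\text{St}(\tilde\gamma)$ is a closed star around $\gamma$, a topological join of $\text{Lk}^\gamma$, $\{\gamma\}$, and $\text{Lk}_{\tilde{\gamma}}$, hence contractible.

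Next, I endow $\mathcal{Y}(\tilde\gamma)$ with a $G_\gamma$-action in which $g\in G_\gamma$ sends $(hG_{i(c)},c)\mapsto(ghG_{i(c)},c)$ on $V(\text{Lk}_{\tilde{\gamma}})$ and fixes $\gamma$ and each $b\in V(\text{Lk}^\gamma)$, with analogous coset permutations on edges. Using the source, target, and composition formulas in Definition \ref{def_localdevelopment}, a direct check shows this is a well-defined scwol action without inversions, and the forgetful map $\mathcal{Y}(\tilde\gamma)\to\mathcal{Y}(\gamma)$ stripping coset labels realizes $\mathcal{Y}(\gamma)$ as the quotient.

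The main work is to verify that the complex of groups and morphism associated to this action coincide with $L(\mathcal{Y}(\gamma))$ and $\iota_T$. Taking the canonical lifts $c\mapsto(G_{i(c)},c)$, $\gamma\mapsto\gamma$, $b\mapsto b$, the stabilizers compute to $\psi_c(G_{i(c)})$, $G_\gamma$, and $G_\gamma$, matching the local groups $L_c$, $L_\gamma$, $L_b$ of Definition \ref{def_localCoG}. Running case by case through the five families of edges in $E(\mathcal{Y}(\gamma))$ and using the target formulas in Definition \ref{def_localdevelopment} (for instance, $t(gG_{i(d)},c,d)=(gg_{c,d}^{-1}G_{i(c)},c)$ forces the twisting element $g_{c,d}$ for the composition $(\gamma*c)\circ(c,d)$), one reads off exactly the injective homomorphisms $\lambda_a$ and twisting elements $l_{a,b}$ of Definition \ref{def_localCoG}; together with Equations (\ref{eq_iotaTgamma1})--(\ref{eq_iotaTgamma5}) from the proof of Proposition \ref{prop_pi1localCoG}, the morphism associated to the action is exactly $\iota_T$ under the identification $G_\gamma\cong\pi_1(L(\mathcal{Y}(\gamma)),T)$. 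Applying Theorem \ref{thm_D(Y,iota_T)} then yields a $\pi_1$-equivariant isomorphism $\mathcal{Y}(\tilde\gamma)\to D(\mathcal{Y}(\gamma),\iota_T)$ projecting to the identity on $\mathcal{Y}(\gamma)$. The main obstacle is the bookkeeping in this edge-by-edge verification of twisting elements; each case is mechanical, but there are several families of edges to track.
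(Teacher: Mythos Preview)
Your approach is correct but takes a genuinely different route from the paper. The paper argues by direct computation: it unpacks the explicit construction of $D(\mathcal{Y}(\gamma),\iota_T)$ from \cite[Theorem III.$\mathcal{C}$.2.13]{bh}, writes out its vertex set, edge set, source/target maps, and compositions, and matches each piece literally against Definition~\ref{def_localdevelopment}. You instead invoke the uniqueness half of Theorem~\ref{thm_D(Y,iota_T)}, exhibiting $\mathcal{Y}(\tilde\gamma)$ as a simply connected scwol with the correct $G_\gamma$-action. Your route is more conceptual and explains \emph{why} the two constructions agree; the paper's route is more mechanical but avoids having to verify the axioms of a scwol action and the associated-complex-of-groups machinery. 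In practice the edge-by-edge bookkeeping is of comparable length either way.

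One point to tighten: the stabilizer of your chosen lift $(G_{i(c)},c)$ is $\psi_c(G_{i(c)})\subset G_\gamma$, not $G_{i(c)}$ itself, so the complex of groups associated to your action is only \emph{isomorphic} to $L(\mathcal{Y}(\gamma))$ (via the relabeling $\psi_c$ on each $L_c$), not literally equal. Theorem~\ref{thm_D(Y,iota_T)} as stated asks for equality, so you should either make this coboundary-type isomorphism explicit and note that it induces an isomorphism of developments, or observe (as the paper does just before Definition~\ref{def_upperlinkdev} and again in its own proof) that the convention $gG_{i(c)}:=g\psi_c(G_{i(c)})$ is already in force, which absorbs the discrepancy. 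This is a cosmetic fix, not a gap in the argument.
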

\begin{proof}
    We first show that \begin{center}
        $V(D(\mathcal{Y}(\gamma),\iota_T))=V(\mathcal{Y}(\tilde{\gamma}))$ and $E(D(\mathcal{Y}(\gamma),\iota_T))=E(\mathcal{Y}(\tilde{\gamma})).$
    \end{center}
    In both of these cases, the first equality follows from the explicit construction of the development in \cite[Theorem III.$\mathcal{C}$.2.13]{bh}. 
    Since $(\iota_T)_\mu$ is injective for all $\mu\in V(\mathcal{Y}(\gamma))$, we express the coset
    \begin{gather*}
        h(\iota_T)_\mu(L_\mu)
    \in \rightQ{L_\gamma}{(\iota_T)_\mu(L_\mu)}\text{ as } hL_\mu\in \rightQ{L_\gamma}{L_\mu}
    \end{gather*}
    and 
    \begin{gather*}g\psi_c(G_{i(c)})\in\rightQ{G_\gamma}{\psi_c(G_{i(c)})}\text{ as }
        gG_{i(c)}\in \rightQ{G_\gamma}{G_{i(c)}}.
    \end{gather*}
    Observe
    \begin{align*}
    &V(D(\mathcal{Y}(\gamma),\iota_T))
    \\=&\{(hL_\mu,\mu)|\mu\in V(\mathcal{Y}(\gamma)),hL_\mu\in \rightQ{L_\gamma}{L_\mu}\}
    \\=&\{(hL_c,c)| c\in V(\text{Lk}_\gamma), hL_c\in \rightQ{L_\gamma}{L_c}\}
    \\&\sqcup
    \{(hL_\gamma, \gamma)| hL_\gamma\in \rightQ{L_\gamma}{L_\gamma}\}
    \\&\sqcup 
    \{(hL_b,b)| b\in V(\text{Lk}^\gamma), hL_b\in \rightQ{L_\gamma}{L_b}\}
    \\=&\{(gG_{i(c)},c)| t(c)=\gamma, gG_{i(c)}\in \rightQ{G_\gamma}{G_{i(c)}}\}
    \\&\sqcup
    \{\gamma\}
    \\&\sqcup 
    \{b\in E(\mathcal{Y})| i(b)=\gamma\}
    \\=&V(\text{Lk}_{\tilde{\gamma}})\sqcup \{\gamma\}\sqcup V(\text{Lk}^\gamma)
    \\=& V(\mathcal{Y}(\tilde{\gamma})),
    \end{align*}
    where $V(\text{Lk}_{\tilde{\gamma}})$ and $V(\text{Lk}^\gamma)$ are described in Definition \ref{def_Y(gamma)} and Definition \ref{def_localdevelopment} respectively. Also,\begin{align*}
&E(D(\mathcal{Y}(\gamma),\iota_T))\\=&\{(hL_{i(u)}, u)| u\in E(\mathcal{Y}(\gamma)), h L_{i(u)}\in \rightQ{L_\gamma}{L_{i(u)}}\}\\
        =&\{(hL_{cd},c,d)|(c,d)\in E(\text{Lk}_\gamma),hL_{cd})\in \rightQ{L_\gamma}{L_{cd}}\}\\
        &\sqcup \{(hL_c, \gamma*c)| \gamma*c\in \{\gamma\}\times V(\text{Lk}_\gamma)\}\\
        &\sqcup \{(hL_c, b*c)|b*c\in V(\text{Lk}^\gamma)\times V(\text{Lk}_\gamma), hL_c\in \rightQ{L_\gamma}{L_c}\}\\
        &\sqcup \{(hL_b, b*\gamma)| b*\gamma\in V(\text{Lk}^\gamma)\times \{\gamma\}, hL_\gamma\in \rightQ{L_\gamma}{L_\gamma}\}\\
        &\sqcup \{(hL_{ab},a,b)|(a,b)\in E(\text{Lk}^\gamma),hL_{ab}\in \rightQ{L_\gamma}{L_\gamma}\}\\
        =&
        \{(gG_{i(d)},c,d)|(c,d)\in E^{(2)}(\mathcal{Y}), t(c)=\gamma,gG_{i(d)}\in \rightQ{G_\gamma}{G_{i(d)}}\}\\
        &\sqcup \{\gamma*(gG_{i(c)}, c)| c\in V(\text{Lk}_\gamma),
        gG_{i(c)}\in \rightQ{G_\gamma}{G_{i(c)}}
        \}\\
        &\sqcup \{b*(gG_{i(c)}, c)|b\in  V(\text{Lk}^\gamma), (gG_{i(c)}, c)\in V(\text{Lk}_{\tilde{\gamma}}), gG_{i(c)}\in \rightQ{G_\gamma}{G_{i(c)}}\}\\
        &\sqcup \{b*\gamma\in E(\mathcal{Y})\times\{\gamma\}| i(b)=\gamma\}\\
        &\sqcup \{(a,b)\in E^{(2)}(\mathcal{Y})| i(b)=\gamma\}  
        \\=&E(\text{Lk}_{\tilde{\gamma}}) 
        \sqcup (\{\gamma\}\times V(\text{Lk}_{\tilde{\gamma}}))
        \sqcup (V(\text{Lk}^\gamma)\times V(\text{Lk}_{\tilde{\gamma}}))
        \sqcup (V(\text{Lk}^{\gamma})\times \{\gamma\})
        \sqcup E(\text{Lk}^{\gamma}) 
        \\=&E(\mathcal{Y}(\tilde{\gamma})).
    \end{align*}
    Note that the second last equation follows from the description of each type of morphism of $\mathcal{Y}(\tilde{\gamma})$ in Definition \ref{def_localdevelopment}.

    Each type of morphism in $D(\mathcal{Y}(\gamma),\iota_T)$ has the following source and target maps as described in the proof of \cite[Theorem 2.13]{bh}\footnote{We also use that $\iota_T(u)=u^+=l_{-,-}$, where $l_{-,-}$ is the twisting element in $L_\gamma$ in Equations (\ref{eq_iotaTgamma3}) - (\ref{eq_iotaTgamma5}).}:
    \begin{align*}
    i:E(D(\mathcal{Y}(\gamma),\iota_T))&\longrightarrow V(D(\mathcal{Y}(\gamma),\iota_T))\\
    (hL_{cd},c,d)&\longmapsto (hL_{cd},cd) \\
    (hL_c,\gamma*c)&\longmapsto (hL_c,c) \\
    (hL_c,b*c)&\longmapsto (hL_c,c) \\
    (hL_\gamma,b*\gamma)&\longmapsto (hL_\gamma,\gamma) \\
    (hL_b,a,b)&\longmapsto (hL_b,b)
\\
    t:E(D(\mathcal{Y}(\gamma),\iota_T))&\longrightarrow V(D(\mathcal{Y}(\gamma),\iota_T))\\
    (hL_{cd},c,d)&\longmapsto (h{l_{\gamma*c,(c,d)}}^{-1}L_{c},c)
    \\
    (hL_c,\gamma*c)&\longmapsto (hL_\gamma,\gamma)
    \\
    (hL_c,b*c)&\longmapsto 
    (hL_b,b)
    \\
    (hL_\gamma,b*\gamma)&\longmapsto (hL_b,b)
    \\
        (hL_b,a,b)&\longmapsto (h{l_{(a,b), b*\gamma}}^{-1}L_{ab},ab)
    \end{align*}
Comparing this with $i,t:E(\mathcal{Y}(\tilde{\gamma}))\to V(\mathcal{Y}(\tilde{\gamma}))$ in Definition \ref{def_localdevelopment}, we see that source and target maps of both scwols are equivalent. Composition follows by the equivalence of morphisms and the equivalence of their source and target maps.

Hence, we conclude that $D(\mathcal{Y}(\gamma), \iota_T)$ is isomorphic to $\mathcal{Y}(\tilde{\gamma})$.
\end{proof}

\begin{proof}[Proof of Theorem \ref{thm_localCoG}]
    The proof follows immediately from Proposition \ref{prop_pi1localCoG}, Proposition \ref{prop_localdevelopability} and Proposition \ref{prop_development_localCoG}.
\end{proof}

\subsection{A functorial version of the developability criterion}

\begin{construction}\label{construction_morphismfunctorial}
    There exists a morphism $\Sigma:L(\mathcal{Y}(\gamma))\to G(\mathcal{Y})$ of complexes of groups over the morphism $h:\mathcal{Y}(\gamma)\to\mathcal{Y}$\footnote{This morphism is defined in Proposition \ref{prop_morphism_Y(gamma)toY}.}   of scwols, defined by
\begin{align*}
    \Sigma_-=
    \begin{cases}
        \text{id}_{L_c} &c\in V(\text{Lk}_\gamma)\\
        \text{id}_{L_\gamma} &\gamma\in\{\gamma\}\\
       \psi_b &b\in V(\text{Lk}^\gamma)
\end{cases}
&&
\Sigma(-)=
    \begin{cases}
     e &(c,d)\in E(\text{Lk}_\gamma)\\
     e &\gamma*c\in \{\gamma\}\times V(\text{Lk}_\gamma)\\
     g_{b,c} &b*c\in V(\text{Lk}^\gamma)\times V(\text{Lk}_\gamma)\\
     e &b*\gamma\in V(\text{Lk}^\gamma)\times \{\gamma\}\\
     {g_{a,b}}^{-1} &(a,b)\in E(\text{Lk}^\gamma)
\end{cases}
\end{align*}
\end{construction}

\begin{proof}
We first show that the data over each morphism is compatible with $\Sigma$ in the sense of Definition \ref{morphism_CoG} (2.1):
    $$\text{Ad}(\Sigma(u))\psi_{h(u)}\Sigma_{i(u)}=\Sigma_{t(u)}\lambda_u,\forall u\in E(\mathcal{Y}(\gamma)).$$
    To prove this condition, we make reference to the data of the morphism $\Sigma$ above, the morphism $h:\mathcal{Y}(\gamma)\to\mathcal{Y}$ in Proposition \ref{prop_morphism_Y(gamma)toY}, the source and target morphisms $i,t:E(\mathcal{Y}(\gamma))\to V(\mathcal{Y}(\gamma))$ in Definition \ref{def_Y(gamma)} and the monomorphisms $\lambda_u$ for each $u\in E(\mathcal{Y}(\gamma))$ from the data of the complex of groups $L(\mathcal{Y}(\gamma))$ in Definition \ref{def_localCoG}. Equations denoted by $(\fryingpan)$ hold by Definition \ref{def_CoG} 3(a).
    
    For all $(c,d)\in E(\text{Lk}_\gamma)$,
    $$\text{Ad}(\Sigma(c,d))\psi_{h((c,d))}\Sigma_{i((c,d))}=\text{Ad}(e)\psi_d\text{id}_{G_{cd}}=\psi_d=\Sigma_{t(c,d)}\lambda_{(c,d)}$$

    For all $\gamma*c\in \{\gamma\}\times V(\text{Lk}_\gamma)$,
    \begin{gather*}
    \text{Ad}(\Sigma(\gamma*c))\psi_{h(\gamma*c)}\Sigma_{i(\gamma*c)}=\text{Ad}(e)\psi_c\text{id}_{G_c}=\psi_c=\Sigma_{t(\gamma*c)}\lambda_{\gamma*c}.
    \end{gather*}

    For all $b*c\in V(\text{Lk}^\gamma)\times V(\text{Lk}_\gamma)$,
    \begin{align*}
        \text{Ad}(\Sigma(b*c))\psi_{h(b*c)}\Sigma_{i(b*c)}&=\text{Ad}(g_{b,c})\psi_{bc}\text{id}_{G_c}
        \\
        &=\text{Ad}(g_{b,c})\psi_{bc}\\
        &\overset{(\fryingpan)}{=}\psi_b\psi_c\\
        &=\Sigma_{t(b*c)}\lambda_{b*c}.
    \end{align*}

    For all $b*\gamma\in V(\text{Lk}^\gamma)\times \{\gamma\}$,
    $$\text{Ad}(\Sigma(b*\gamma))\psi_{h(b*\gamma)}\Sigma_{i(b*\gamma)}=\text{Ad}(e)\psi_b\text{id}_{G_\gamma}=\psi_b=\Sigma_{t(b*\gamma)}\lambda_{b*\gamma}.$$

    For all $(a,b)\in E(\text{Lk}^\gamma)$,
    \begin{align*}\text{Ad}(\Sigma((a,b))\psi_{h((a,b))}\Sigma_{i((a,b))}&=\text{Ad}({g_{a,b}}^{-1})\psi_a\psi_b\\&\overset{(\fryingpan)}{=}\psi_{ab}\\&=\psi_{ab}\text{id}_{G_\gamma}\\&=\Sigma_{t((a,b))}\lambda_{(a,b)}.\end{align*}

    It remains to show that the data over pairs of morphisms in $\mathcal{Y}(\gamma)$ is compatible with $\Sigma$ as in Definition \ref{morphism_CoG} (2.2). Specifically, for all $(u,v)\in E^{(2)}(\mathcal{Y}(\gamma))$,
    $$\Sigma_{t(u)}(l_{u,v})\Sigma(uv)=\Sigma(u)\psi_{h(u)}(\Sigma(v))g_{h(u), h(v)}.$$
    Equations denoted by $(\pot)$ hold by the second condition for twisting elements in the definition of the complex of groups applied to $G(\mathcal{Y})$ (see Definition \ref{def_CoG} 3(b)).
    
    For all $((c_1,d_1),(c_2,d_2))\in E^{(2)}(\text{Lk}_\gamma)$ where $c_2=c_1d_1$,
    \begin{align*}
        &\Sigma_{t(c_1,d_1)}(l_{(c_1,d_1),(c_2,d_2)})\Sigma((c_1, d_1d_2))\\=&\text{id}_{G_{c_1}}(g_{d_1,d_2})e
        \\=&g_{d_1,d_2}\\=&\Sigma((c_1,d_1))\psi_{h((c_1,d_1))}(\Sigma(c_2,d_2))g_{h((c_1,d_1)),h((c_2,d_2))}.
    \end{align*}
    
    For all $(\gamma*c,(c,d))\in (\{\gamma\}\times V(\text{Lk}_\gamma)) \times E(\text{Lk}_\gamma)\subset E^{(2)}(\mathcal{Y}(\gamma))$,
    \begin{align*}\Sigma_{t(\gamma*c)}(l_{\gamma*c,(c,d)})\Sigma(\gamma*cd)=&\text{id}_{G_\gamma}(g_{c,d})e
    \\=&g_{c,d}
    \\=&
    \Sigma(\gamma*c)\psi_{h(\gamma*c)}(\Sigma((c,d)))g_{h(\gamma*c),h((c,d))}.
    \end{align*}

    For all $(b*c,(c,d))\in (V(\text{Lk}^\gamma)\times V(\text{Lk}_\gamma))\times E(\text{Lk}_\gamma)\subset E^{(2)}(\mathcal{Y}(\gamma))$,
    \begin{align*}
    \Sigma_{t(b*c)}
    (l_{b*c,(c,d)})
    \Sigma(b*cd)=& \psi_b
    (g_{c,d})
    g_{b,cd}
    \\\overset{(\pot)}{=}&g_{b,c}g_{bc,d}
    \\=&\Sigma(b*c)\psi_{h(b*c)}(\Sigma(c,d))g_{h(b*c),h((c,d))}.
    \end{align*}

    For all $(b*\gamma,\gamma*c)\in (V(\text{Lk}^\gamma)\times \{\gamma\})\times (\{\gamma\}\times V(\text{Lk}_\gamma))\subset E^{(2)}(\mathcal{Y}(\gamma))$,
    \begin{align*}
        \Sigma_{t(b*\gamma)}(l_{b*\gamma,\gamma*c})\Sigma(b*c)&=\psi_b(e)g_{b,c}
        \\&=g_{b,c}
        \\&=\Sigma(b*\gamma)\psi_{h(b*\gamma)}(\Sigma(\gamma*c))g_{h(b*\gamma),h(\gamma*c)}.
    \end{align*}

    For all $((a,b),b*c)\in  E(\text{Lk}^\gamma)\times (V(\text{Lk}^\gamma)\times V(\text{Lk}_\gamma)) \subset E^{(2)}(\mathcal{Y}(\gamma))$,
    \begin{align*}
        \Sigma_{t((a,b))}(l_{(a,b),b*c})\Sigma(ab*c)&=\psi_{ab}(e)g_{ab,c}
        \\&=g_{ab,c}
        \\&\overset{(\pot)}{=}{g_{a,b}}^{-1}\psi_a(g_{b,c})g_{a,bc}
        \\&=\Sigma((a,b))\psi_{h((a,b))}(\Sigma(b*c))g_{h((a,b)),h(b*c)}.
    \end{align*}

    For all $((a,b),b*\gamma)\in  E(\text{Lk}^\gamma)\times V(\text{Lk}^\gamma)\times\{\gamma\}\subset E^{(2)}(\mathcal{Y}(\gamma))$,
    \begin{align*}
        \Sigma_{t((a,b))}(l_{(a,b),b*\gamma})\Sigma(ab*\gamma)
        &=\psi_{ab}(e)e
        \\&=e
        \\&={g_{a,b}}^{-1}\psi_a(e)g_{a,b}
        \\&=\Sigma((a,b))\psi_{h((a,b))}(\Sigma(b*\gamma))g_{h((a,b)),h(b*\gamma)}.
    \end{align*}

    For all $((a_1,b_1),(a_2,b_2))\in E^{(2)}(\text{Lk}^\gamma) \subset E^{(2)}(\mathcal{Y}(\gamma))$, where $b_1=b_2a_2$,
    \begin{align*}
        &\Sigma_{t((a_1,b_1))}(l_{(a_1,b_1),(a_2,b_2)})\Sigma(a_1a_2, b_2)
        \\=&\psi_{a_1b_1}(e){g_{a_1a_2, b_2}}^{-1}
        \\=&{g_{a_1a_2, b_2}}^{-1}
        \\&\overset{(\pot)}{=}{g_{a_1,b_1}}^{-1}\psi_{a_1}({g_{a_2,b_2}}^{-1})g_{a_1,a_2}
        \\=&\Sigma((a_1,b_1))\psi_{h((a_1,b_1))}(\Sigma((a_2,b_2)))g_{h((a_1,b_1)),h((a_2,b_2))}.
    \end{align*}
\end{proof}

We are now ready to prove Corollary \ref{cor:thm_localCoG}, which we recall below:
\corthmlocalCoG*

\begin{proof}
By Proposition \ref{prop_pi1induced},
the morphism $\Sigma:L(\mathcal{Y}(\gamma))\to G(\mathcal{Y})$ from Construction \ref{construction_morphismfunctorial}
induces a homomorphism on the level of fundamental groups such that
$l\mapsto \Sigma_\gamma(l)$ for all $l\in L_\gamma$. Moreover, by \cite[Theorem III.$\mathcal{C}$.3.7]{bh}, there exists isomorphisms \begin{align*}\bar{\Psi}:\pi_1(G(\mathcal{Y}),\bar{T})&\longrightarrow \pi_1(G(\mathcal{Y}),\gamma),
\\
\Psi:\pi_1(L(\mathcal{Y}(\gamma)),T)&\longrightarrow \pi_1(L(\mathcal{Y}(\gamma)),\gamma)
\end{align*}
that restrict to the identity on the generators $L_\gamma$ and $G_\gamma$ respectively. Hence, the diagram
\begin{center}
    \begin{tikzcd}[row sep=huge,column sep=huge]
    L_\gamma \arrow[r,"(\iota_T)_\gamma"] \arrow[d,swap,"\Sigma_\gamma"] &
\pi_1(L(\mathcal{Y}(\gamma)),T) \arrow[r,"\Psi"]\arrow[d] &
\pi_1(L(\mathcal{Y}(\gamma)),\gamma) \arrow[d,"\Sigma_*"]
\\
G_\gamma \arrow[r,"(\iota_{\bar{T}})_\gamma"] & \pi_1(G(\mathcal{Y}),\bar{T}) \arrow[r,"\bar{\Psi}"] & \pi_1(G(\mathcal{Y}),\gamma)
  \end{tikzcd}
\end{center}
commutes. Observe that injectivity of $\Sigma_*$ implies injectivity of $\Sigma_\gamma\circ(\iota_{\bar{T}})_\gamma$. Thus, ${(\iota_{\bar{T}})}_\gamma$ is injective.

Now assume that $G(\mathcal{Y})$ is developable. The morphism $\Sigma:L(\mathcal{Y}(\gamma))\to G(\mathcal{Y})$ has the property that $\Sigma_\gamma=\text{id}_{L_\gamma}$. Thus, injectivity of $(\iota_{\bar{T}})_\gamma= (\iota_{\bar{T}})_\gamma\circ \Sigma_\gamma$ implies injectivity of $\Sigma_*$.
\end{proof}

\section{Immersions of complexes of groups}\label{section_immersion} 
In this section, we construct a new notion of an \textit{immersion} of complexes of groups and prove that \textit{locally isometric} immersions into non-positively curved complexes of groups mimic the behaviour of locally isometric immersions into non-positively curved metric spaces. This is the content of Theorem \ref{thm_immersion}.

\begin{definition}
\label{def_immersion}
    A morphism of complexes of groups $\phi:H(\mathcal{Y})\to G(\mathcal{X})$ over a morphism of scwols $f:\mathcal{Y}\to \mathcal{X}$ is an \textit{immersion} if it satisfies the following geometric and algebraic conditions:
    \begin{enumerate}
        \item\label{geom} (Geometric) The geometric realization of the map between local developments\footnote{We refer the reader to Proposition \ref{prop_morphismlocaldevelopments} for a description of this morphism.}
        $$|\Phi_\sigma|: \text{St}(\tilde{\sigma})\longrightarrow \text{St}(\widetilde{f(\sigma)})$$ is an embedding.
        \item\label{alg} (Algebraic) The local homomorphism $\phi_\sigma: H_\sigma\to G_{f(\sigma)}$ is injective.
    \end{enumerate}
\end{definition}

In \cite[Definition 3.1]{amar}, Martin defines an immersion of complexes of groups for the case where geometric realizations of underlying scwols are simplicial complexes. In particular, the geometric realization of the underlying morphism between scwols $|f|:|\mathcal{Y}|\to |\mathcal{X}|$ is a simplicial immersion.
We point out that our notion of an immersion introduced here is more general.

\begin{example}
    A covering of complexes of groups, defined in \cite[Definition III.$\mathcal{C}$.5.1]{bh}, is an immersion of complexes of groups.
\end{example}

\begin{example}
    The morphism $\Sigma:L(\mathcal{Y}(\gamma))\to G(\mathcal{Y})$ from Construction \ref{construction_morphismfunctorial} is an immersion of complexes of groups.
\end{example}

\begin{lemma}\label{lemma_coset_immersion}
Definition \ref{def_immersion} (1) is equivalent to the following condition: 

For each $j\in E(\mathcal{X})$ and each $\sigma=t(a)\in  V(\mathcal{Y})$ where $a\in f^{-1}(j)$, the map
        $$\bigsqcup_{\substack{a\in f^{-1}(j),
        \\t(a)=\sigma}} 
        \rightQ{H_\sigma}{\xi_a(H_{i(a)})} 
        \longrightarrow 
        \rightQ{G_{f(\sigma)}}{\psi_{j}(G_{i(j)})}$$ 
        induced by $h \mapsto \phi_{\sigma}(h)\phi(a)$
        is injective.
\end{lemma}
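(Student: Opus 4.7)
The plan is to translate the geometric embedding condition on $|\Phi_\sigma|$ into a vertex-level injectivity statement about the underlying scwol morphism $\Phi_\sigma$, and then to read off the coset map directly from the formula for $\Phi_\sigma$ on the upper link in Proposition \ref{prop_morphismlocaldevelopments}.

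First, I would verify that $\Phi_\sigma \colon \mathcal{Y}(\tilde{\sigma}) \to \mathcal{X}(\widetilde{f(\sigma)})$ is a non-degenerate morphism of scwols, using the explicit descriptions in Definitions \ref{def_Y(gamma)} and \ref{def_localdevelopment} together with the non-degeneracy of $f$. Once this is granted, vertex-level injectivity of $\Phi_\sigma$ lifts automatically to injectivity on each $E^{(k)}(\mathcal{Y}(\tilde{\sigma}))$: if $\Phi_\sigma(e_1)=\Phi_\sigma(e_2)$, functoriality gives $\Phi_\sigma(i(e_1))=\Phi_\sigma(i(e_2))$, so $i(e_1)=i(e_2)$ by object-injectivity, and then the non-degenerate bijection on outgoing edges at $i(e_1)$ forces $e_1=e_2$; a straightforward induction handles composable tuples. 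Since $|\Phi_\sigma|$ is then affine and injective on each closed simplex of $\text{St}(\tilde{\sigma})$, it is a topological embedding. The converse is trivial, as any embedding is injective on $0$-cells. Hence Definition \ref{def_immersion}(1) is equivalent to object-injectivity of $\Phi_\sigma$.

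Next, I would decompose the vertex set $V(\mathcal{Y}(\tilde{\sigma})) = V(\text{Lk}_{\tilde{\sigma}}) \sqcup \{\sigma\} \sqcup V(\text{Lk}^\sigma)$; the morphism $\Phi_\sigma$ respects this stratification, sending each summand into the corresponding summand of $V(\mathcal{X}(\widetilde{f(\sigma)}))$. The central vertex contributes nothing, and on $V(\text{Lk}^\sigma) = \{b \in E(\mathcal{Y}) : i(b) = \sigma\}$ the action is $b \mapsto f(b)$, which is injective by the non-degeneracy of $f$ at $\sigma$. So the only nontrivial component is $\Phi_\sigma|_{V(\text{Lk}_{\tilde{\sigma}})}$.

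Finally, by Proposition \ref{prop_morphismlocaldevelopments}, on the upper link this map is
\[
(h\,\xi_c(H_{i(c)}),\,c)\;\longmapsto\;\bigl(\phi_\sigma(h)\phi(c)\,\psi_{f(c)}(G_{i(f(c))}),\;f(c)\bigr),
\]
so two upper-link vertices share an image iff $f(c_1)=f(c_2)=:j$ and $\phi_\sigma(h_1)\phi(c_1) \equiv \phi_\sigma(h_2)\phi(c_2) \pmod{\psi_j(G_{i(j)})}$. Grouping by the fibre $f^{-1}(j)$, injectivity on $V(\text{Lk}_{\tilde{\sigma}})$ decomposes, for each $j \in E(\mathcal{X})$ with $t(j)=f(\sigma)$, into the injectivity of exactly the disjoint-union map in the lemma statement. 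I expect the main obstacle to be the bookkeeping needed to verify non-degeneracy of $\Phi_\sigma$ and to upgrade object-injectivity to a topological embedding; the coset decomposition itself is a routine unwinding of the formula in Proposition \ref{prop_morphismlocaldevelopments}.
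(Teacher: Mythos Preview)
Your approach is essentially the same as the paper's: both decompose $V(\mathcal{Y}(\tilde{\sigma}))$ into the central vertex, the lower link, and the upper link, dispatch the first two pieces via non-degeneracy of $f$, and identify injectivity on $V(\text{Lk}_{\tilde{\sigma}})$ with the coset condition through the formula of Proposition~\ref{prop_morphismlocaldevelopments}. You are somewhat more careful than the paper in making explicit why vertex-injectivity of the non-degenerate morphism $\Phi_\sigma$ upgrades to a topological embedding of $|\Phi_\sigma|$; the paper leaves this passage implicit.
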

\begin{proof}
We first show that the condition on cosets implies Definition \ref{def_immersion} (1). On $\sigma$, $\Phi_\sigma:\sigma\mapsto f(\sigma)$. This is clearly injective. By the definition of morphisms of scwols, $\Phi_\sigma$ is a bijection on the set of morphisms of the form $b*\sigma\in V(\text{Lk}^\sigma)\times \{\sigma\}$.
Injectivity of $\Phi_\sigma$ on objects $b\in V(\text{Lk}^\sigma)$ come from the non-degeneracy of $f$ and the injectivity of morphisms $b\in E(\mathcal{Y})$, where $i(b)=\sigma$. This implies, again by non-degeneracy, that $\Phi_\sigma$ is injective on $E(\text{Lk}^\sigma)$. Moreover, the first factor of $\Phi_\sigma$ restricted to $\text{Lk}_{\tilde{\sigma}}$ is defined by the coset map.
Hence, $\Phi_\sigma$ is injective. 

Now assume that the coset condition does not hold. As we have seen, by non-degeneracy, $\Phi_\sigma$ is always injective on $\sigma$, $V(\text{Lk}^\sigma)$, $V(\text{Lk}^\sigma)\times \{\sigma\}$ and $ E(\text{Lk}^\sigma)$. 
For some $j\in E(\mathcal{X})$, let $a_1, a_2\in f^{-1}(j)$ such that $t(a_1)=t(a_2)=\sigma\in V(\mathcal{Y})$, and let $h_i\xi_{a_i}(H_{i(a_i)})\in \rightQ{H_\sigma}{\xi_{a_i}(H_{i(a_i)})}$ for $i=1,2$ be distinct cosets such that
\begin{gather*}\label{eqn_failedcosetcondition}
    \phi_\sigma(h_1)\phi(a_1)\psi_j(G_{i(j)})=\phi_\sigma(h_2)\phi(a_2)\psi_j(G_{i(j)}).
\end{gather*}
This implies that there exists distinct $(h_1\xi_{a_1}(H_{i(a_1)}), a_1), (h_2\xi_{a_2}(H_{i(a_2)}), a_2)\in V(\text{Lk}_{\tilde{\sigma}})$ such that
\begin{align*}
    \Phi_\sigma((h_1\xi_{a_1}(H_{i(a_1)}), a_1))&=(\phi_\sigma(h_1)\phi(a_1)\psi_j(G_{i(j)}), j)\\&=(\phi_\sigma(h_2)\phi(a_2)\psi_j(G_{i(j)}), j)\\&=\Phi_\sigma((h_2\xi_{a_2}(H_{i(a_2)}), a_2)),
\end{align*}
so $\Phi_\sigma$ is not injective on $V(\text{Lk}_{\tilde{\sigma}})$.
We have thus proven that when the coset condition holds, $\Phi_\sigma$ must be injective on $V(\text{Lk}_{\tilde{\sigma}})$.
\end{proof}

We conclude this section with our last definition:
\begin{definition}
    A morphism $\phi:H(\mathcal{Y}) \to G(\mathcal{X})$ between two metrized complexes of groups is \textit{locally isometric} if for all $\sigma\in V(\mathcal{Y})$, the map on geometric realizations of local developments $$|\Phi_\sigma|: \text{St}(\tilde{\sigma})\longrightarrow \text{St}(\widetilde{f(\sigma)})$$
    has isometric image.
\end{definition}

We are now ready to prove Theorem \ref{thm_immersion}, which we recall below:
\thmimmersion*

\begin{proof}
Since $G(\mathcal{X})$ is non-positively curved, $|\mathcal{X}|$ is an $M_\kappa-$polyhedral complex for some $\kappa<0$. Thus, $|\mathcal{Y}|$ is metrized into an $M_\kappa-$polyhedral complex by $|f|$ (see Remark \ref{remark_metrized_morphism}). By our notion of an immersion, the map$$|\Phi_\sigma|: \text{st}(\tilde{\sigma})\to \text{st}(\widetilde{f(\sigma)})$$ is an isometric embedding for each $\sigma\in V(\mathcal{Y})$. 
Thus, each $\text{st}(\tilde{\sigma})$ inherits non-positive curvature from $\text{st}(\widetilde{f(\sigma)})$.
By Definition \ref{def_npcCoG}, $H(\mathcal{Y})$ is a non-positively curved complex of groups. By Theorem \ref{thm_NPC}, $H(\mathcal{Y})$ is developable.

Moreover, by Proposition \ref{prop_embeddingofstars}, there is an embedding of stars into developments $|D(\mathcal{Y},\iota_T)|$ and $|D(\mathcal{X},\iota_{T'})|$, given by $$\text{st}(\tilde{\sigma})\longmapsto\text{st}(\bar{\sigma})$$ and $$\text{st}(\widetilde{f(\sigma)})\longmapsto\text{st}(\bar{f(\sigma)})$$ respectively. Since $\phi$ is a locally isometric immersion of complexes of groups, $$|\Phi_\sigma|: \text{St}(\tilde{\sigma})\longrightarrow \text{St}(\widetilde{f(\sigma)})$$ is an isometric embedding. Hence, the map $|\Phi|: |D(\mathcal{Y},\iota_T)|\to |D(\mathcal{X},\iota_{T'})|$ is a locally isometric immersion between metric spaces. By \cite[Proposition II.4.14]{bh}, this lifts to an isometric embedding. Proposition \ref{thm_D(Y,iota_T)} tells us that $|D(\mathcal{Y},\iota_T)|$ and $ |D(\mathcal{X},\iota_{T'})|$ are simply connected. Thus, $|\Phi|$ is an isometric embedding.
    
It remains to show that the induced homomorphism $\phi_*$ on fundamental groups is injective.
If there exists an element of the kernel of $\phi_*$ that does not fix a point, then $|\Phi|$ fails to be an isometry. Hence, such an element must stabilize an object, i.e. it is contained in a local group of $H(\mathcal{Y})$. Since local homomorphisms of immersions are injective, such an element must be trivial.
\end{proof}
\subsection{Acknowledgements}
The author would like to thank her academic siblings Carl Tang, Darius Alizadeh and Ping Wan for peer editing, as well as Jean Pierre Mutanguha for giving many helpful comments on an earlier version. Last but not least, she would like to thank her advisor Daniel Groves for his endless patience and guidance through the research process.
\medskip
\bibliographystyle{amsalpha}
\bibliography{bib.bib}

\providecommand{\bysame}{\leavevmode\hbox to3em{\hrulefill}\thinspace}
\providecommand{\MR}{\relax\ifhmode\unskip\space\fi MR }
\providecommand{\MRhref}[2]{%
  \href{http://www.ams.org/mathscinet-getitem?mr=#1}{#2}
}
\providecommand{\href}[2]{#2}
\begin{thebibliography}{DMM25}

\bibitem[Ago13]{Agol}
Ian Agol, \emph{{The virtual Haken conjecture}}, Doc. Math. \textbf{18} (2013), 1045--1087, With an appendix by Ian Agol, Daniel Groves and Jason F. Manning.

\bibitem[BGZ24]{BregmanCorey2024Rgao}
Corey Bregman, Daniel Groves, and Kejia Zhu, \emph{{Relatively geometric actions of Kähler groups on CAT(0) cube complexes}}, Algebraic \& Geometric Topology \textbf{24} (2024), no.~7, 4127--4137.

\bibitem[BH99]{bh}
Martin~R. Bridson and Andr{\'e} Haefliger, \emph{Metric spaces of non-positive curvature}, Grundlehren der Mathematischen Wissenschaften [Fundamental Principles of Mathematical Sciences], vol. 319, Springer-Verlag, Berlin, 1999.

\bibitem[DMM25]{DahmaniFrançois2025Hhga}
Fran\c{c}ois Dahmani, Suraj~Krishna {M S}, and Jean~Pierre Mutanguha, \emph{Hyperbolic hyperbolic-by-cyclic groups are cubulable}, Geometry \& Topology \textbf{29} (2025), no.~1, 259--268.

\bibitem[EG22]{relgeom}
Eduard Einstein and Daniel Groves, \emph{{Relatively geometric actions on CAT(0) cube complexes}}, Journal of the London Mathematical Society \textbf{105} (2022), no.~1, 691--708.

\bibitem[EGN24]{EinsteinGrovesNg}
Eduard Einstein, Daniel Groves, and Thomas Ng, \emph{{Separation and relative quasiconvexity criteria for relatively geometric actions}}, Groups, Geometry, and Dynamics \textbf{18} (2024), no.~2, 649--676.

\bibitem[GM23]{Groves_2023}
Daniel Groves and Jason~F. Manning, \emph{{Hyperbolic groups acting improperly}}, Geometry \& Topology \textbf{27} (2023), no.~9, 3387--3460.

\bibitem[Gol24]{goldman20242dimensionalshephardgroups}
Katherine~M. Goldman, \emph{{2-dimensional Shephard groups}}, Preprint, \href{https://arxiv.org/abs/2411.15434}{arXiv:2411.15434}, 2024.

\bibitem[HW08]{HaglundFrédéric2008SCC}
Frédéric Haglund and Daniel~T. Wise, \emph{Special cube complexes}, Geometric and functional analysis \textbf{17} (2008), no.~5, 1551--1620.

\bibitem[LR25]{LafontRuffoni25}
Jean-François Lafont and Lorenzo Ruffoni, \emph{Relative cubulation of relative strict hyperbolization}, Journal of the London Mathematical Society \textbf{111} (2025), no.~4, e70093.

\bibitem[Mar17]{amar}
Alexandre Martin, \emph{{Complexes of groups and geometric small cancellation over graphs of groups}}, Bulletin de la Soci{\'e}t{\'e} Math{\'e}matique de France \textbf{145} (2017), no.~2, 193--223.

\end{thebibliography}

\end{document}